\renewcommand*{\backref}[1]{}
\renewcommand*{\backrefalt}[4]{
  \ifcase #1 
  [No citations.]
  \or [#2]
  \else [#2]
  \fi }
\let\originalleft\left
\let\originalright\right
\renewcommand{\left}{\mathopen{}\mathclose\bgroup\originalleft}
\renewcommand{\right}{\aftergroup\egroup\originalright}
\newcommand{\thsup}{{\rm th}}
\newcommand{\calC}{\mathcal{C}}
\newcommand{\calD}{\mathcal{D}}
\newcommand{\calG}{\mathcal{G}}
\newcommand{\calT}{\mathcal{T}}
\newcommand{\CC}{\mathbb{C}}
\newcommand{\HH}{\mathbb{H}}
\newcommand{\II}{\mathbb{I}}
\newcommand{\RR}{\mathbb{R}}
\newcommand{\ZZ}{\mathbb{Z}}
\newcommand{\Id}{\operatorname{Id}}
\newcommand{\st}{\mathbin{\mid}} 
\newcommand{\from}{\colon} 
\newcommand{\isom}{\cong} 
\newcommand{\bdy}{\partial} 
\newcommand{\subgp}[1]{{\langle #1 \rangle}}
\newcommand{\Isom}{\operatorname{Isom}} 
\newcommand{\SO}{\operatorname{SO}} 
\numberwithin{equation}{section} 
\let\c@figure\c@equation
\numberwithin{figure}{section} 
\theoremstyle{plain}
\newtheorem{theorem}[equation]{Theorem}
\newtheorem{corollary}[equation]{Corollary}
\newtheorem{lemma}[equation]{Lemma}
\newtheorem{proposition}[equation]{Proposition}
\theoremstyle{definition}
\newtheorem{definition}[equation]{Definition}
\newtheorem{exercise}[equation]{Exercise}
\newtheorem*{question*}{Question}
\newtheorem*{answer*}{Answer}
\newtheorem*{application*}{Application}
\theoremstyle{remark}
\newtheorem{remark}[equation]{Remark}
\newtheorem*{remark*}{Remark}
\newtheorem*{case*}{Case}
\newtheorem*{step*}{Step}
\newtheorem*{claim*}{Claim}
\newcommand{\refsec}[1]{Section~\ref{Sec:#1}}
\newcommand{\refapp}[1]{Appendix~\ref{App:#1}}
\newcommand{\refcor}[1]{Corollary~\ref{Cor:#1}}
\newcommand{\reflem}[1]{Lemma~\ref{Lem:#1}}
\newcommand{\refprop}[1]{Proposition~\ref{Prop:#1}}
\newcommand{\refrem}[1]{Remark~\ref{Rem:#1}}
\newcommand{\reffig}[1]{Figure~\ref{Fig:#1}}
\newcommand{\refdef}[1]{Definition~\ref{Def:#1}}
\newcommand{\refeqn}[1]{Equation~\ref{Eqn:#1}}
\newcommand{\fakeenv}{} 
 \renewcommand{\fakeenv}{#2} 
 \theoremstyle{plain} 
 \newtheorem*{\fakeenv}{#1~\ref{#2}} 
\newenvironment{restated}[2]  
{ 
 \renewcommand{\fakeenv}{#2} 
 \theoremstyle{definition} 
 \newtheorem*{\fakeenv}{#1~\ref{#2}} 
 \begin{\fakeenv}
}
{
 \end{\fakeenv}
}
\newcommand{\Sym}{\operatorname{Sym}}
\newcommand{\Cong}[1]{{\overline{#1}}}
\newcommand{\basis}[1]{{\langle {#1} \rangle}}
\newcommand{\centerpt}{\operatorname{center}}
\newcommand{\eq}{{\operatorname{eq}}}
\newcommand{\inn}{{\operatorname{in}}}
\newcommand{\out}{{\operatorname{out}}}
\newcommand{\hull}{\operatorname{hull}}
\newcommand{\Vor}{{\operatorname{Vor}}}
\newcommand{\Sphere}{{\operatorname{Sph}}}
\begin{document}

\title[$120$--cell]{Puzzling the $120$--cell}

\author[Schleimer]{Saul Schleimer}
\address{\hskip-\parindent
        Department of Mathematics\\
        University of Warwick\\
        Coventry, UK}
\email{s.schleimer@warwick.ac.uk}

\author[Segerman]{Henry Segerman}
\address{\hskip-\parindent
        Department of Mathematics \\
        Oklahoma State University \\
        Stillwater, OK USA}
\email{segerman@math.okstate.edu}

\thanks{This work is in the public domain.}

\date{\today}

\begin{abstract}
We introduce \emph{Quintessence}: a family of burr puzzles based on
the geometry and combinatorics of the $120$--cell.  We discuss the
regular polytopes, their symmetries, the dodecahedron as an important
special case, the three-sphere, and the quaternions.  We then
construct the $120$--cell, giving an illustrated survey of its
geometry and combinatorics.  This done, we describe the pieces out of
which Quintessence is made.  The design of our puzzle pieces uses a
drawing technique of Leonardo da Vinci; the paper ends with a catalogue
of new puzzles.
\end{abstract}


\maketitle

\begin{figure}[htbp]
\centering 
\includegraphics[width = 0.75\textwidth]{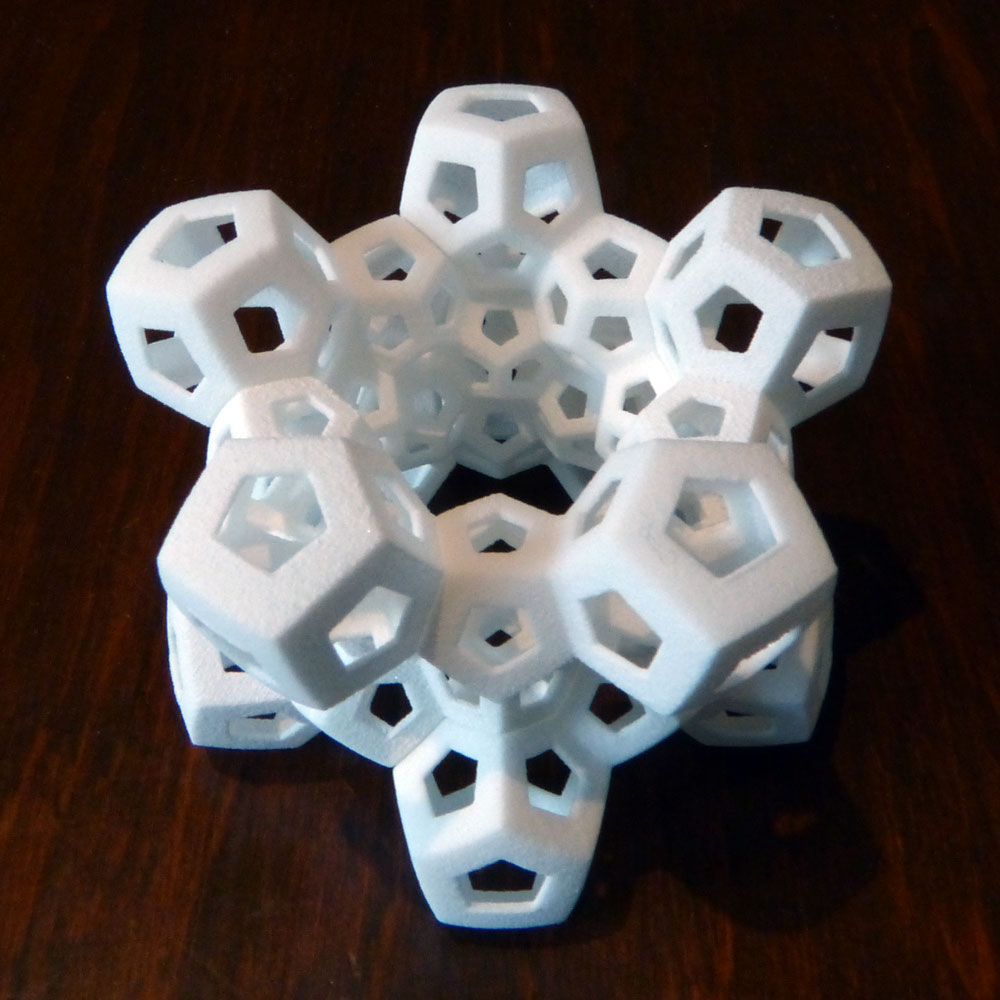}
\caption{The Dc30 Ring, one of the simpler puzzles in Quintessence.}
\label{Fig:Ring}
\end{figure}

\section{Introduction}
\label{Sec:Intro}

\begin{wrapfigure}[10]{r}{0.42\textwidth}
\vspace{-10pt}
\centering 
\subfloat[]
{
\includegraphics[width=0.16\textwidth]{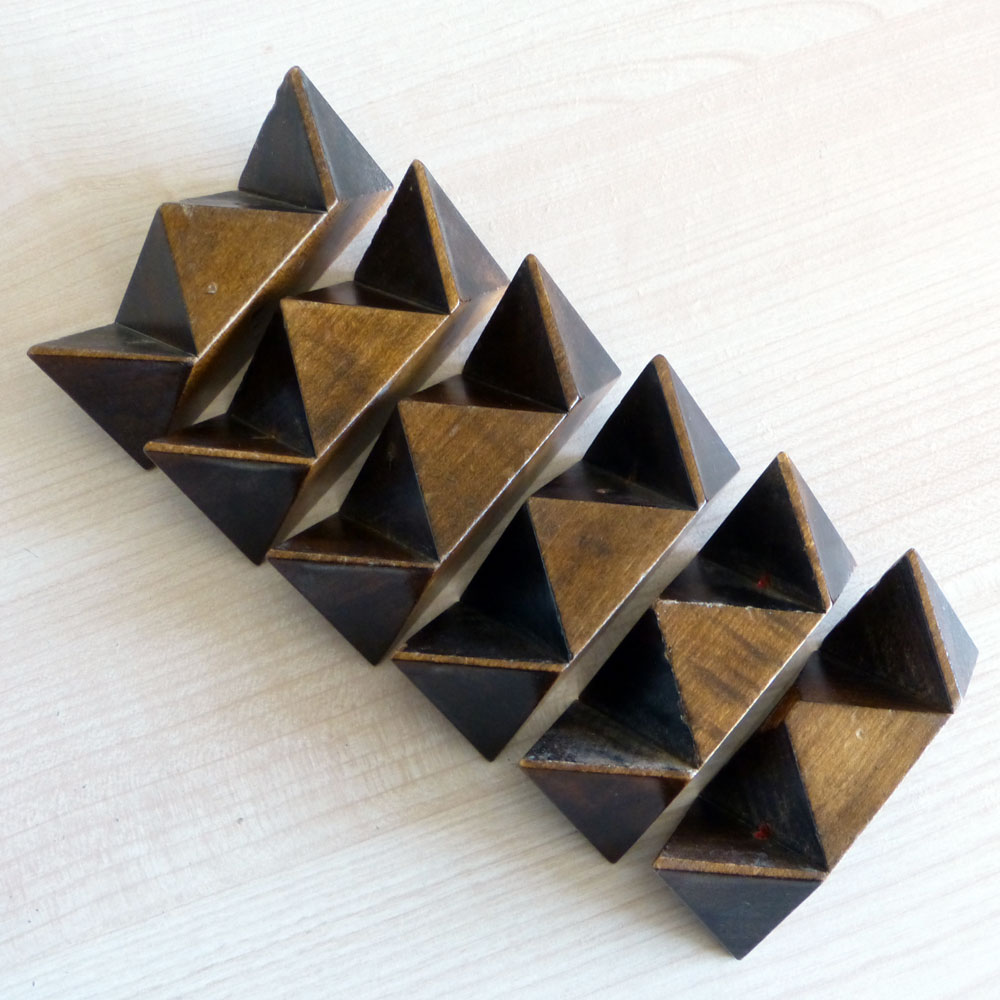}
\label{Fig:StarBurrApart}
} \quad 
\subfloat[]
{
\includegraphics[width=0.16\textwidth]{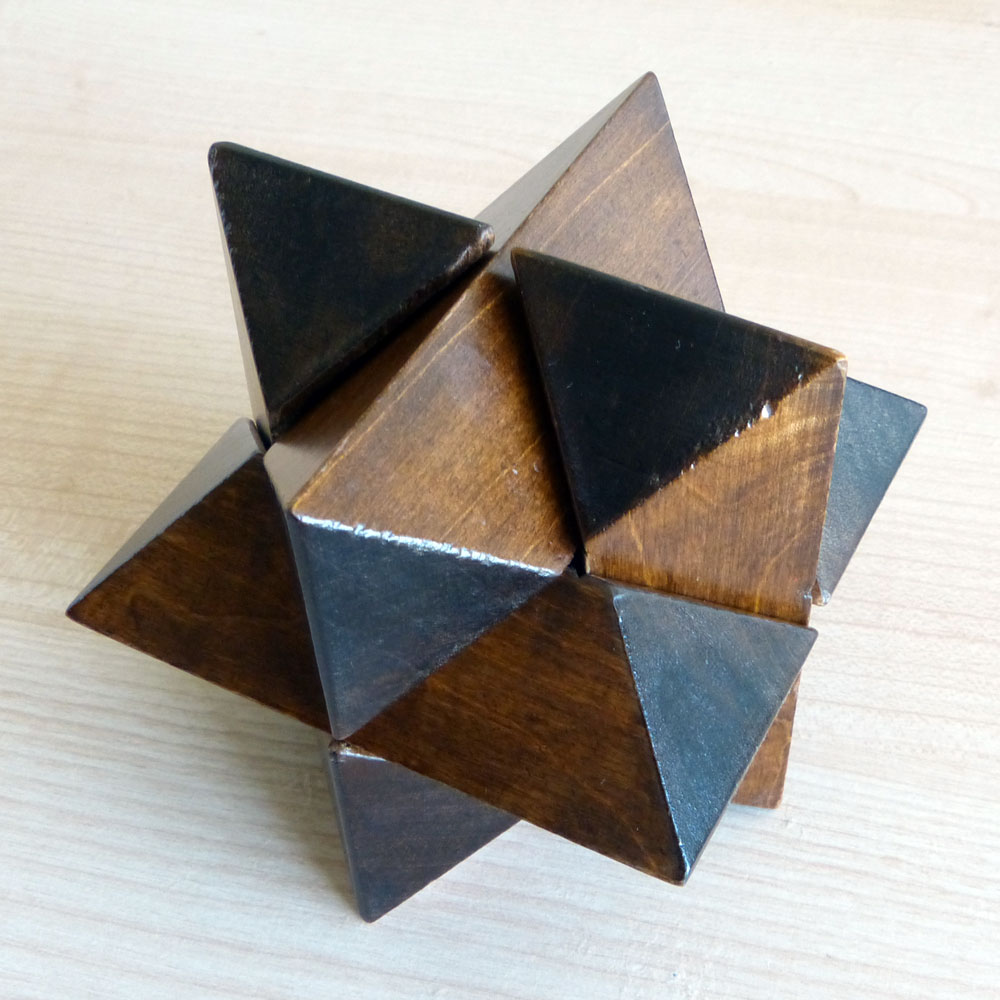}
\label{Fig:StarBurrTogether}
}
\caption{The star burr.} 
\label{Fig:StarBurr}
\end{wrapfigure}

A \emph{burr puzzle} is a collection of notched wooden
sticks~\cite[page~xi]{Coffin07} that fit together to form a highly
symmetric design, often based on one of the Platonic solids.  The
assembled puzzle may have zero, one, or more internal voids; it may
also have multiple solutions.  Ideally, no force is required.
Of course, a puzzle may violate these rules in various ways and still
be called a burr.

The best known, and certainly largest, family of burr puzzles are
collectively called the $6$--piece burrs~\cite{Cutler94}.
Another well-known burr, the star burr, is more closely related to our
work.
Unlike the $6$-piece burrs, the six sticks of the star burr are all
identical, as shown in~\reffig{StarBurrApart}.  The solution is unique
and, once solved, the star burr has no internal voids.  The solved
puzzle is a copy of the first stellation of the rhombic dodecahedron;
see~\reffig{StarBurrTogether}.


\begin{wrapfigure}[12]{l}{0.45\textwidth}
\vspace{5pt}
\centering 
\labellist
\footnotesize
\pinlabel \textcolor{white}{spine} at 100 504
\pinlabel \textcolor{white}{inner six} at 310 580
\pinlabel \textcolor{white}{outer six} at 570 573
\pinlabel \textcolor{white}{inner four} at 180 60
\pinlabel \textcolor{white}{outer four} at 500 60
\pinlabel \textcolor{white}{equator} at 800 50
\endlabellist
\includegraphics[width = 0.4\textwidth]{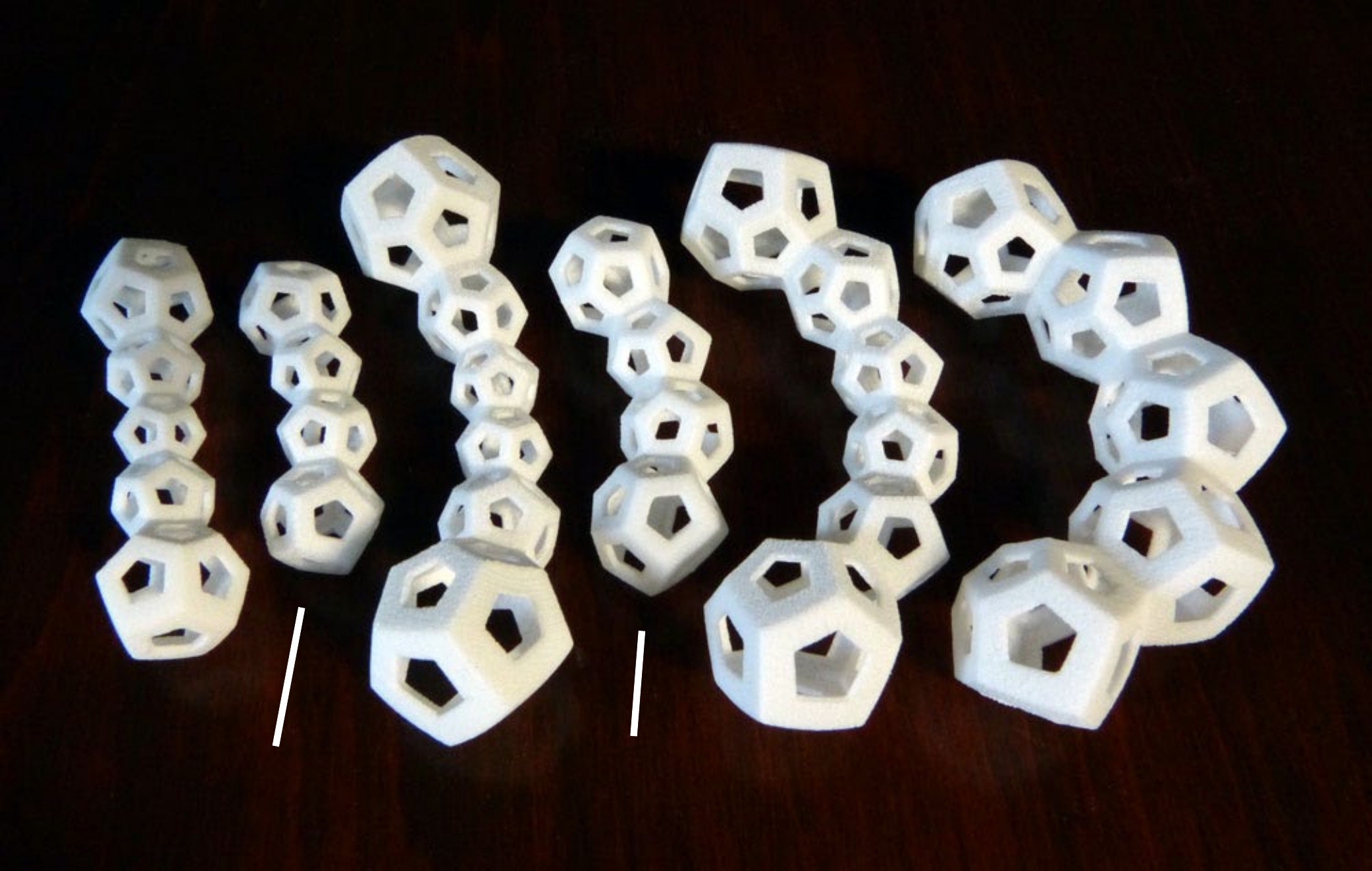}
\caption{The six rib types.} 
\label{Fig:Ribs}
\end{wrapfigure}


The goal of this paper is to describe \emph{Quintessence}: a new
family of burr puzzles based on the $120$--cell, a regular
four-dimensional polytope.  The puzzles are built from collections of
six kinds of sticks, shown in \reffig{Ribs}; we call these \emph{ribs}
as they are gently curving chains of distorted dodecahedra.

In \refsec{Poly} we review the basic concepts of regular polytopes in
low dimensions; in \refsec{Dodeca} we construct the dodecahedron and
derive several trigonometric facts.  In \refsec{Quatern} we briefly
review the three-sphere, the quaternions and stereographic projection.
As discussed in our previous paper~\cite{SchleimerSegerman12},
stereographic projection allows us to translate objects from the
three-sphere into our usual three-dimensional space.

Using the binary dodecahedral group, as it lies inside of the
quaternions, in \refsec{120} we construct the $120$--cell.  In
\refsec{Comb} we investigate the combinatorics of the $120$--cell,
focusing on how it decomposes into spheres and rings of dodecahedra.
In \refsec{Ribs} we lay out our choice of ribs, as influenced by the
cell-centred stereographic projection.  We use this to give a basic
combinatorial restriction on the possible burr puzzles in
Quintessence.  \refsec{Leonardo} briefly recalls Leonardo da Vinci's
technique for drawing polytopes; we use his method and stereographic
projection to produce our puzzle pieces.  One of the completed
puzzles, the Dc30 Ring, serves as our frontispiece (\reffig{Ring}).
We end with \refapp{Catalog}, a catalogue of some of the burr puzzles in
Quintessence.  The connection between the classic burrs and ours is
left as a final exercise for the intrigued reader.



\subsection*{Acknowledgements} 
We thank Robert Tang and Stuart Young for their insights into the
combinatorics of the $120$--cell.

\section{Polytopes}
\label{Sec:Poly}

We refer to~\cite{Ziegler95} for an in-depth discussion of polytopes.
Here we concentrate on the ideas needed to understand regular
polytopes.

\begin{wrapfigure}[10]{r}{0.25\textwidth}
\vspace{-25pt}
\centering 
\includegraphics[width=0.20\textwidth]{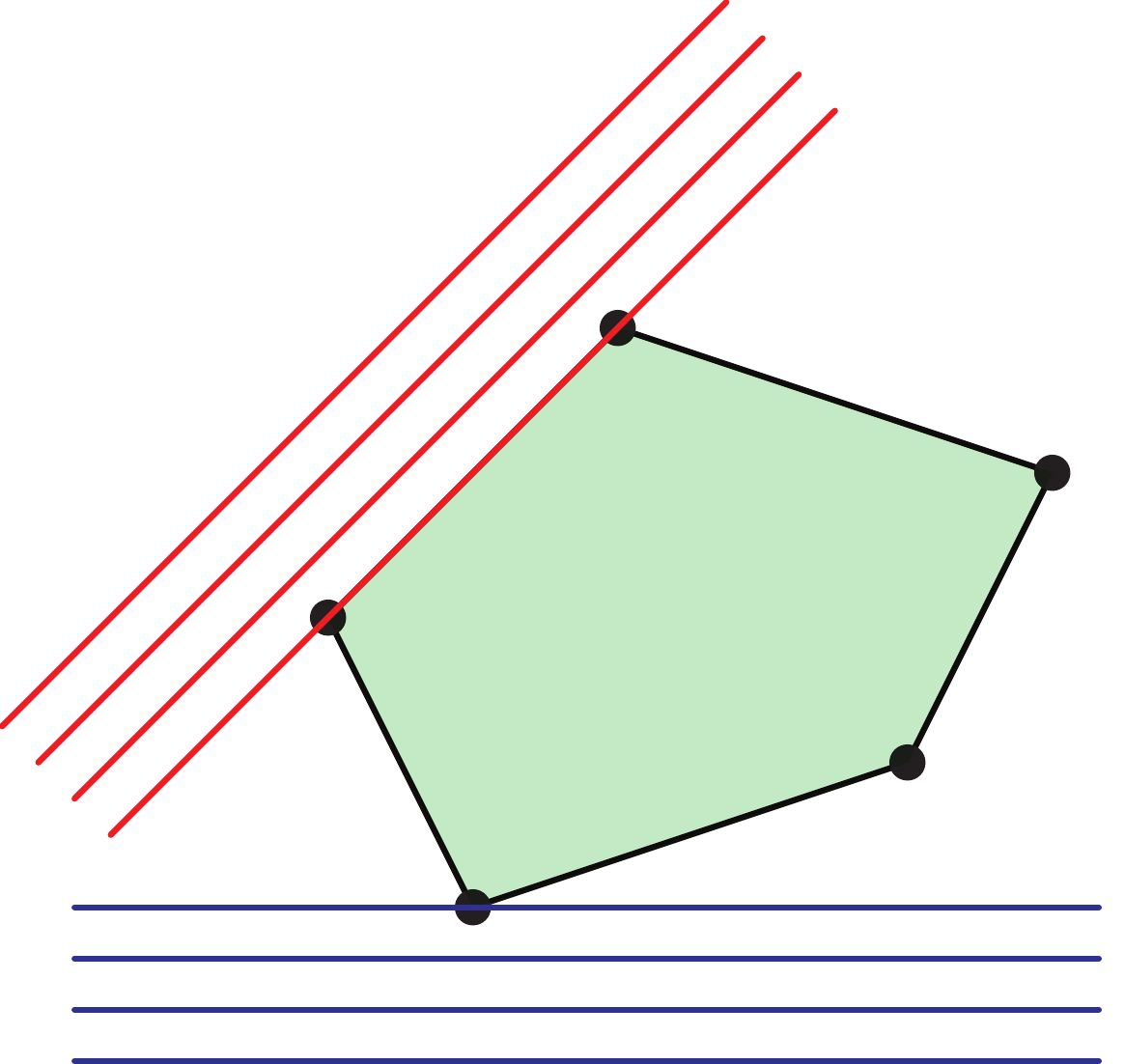}
\caption{Convex hull of five points in the plane.} 
\label{Fig:Convex}
\end{wrapfigure}

\subsection{Convexity}

We use $\RR^n$ to denote the usual $n$--dimensional space; we use
$S^{n-1}$ to denote the sphere of radius one in $\RR^n$.  A set $C
\subset \RR^n$ is \emph{convex} if for any points $x$ and $y$ in $C$
the line segment $[x,y]$ is also contained in $C$.  As a consequence
$C$ cannot have any internal voids.  Convexity also rules out dents on
the boundary of $C$.

For any subset $V \subset \RR^n$ the \emph{convex hull} of $V$,
denoted by $\hull(V)$, is the smallest convex set containing $V$.  For
example, the convex hull of two distinct points is a line segment.
The convex hull of three points, not all in a line, is a triangle.  In
general, if $V$ is a collection of $k+1$ points, not all in a
$k$--dimensional hyperplane, then $\hull(V)$ is called a
\emph{$k$--simplex}.  


\begin{wrapfigure}[20]{l}{0.25\textwidth}
\vspace{5pt}
\centering 
\includegraphics[width=0.20\textwidth]{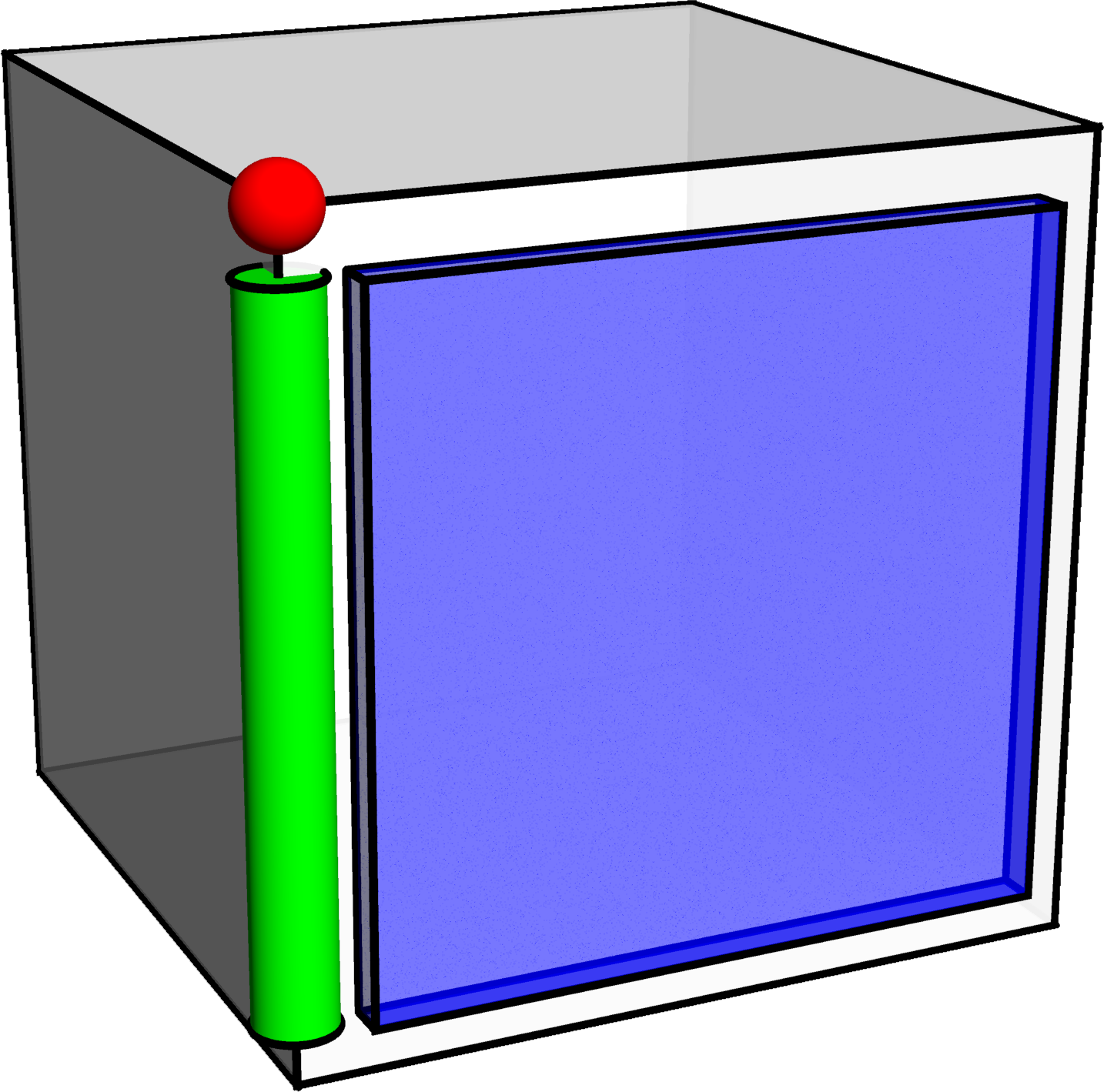} \\
\mbox{}\\
\includegraphics[width=0.20\textwidth]{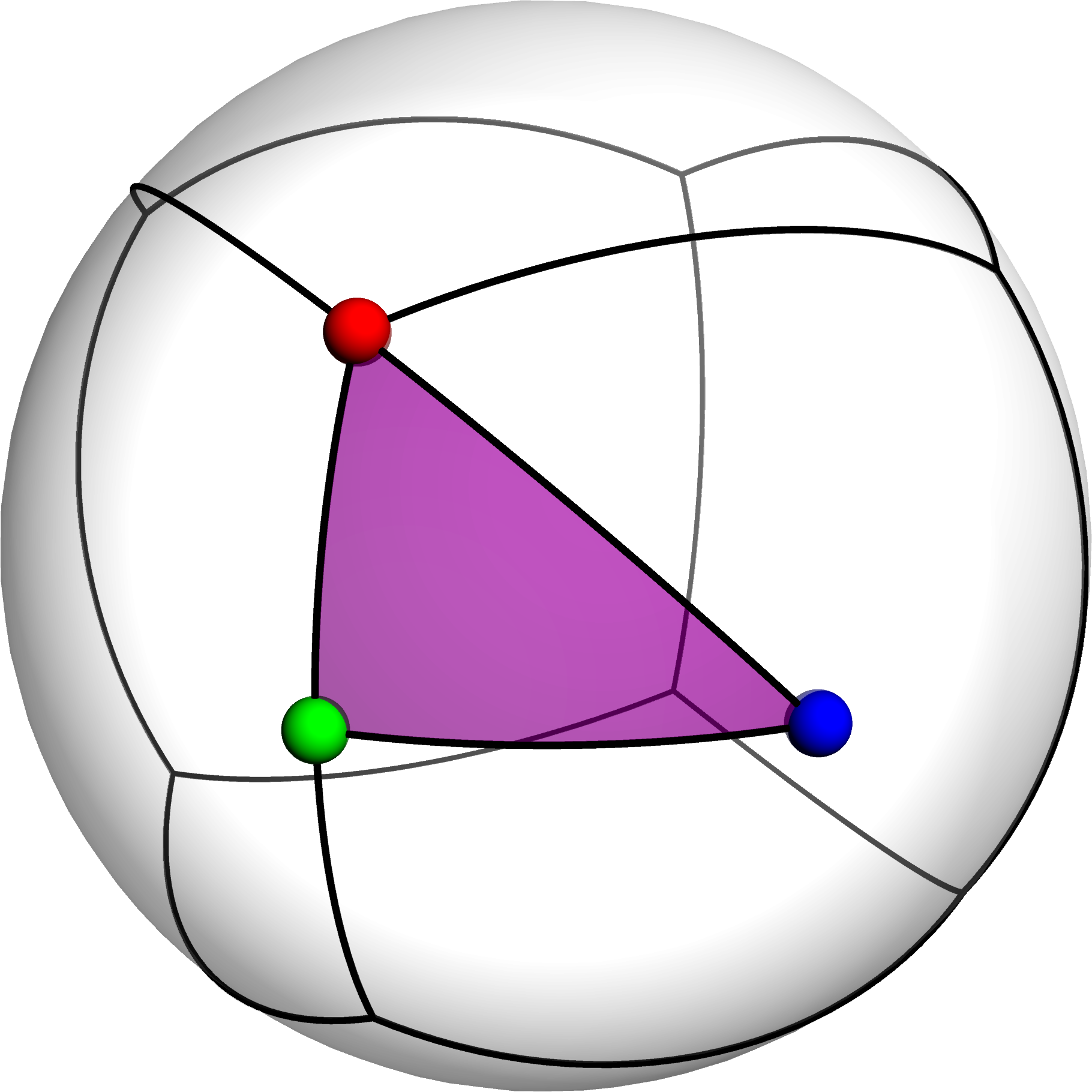}
\caption{Flag for cube and corresponding spherical flag triangle.} 
\label{Fig:Flag}
\end{wrapfigure}

Here we will always restrict $V \subset \RR^n$ to be finite; thus $P =
\hull(V)$ is a \emph{polytope}.
The dimension of $P$ is the dimension of the smallest affine subspace
$H \subset \RR^n$ containing $P$.  We call $H$ the \emph{affine span}
of $P$.  In the examples above the interval has dimension one, the
triangle two, and the tetrahedron three.  


Choose $K$, a hyperplane in $H$, that is disjoint from the polytope
$P$.  We move $K$, always staying parallel to itself, towards $P$
until they first touch.  See \reffig{Convex}.  The resulting
intersection $Q = P \cap K$ is again a polytope; we call $Q$ a
\emph{face} of $P$.

The \emph{vertices} of $P$ are exactly the zero-dimensional faces.  If
the dimension of $Q$ is exactly one less than that of $P$ then we call
$Q$ a \emph{facet} of $P$.  For example, any tetrahedron has four
facets, all triangles; this gives the tetrahedron its name.  We define
$\bdy P$, the \emph{boundary} of $P$, to be the union of the facets of
$P$.

\subsection{Regular polytopes}

Suppose that $P$ is a $k$--dimensional polytope, with affine span $H$.
A collection of faces $Q_0 \subset Q_1 \subset \ldots \subset Q_{k-1}
\subset Q_k = P$ is called a \emph{flag} of $P$ if $Q_\ell$ has
dimension $\ell$.  See \reffig{Flag} (top) for a picture of one of the
$48$ flags of the cube.

Let $\Sym(P)$ be the group of rigid motions (and reflections) of $H$
that preserve $P$ setwise.  We call elements of $\Sym(P)$ the
\emph{symmetries} of $P$.

\begin{definition}
\label{Def:Regular}
A polytope $P$ is \emph{regular} if for any pair of its flags, $F$ and
$G$, there is a symmetry $\phi \in \Sym(P)$ with $\phi(F) = G$.
\end{definition}

It follows that all facets of a regular polytope are congruent and
are themselves regular.
As an example, consider the octahedron $O \subset \RR^3$: the convex
hull of the six points
\[
(\pm 1, 0, 0), \; (0, \pm 1, 0), \; (0, 0, \pm 1).
\]
The octahedron, like the cube,  has 
$48$ flags.  Any one can be sent to any other by reflections in the
coordinate planes and rotations about the coordinate axes.  Note that
the facets of $O$ are all congruent equilateral triangles, so are
themselves regular two-polytopes.  


So, suppose $P$ is regular.  Define $p = \centerpt(P)$ to be the
average of the vertices of $P$.
Since $\Sym(P)$ permutes the vertices of $P$, it fixes $p$.  Since
$\Sym(P)$ sends any flag to any other, the same is true of the vertices. So the vertices are all the same
distance from $p$.  Thus $p$ is a \emph{circumcentre}: $P$ is
circumscribed by the sphere $S_P$ centred at $p$ and running through
the vertices of $P$.  If we project $\bdy P$ from $p$ outwards to
$S_P$ we obtain a 
spherical tiling $\calT_P$.


Conversely, when we are constructing an $n$--dimensional regular
polytope $P$ our first move is to build a spherical tiling $\calT_P$
on $S^{n-1}$.  The tiling $\calT_P$ is often more tractable, and is
certainly easier to visualise. 

\begin{definition}
\label{Def:FlagPoly}
Suppose that $P$ is regular and $F = \{Q_i\}$ is a flag in $P$.  Then
the \emph{flag polytope} $Q_F$ is the convex hull of the centres of
the $Q_i$.  The \emph{spherical flag polytope} is the radial
projection of $Q_F - p$ to $S_P$.  See \reffig{Flag} (bottom).
\end{definition}

If $P$ is regular, then all of its spherical flag polytopes are
congruent.

\begin{definition}
\label{Def:Dual}
Suppose $P$ is a regular polytope.  We form the \emph{dual} polytope
$P'$ by taking the convex hull of the centres of the facets of $P$ and
then rescaling so all vertices of $P'$ lie on $S_P$.
\end{definition}

For example, the cube and octahedron are dual; this explains why they
have the same number of flags.


\subsection{Constructions}

There are four infinite families of regular polytopes; each family is
associated with a topological operation.
We begin in dimension two, with the regular polygons.  Let $\rho_n
\from \CC \to \CC$ be the map $\rho_n(\omega) = \omega^n$.  Restricted
to $S^1$ this becomes an $n$--fold covering map of the circle.

\begin{definition}
\label{Def:Polygon}
The \emph{regular $n$--gon} $P_n$ is the convex hull of
$\rho_n^{-1}(1)$: that is, of the $n^\thsup$ roots of unity.
\end{definition}

\begin{figure}[htbp]
\centering 
\includegraphics[width = 0.9\textwidth]{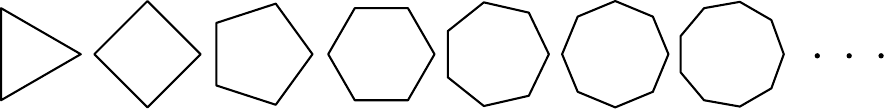}
\caption{The regular polygons.} 
\label{Fig:Polygons}
\end{figure}

The exterior angle at the vertex of $P_n$ is $\pi(1 - \frac{2}{n})$;
also $P_n$ is self-dual.  Already in this first example we see an
important principle: a regular polytope $P$ should be understood via
its circumscribing sphere, here the unit circle.

We now turn to the three families that exist in all dimensions:
simplices, cubes, and cross-polytopes.  Each family is defined in
terms of convex hulls and also given by its topological operation.  We
take $e^k_i = (0, \ldots, 0, 1, 0, \ldots, 0) \in \RR^k$ to be the
point with a single $1$ in the $i^\thsup$ coordinate and all other
coordinates $0$.


\begin{definition}
\label{Def:Simplex}
The \emph{$k$--simplex} is the convex hull of the $k+1$ points $\{ e_i
\}$ in $\RR^{k+1}$.  Thus it is a (right) cone
with base the $(k-1)$--simplex and with height
$\sqrt{1+k^{-1}}$.
\end{definition}


\begin{definition}
\label{Def:Cube}
The \emph{$k$--cube} is the convex hull of the $2^k$ points $\{ \pm
e_1, \pm e_2, \ldots \pm e_k \}$ in $\RR^k$.  Thus it is a product between
the $(k-1)$--cube and the unit interval.
\end{definition}

\begin{definition}
\label{Def:CrossPoly}
The \emph{$k$--cross-polytope} is the convex hull of the $2k$ points
$\{ \pm e_i \}$, taken in $\RR^k$.  Thus it is a suspension with base
the $(k-1)$--cross-polytope and of height one.  Here a
\emph{suspension} is a double (right) cone to points lying
symmetrically above and below the centre of the base.
\end{definition}

\begin{figure}[htbp]
\centering 
\includegraphics[width = 0.7\textwidth]{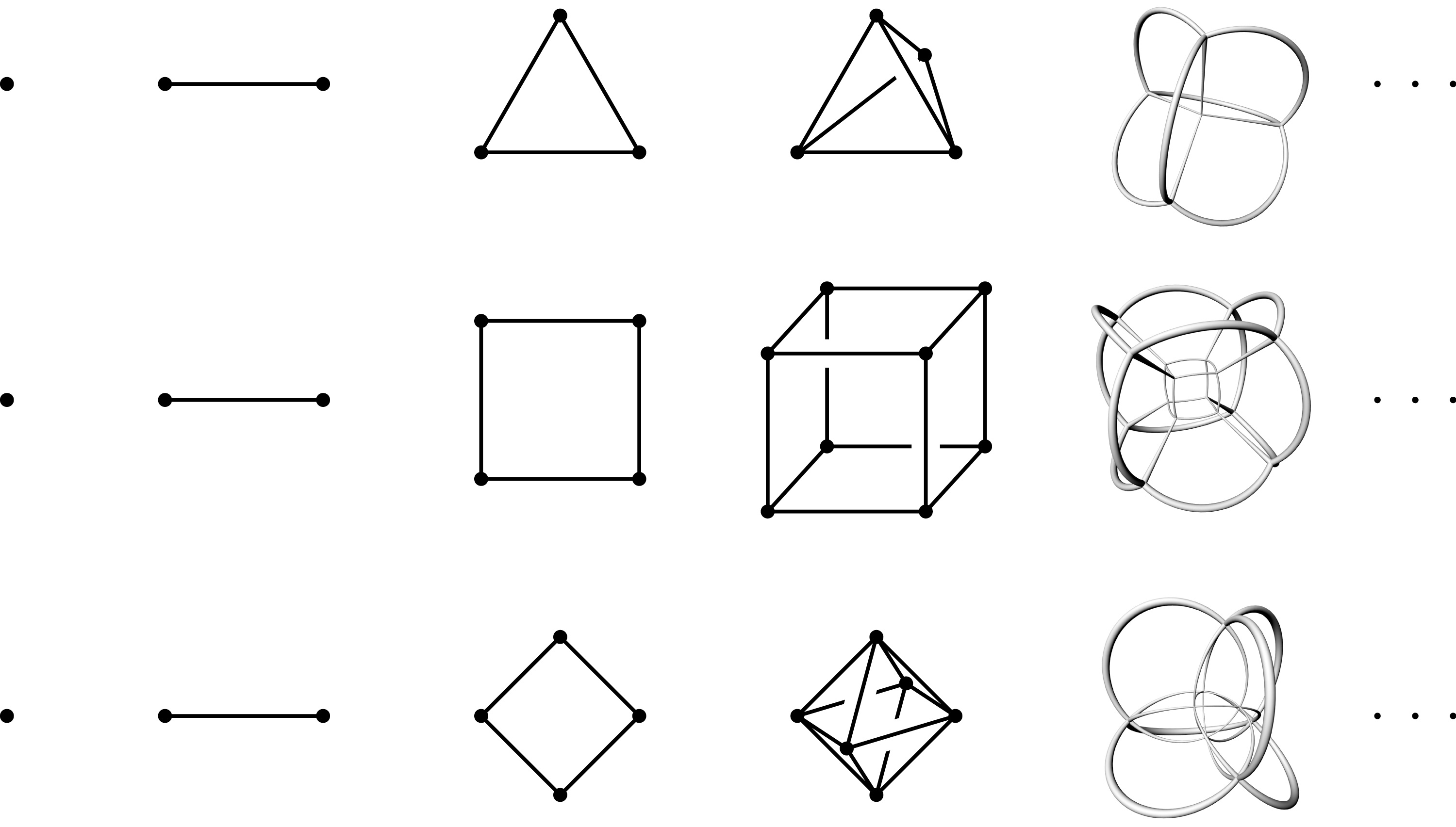}
\caption{The first five simplices, cubes, and cross-polytopes.} 
\label{Fig:ThreeFamilies}
\end{figure}

The first several examples of each are shown in
\reffig{ThreeFamilies}.  The one-dimensional versions are all
intervals.  In dimension two they are the triangle, square, and
diamond, respectively.  In dimension three the simplex is the
tetrahedron and the cross-polytope is the octahedron.  The fifth
column shows the \emph{stereographic projections} of the spherical
tilings for the four-dimensional members of each family.  These cannot
be drawn in three-dimensional space so we instead radially project
their boundaries to $S^3$ and then stereographically project to
$\RR^3$.  This technique was the subject of our paper
\cite{SchleimerSegerman12} and we use it again here.  For the
convenience of the reader, we repeat the definition of stereographic
projection in \refsec{StereoProj}.

We now collect several useful statements which we will not prove here.
Instead see~\cite[page~143]{Toth64}.

\begin{lemma}
\label{Lem:Poly}
The simplex, cube, and cross-polytope are regular.  The cube and the
cross-polytope are dual; the simplex is self-dual.  In dimensions
three and higher, these three polytopes are distinct. \qed
\end{lemma}

\begin{theorem}
\label{Thm:Poly}
There are exactly five regular polytopes not in one of the four
families.  These are, in dimension three, the dodecahedron and
icosahedron (dual) and, in dimension four, the $24$--cell (self-dual),
and the $120$--cell and $600$--cell (dual). \qed
\end{theorem}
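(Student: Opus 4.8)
The plan is to argue by induction on the dimension $n$, organizing regular polytopes by their \emph{Schl\"afli symbols} and exploiting the principle, already visible in \refdef{Reg}, that the local structure of a regular polytope at a vertex is again a regular polytope of one lower dimension. Write $\{p_1,\dots,p_{n-1}\}$ for the symbol of a regular $n$--polytope $P$, where each $p_\ell\ge 3$ is an integer: $\{p_1\}$ records the regular polygonal $2$--faces, and in general $p_\ell$ is the number of $\ell$--faces meeting along a fixed $(\ell{-}1)$--face inside a facet. Two structural facts do the work. First, since $\Sym(P)$ is flag-transitive, so is the stabilizer of a facet on the flags of that facet, and the stabilizer of a vertex on the flags of its vertex figure; hence each facet is a regular $(n-1)$--polytope with symbol $\{p_1,\dots,p_{n-2}\}$ and each vertex figure is a regular $(n-1)$--polytope with symbol $\{p_2,\dots,p_{n-1}\}$. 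Second --- the rigidity we really use --- a convex regular polytope is determined up to similarity by its symbol; equivalently its characteristic simplex on the circumscribed sphere $S_P$, the spherical flag simplex of \refdef{FlagPoly}, is the spherical $(n-1)$--simplex all of whose dihedral angles are right angles except for a chain of $n-1$ of them, which equal $\pi/p_1,\dots,\pi/p_{n-1}$.

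The engine of the classification is a single convexity inequality at a face of codimension two. Around an $(n-2)$--face of a convex polytope the facets fit together like the pages of a book, and the angles between consecutive facets --- the dihedral angles $\delta$ of the facet polytope, measured between two of its own $(n-2)$--faces --- must sum to \emph{strictly} less than $2\pi$. For $P=\{p_1,\dots,p_{n-1}\}$ this reads
\[
p_{n-1}\cdot\delta\big(\{p_1,\dots,p_{n-2}\}\big)\;<\;2\pi .
\]
Equality corresponds to a regular \emph{Euclidean honeycomb} and a strict excess to a hyperbolic one, so separating these cases is exactly what pins the inequality to being strict --- a useful sanity check. The dihedral angles obey a recursion in the dimension; for our purposes one only needs the values $\arccos(1/k)$ for the $k$--simplex, $\pi/2$ for every cube, $\arccos(-\tfrac13)$ for the octahedron, a little under $2\pi/3$ for the dodecahedron, a little over $2\pi/3$ for the icosahedron, exactly $2\pi/3$ for the $16$--cell and the $24$--cell, and $4\pi/5$ for the $120$--cell. (An equivalent, slicker device is Schl\"afli's criterion: $\{p_1,\dots,p_{n-1}\}$ is admissible precisely when the tridiagonal ``Gram'' matrix with diagonal $1$ and off-diagonal entries $-\cos(\pi/p_\ell)$ is positive definite, i.e.\ the connected positive-definite Coxeter diagrams that happen to be paths.)

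Now I would run the induction. In dimension $2$ every regular polygon on $n$ vertices is similar to the $n$--gon of \refdef{Polygon}, so nothing exceptional arises. In dimension $3$ the symbol is $\{p,q\}$, the facet is the $p$--gon with interior angle $\pi(1-\tfrac2p)$, and the inequality becomes $q\,\pi(1-\tfrac2p)<2\pi$, i.e.\ $(p-2)(q-2)<4$; the solutions are $\{3,3\},\{4,3\},\{3,4\},\{5,3\},\{3,5\}$ --- three family members plus the dodecahedron and the icosahedron. In dimension $4$ the facet $\{p,q\}$ and the vertex figure $\{q,r\}$ must both be Platonic, forcing $q\in\{3,4,5\}$ and (since the common entry $q$ must be the $q$ of a Platonic $\{p,q\}$ and of a Platonic $\{q,r\}$) the candidate list $\{p,3,r\}$ with $p,r\in\{3,4,5\}$ together with $\{3,4,3\}$ and $\{3,5,3\}$; then $r\,\delta(\{p,q\})<2\pi$ kills $\{3,5,3\}$ (three icosahedral dihedral angles overfill $2\pi$) and forces $p=3$ or $r=3$ among the $\{p,3,r\}$. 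The survivors are $\{3,3,3\},\{3,3,4\},\{4,3,3\}$ (families) and the exceptional $\{3,4,3\},\{5,3,3\},\{3,3,5\}$ --- the $24$--, $120$--, and $600$--cells. For $n=5$ the facet and vertex figure are regular $4$--polytopes from these six; the shared block $\{p_2,p_3\}$ must be both a tail and a head of a valid $4$--symbol, hence $(3,3)$, $(3,4)$, or $(4,3)$, and the inequality then leaves only $\{3,3,3,3\}$, $\{3,3,3,4\}$, $\{4,3,3,3\}$, all families; every blocked candidate is a regular Euclidean or hyperbolic honeycomb. For $n\ge 6$ the induction hypothesis and \reflem{Poly} leave only the simplex, the cube, and the cross-polytope in dimension $n-1$, and the same shared-block-plus-inequality analysis forces $p_2=\dots=p_{n-2}=3$ and $p_1,p_{n-1}\in\{3,4\}$, the one new combination $\{4,3,\dots,3,4\}$ being the cubic honeycomb. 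So there are no exceptional regular polytopes in dimension $\ge 5$, and the exceptional ones are exactly the two in dimension $3$ and the three in dimension $4$ --- five in all, with duality reversing the symbol (so $\{5,3\}\leftrightarrow\{3,5\}$, $\{5,3,3\}\leftrightarrow\{3,3,5\}$, and $\{3,4,3\}$ self-dual), matching the statement.

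This establishes only \emph{necessity} --- that no symbol outside the list can occur. To finish one must realize each admissible symbol by an actual convex regular polytope: for the four families this is immediate from the convex-hull definitions, the dodecahedron (and dually the icosahedron, cf.\ \refdef{Dual}) is constructed in \refsec{Dodeca}, and the $24$--, $120$--, and $600$--cells in \refsec{120}. The part I expect to be genuinely delicate is not the bookkeeping but the two structural facts underpinning it: proving cleanly that the vertex figure of a regular polytope is a regular polytope of one lower dimension and that it sits rigidly on the circumscribed sphere (this is where spherical-polytope geometry and the characteristic-simplex picture must be handled carefully), and, entwined with it, justifying the \emph{strict} inequality at codimension two --- that is, correctly separating genuine polytopes from Euclidean and hyperbolic honeycombs. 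For these foundational points I would follow Coxeter's \emph{Regular Polytopes}, or the treatment cited in \cite{Toth64}.
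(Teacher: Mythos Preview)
The paper does not prove this theorem. It is stated with a bare \qed{} as a classical fact, in the same spirit as the preceding \reflem{Poly}, which the authors explicitly decline to prove and instead refer to \cite[page~143]{Toth64}. So there is no ``paper's own proof'' to compare against.

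Your proposal is the standard Schl\"afli--Coxeter classification argument and is essentially correct as an outline: encode regular polytopes by symbols, observe that facets and vertex figures are again regular (so initial and final subsymbols must already appear one dimension down), and prune candidates with the codimension-two angle inequality (equivalently, positive definiteness of the path Coxeter diagram). Your case analysis in dimensions three through five is accurate, including the identification of the borderline Euclidean honeycombs $\{4,3,4\}$, $\{3,3,4,3\}$, $\{3,4,3,3\}$, $\{4,3,3,4\}$. A couple of minor points: the icosahedral dihedral angle is about $138^\circ$, rather more than ``a little over $2\pi/3$'', though your conclusion for $\{3,5,3\}$ is unaffected; and for existence you appeal to \refsec{120} for the $24$-- and $600$--cells, but the paper only constructs the $120$--cell there, with the $24$--cell relegated to \refrem{24Cell} and the $600$--cell obtained only implicitly as the dual via \refdef{Dual}. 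You correctly flag the two genuinely delicate foundational points --- regularity of the vertex figure and strictness of the angle inequality --- and appropriately defer to Coxeter or \cite{Toth64} for them.
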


We construct the dodecahedron and the $120$--cell in
Sections~\ref{Sec:Dodeca} and~\ref{Sec:120}.

\section{Dodecahedron}
\label{Sec:Dodeca}

\subsection{Construction} 
\label{Sec:DodecaCon}

The dodecahedron exists for reasons more subtle than those, given
above, for the four families.  As such it has many constructions; the
earliest seems to be Proposition 17 in Book 13 of Euclid's
Elements~\cite{EuclidHeath56}.  See~\cite{Waterhouse72} for one
historical account of the five Platonic solids.





We give an indirect construction of the dodecahedron $D$ that has two
advantages.  The argument finds the symmetry group $\Sym(D)$ along the
way.  It also generalises to all other regular tessellations of the
sphere, the Euclidean plane, and hyperbolic plane.  

By continuity, for any angle $\theta \in \left( 3\pi/5, 7\pi/5 \right)$
there is a regular spherical pentagon $P \subset S^2$ with all angles
equal to $\theta$.  See \reffig{PentagonEDT} (left).
Thus we may take $\theta$ equal to $2\pi/3$.

\begin{figure}[htbp]
\centering 
\includegraphics[width=0.30\textwidth]{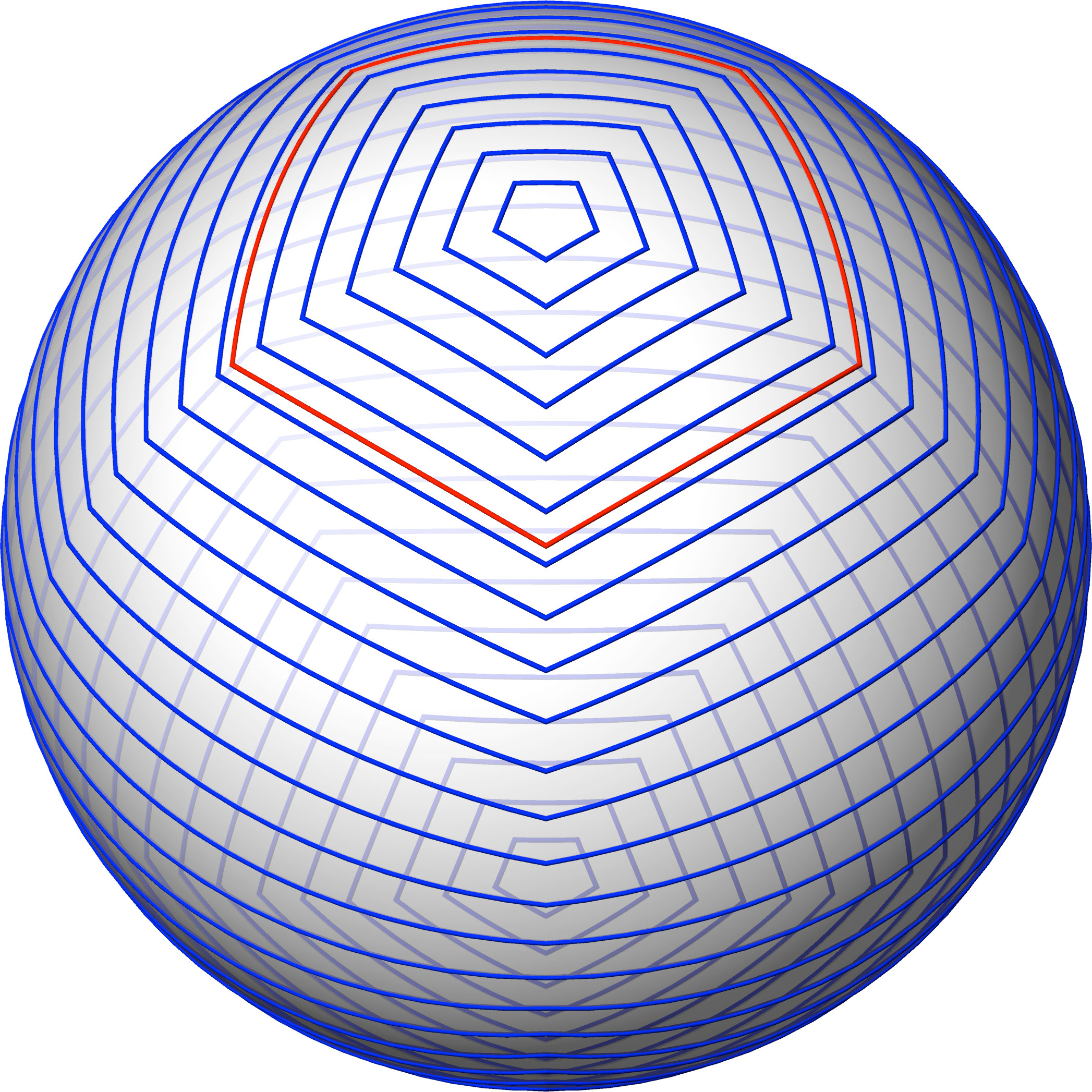}
\quad
\includegraphics[width=0.30\textwidth]{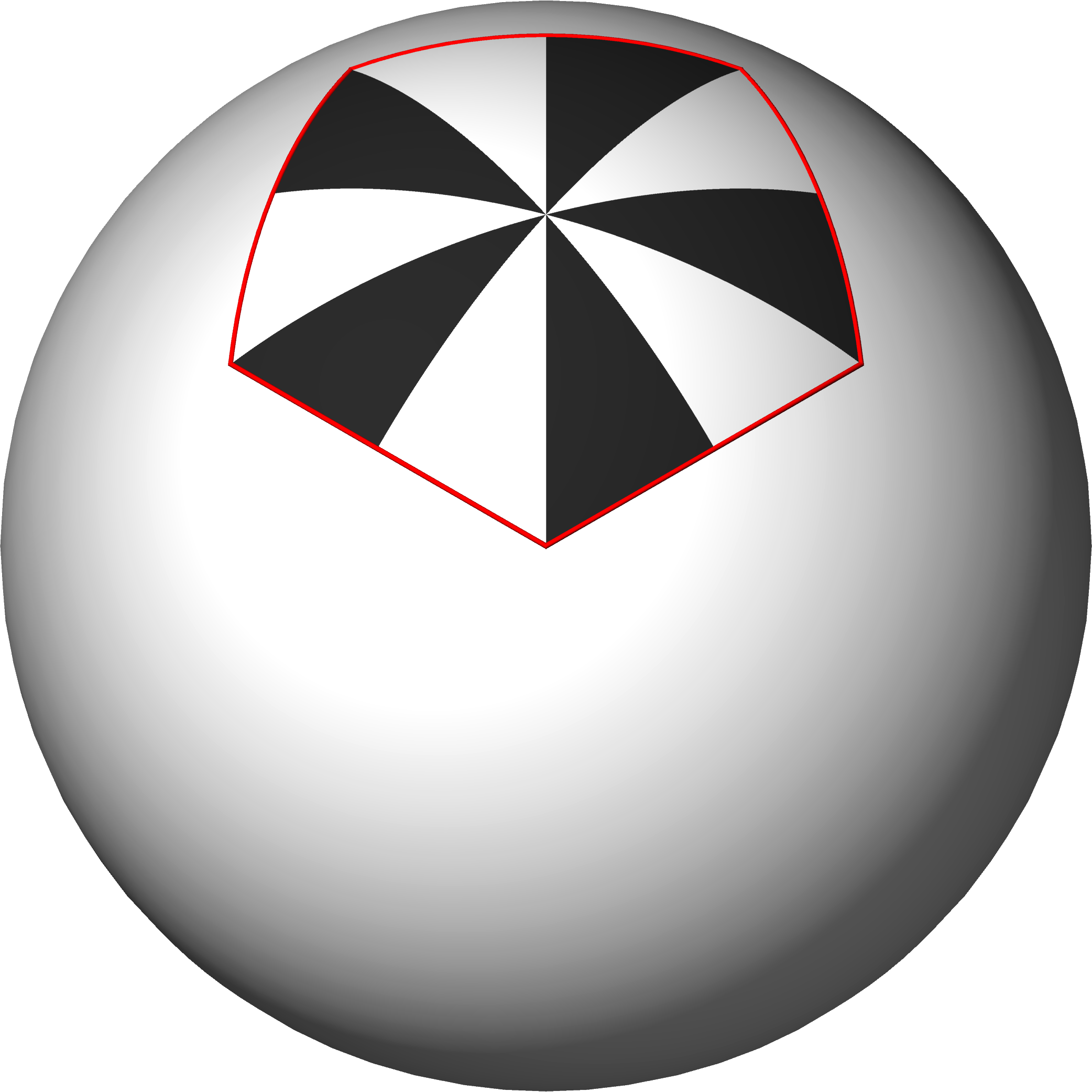}
\quad
\includegraphics[width=0.30\textwidth]{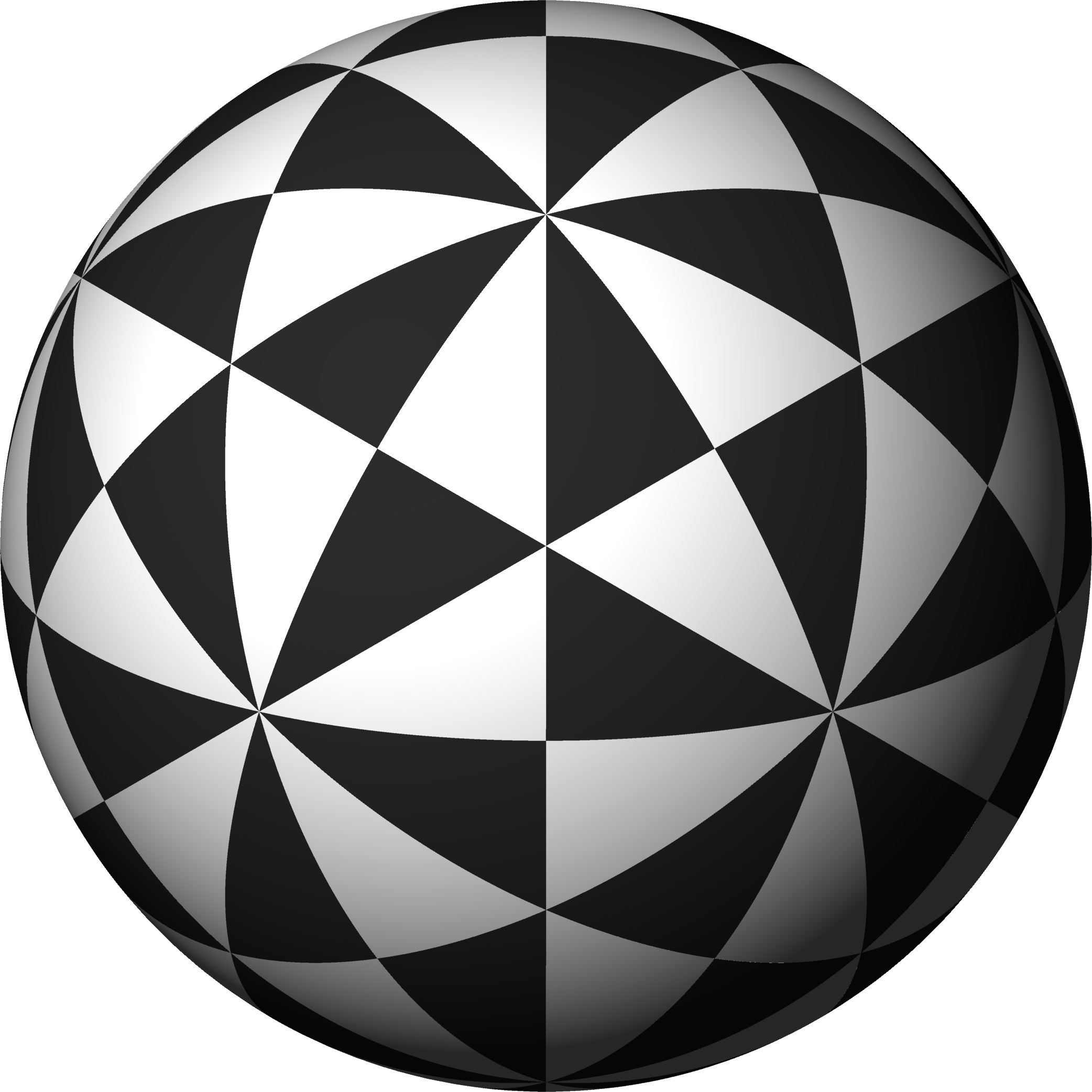}
\caption{Left: The continuity argument.  Centre: Dividing the pentagon
  into five right-handed spherical flags (in black) and five
  left-handed ones.  Right: The tiling $\calT$.}
\label{Fig:PentagonEDT} 
\end{figure}

Adding a vertex at the centre and at the midpoints of the edges, we
divide $P$ into ten spherical flag triangles.  These alternate between being
\emph{right-} and \emph{left-handed}; all have internal angles
$(\pi/2, \pi/3, \pi/5)$.  See \reffig{PentagonEDT} (centre).  These
three angles appear at the edge, vertex, and centre of $P$.  Let $T_R$
and $T_L$ be copies of the right and left handed spherical flag triangles, and
note that there are rotations of $S^2$ matching the edges of $T_R$ and
$T_L$ in pairs.

The celebrated Poincar\'e polygon
theorem~\cite[Theorem~4.14]{EpsteinPetronio94} now implies that copies
of $T_R$ and $T_L$ give a tiling $\calT$ of $S^2$, shown in
\reffig{PentagonEDT} (right).
Poincar\'e's theorem also implies that $\Sym(\calT)$ is transitive on
the triangles of $\calT$ and that any local symmetry extends to give
an element of $\Sym(\calT)$.


We now appeal to Girard's formula for the area of a triangle in
$S^2$~\cite[Equation~2.11]{Coxeter74}.  

\begin{lemma}
\label{Lem:Area}
A spherical triangle with interior angles $A$, $B$, $C$ has area $A +
B + C - \pi$. \qed
\end{lemma}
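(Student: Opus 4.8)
The plan is to give Harriot's classical lune argument. Work throughout on the unit sphere $S^2$, where the total area is $4\pi$; note that a genuine spherical triangle determines three pairwise distinct great circles, no two of which are coaxial, so the arrangement below is generic.

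First I would compute the area of a \emph{lune}, the region bounded by two great semicircles running between a pair of antipodal points and subtending dihedral angle $\theta$. Two lunes of the same angle are congruent via a rotation about their common axis, and a lune of angle $\theta$ together with an adjacent lune of angle $\theta'$ (sharing a boundary semicircle) fills the lune of angle $\theta + \theta'$; hence area depends linearly on angle. Since the lune of angle $2\pi$ is the whole sphere, the lune of angle $\theta$ has area $2\theta$.

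Now let $T$ be a spherical triangle with vertices $A, B, C$, writing $A, B, C$ also for its interior angles, and let $A^*, B^*, C^*$ be the antipodes. The three great circles carrying the sides of $T$ cut $S^2$ into eight spherical triangles, namely $ABC = T$, the three triangles $T_A = A^*BC$, $T_B = AB^*C$, $T_C = ABC^*$ obtained by reflecting one vertex, and their four antipodes. Two facts drive the proof. First, $T$ and $T_A$, glued along the edge $BC$, together fill the lune at vertex $A$ (its two boundary semicircles are the great-circle arcs through $AB$ and $AC$, closing up at $A^*$), so $|T| + |T_A| = 2A$, and similarly $|T| + |T_B| = 2B$ and $|T| + |T_C| = 2C$. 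Second, the four triangles $T, T_A, T_B, T_C$ are exactly the complement in $S^2$ of the antipodal quadruple $T^*, T_A^*, T_B^*, T_C^*$, so they tile a hemisphere: $|T| + |T_A| + |T_B| + |T_C| = 2\pi$.

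Summing the three lune identities and substituting the hemisphere identity gives
\[
2(A + B + C) = 3|T| + \bigl(|T_A| + |T_B| + |T_C|\bigr) = 3|T| + \bigl(2\pi - |T|\bigr) = 2|T| + 2\pi,
\]
so $|T| = A + B + C - \pi$, as claimed. I expect the one point needing genuine care to be the verification of the arrangement picture in the third paragraph — that three great circles in general position really do produce eight triangular regions with the stated vertex sets and adjacencies, and in particular that $T \cup T_A$ is precisely a lune rather than a larger region; everything else is bookkeeping. (One could instead invoke Gauss–Bonnet for the geodesic triangle $T$, but the lune argument is more elementary and better suits the expository aims of the paper.)
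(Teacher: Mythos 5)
The paper does not prove this lemma at all: it states Girard's formula with a \qed and a pointer to Coxeter, so there is nothing to compare against line by line. Your Harriot--Girard lune argument is the standard elementary proof and it is correct and complete: the linearity-in-angle computation of the lune area, the identity $|T| + |T_A| = 2A$ (and its two companions), and the final bookkeeping all check out, and you rightly identify the arrangement combinatorics (three great circles in general position give six vertices, twelve edges, and hence by Euler's formula eight faces, all triangles, with the stated vertex sets) as the one step needing verification. One small wording issue: the four regions $T, T_A, T_B, T_C$ do \emph{not} literally tile a hemisphere --- their union is not bounded by a single great circle --- but this does not matter, because the justification you actually give (they are the complement of their own antipodal image, and the antipodal map preserves area) is exactly what yields $|T| + |T_A| + |T_B| + |T_C| = 2\pi$. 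I would simply drop the word ``hemisphere'' and keep the complement argument. Your proof is also a better fit for the paper's expository register than the bare citation, and it is more elementary than the Gauss--Bonnet alternative you mention; it would slot naturally next to the paper's use of the formula to count the $120$ flag triangles of the tiling $\calT$.
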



\begin{figure}[htbp]
\centering 
\subfloat[]
{
\includegraphics[width=0.24\textwidth]{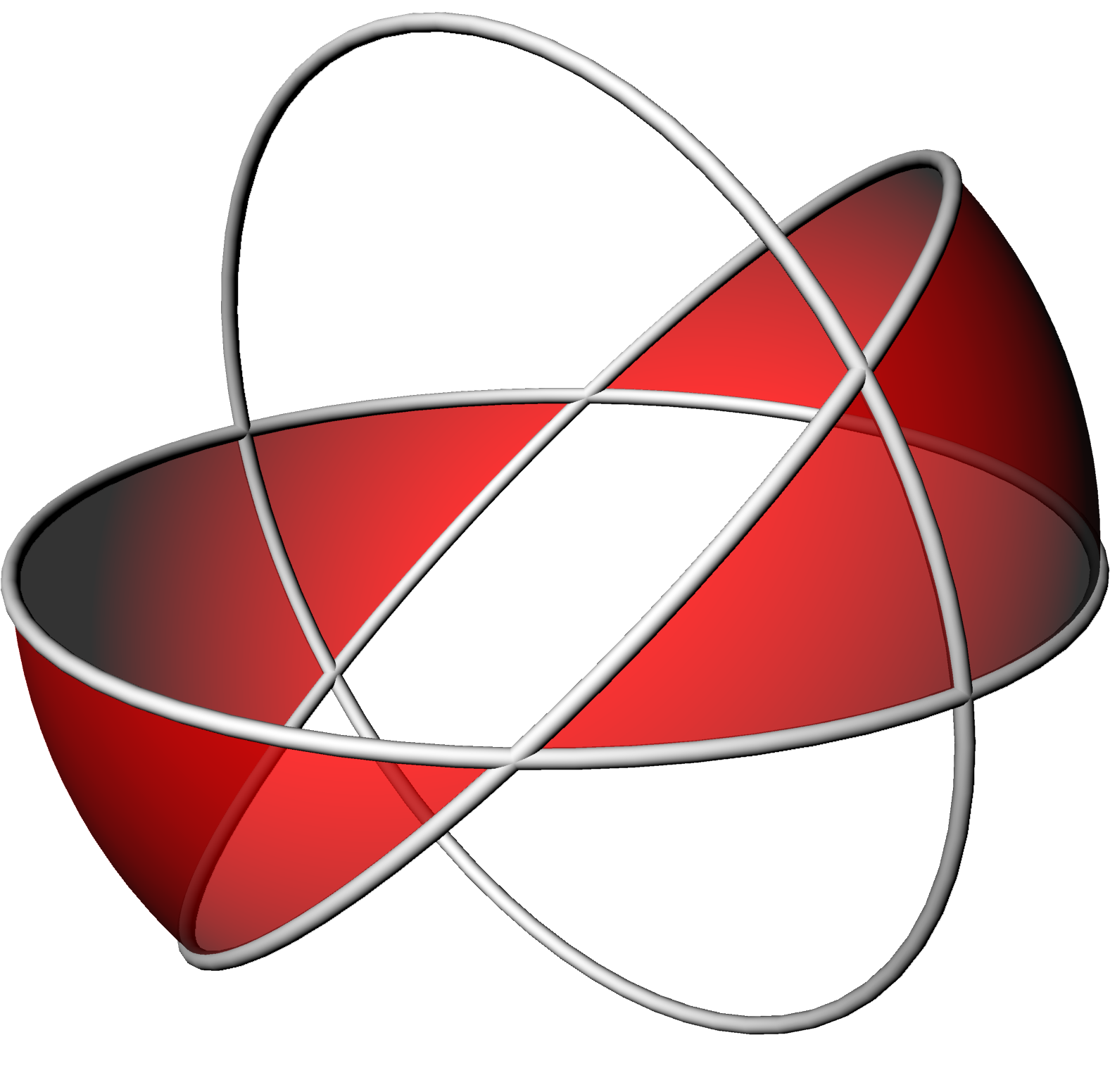}
\label{Fig:SphericalTriAreaBigon1}
}
\subfloat[]
{
\includegraphics[width=0.24\textwidth]{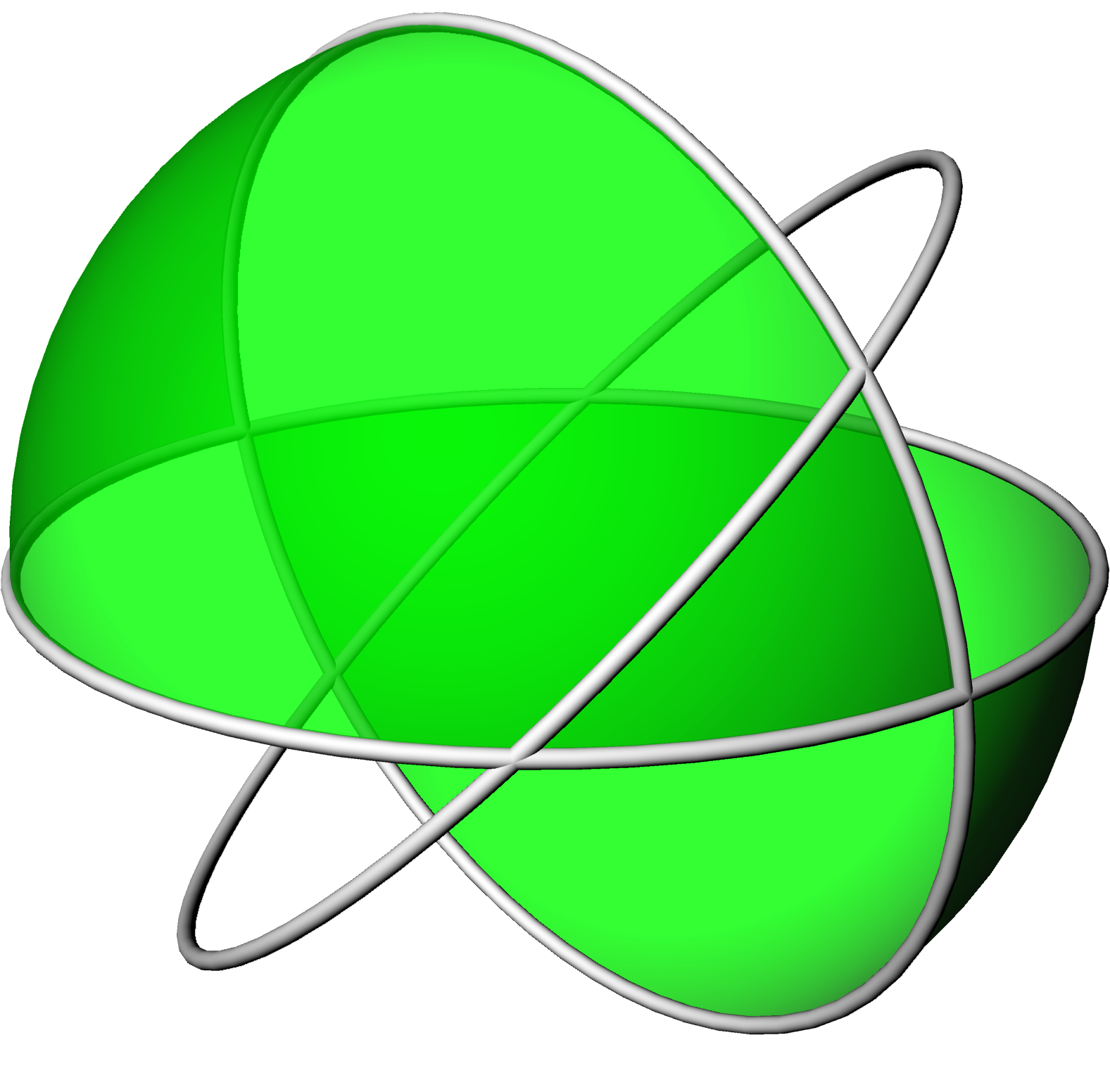}
\label{Fig:SphericalTriAreaBigon2}
}
\subfloat[]
{
\includegraphics[width=0.24\textwidth]{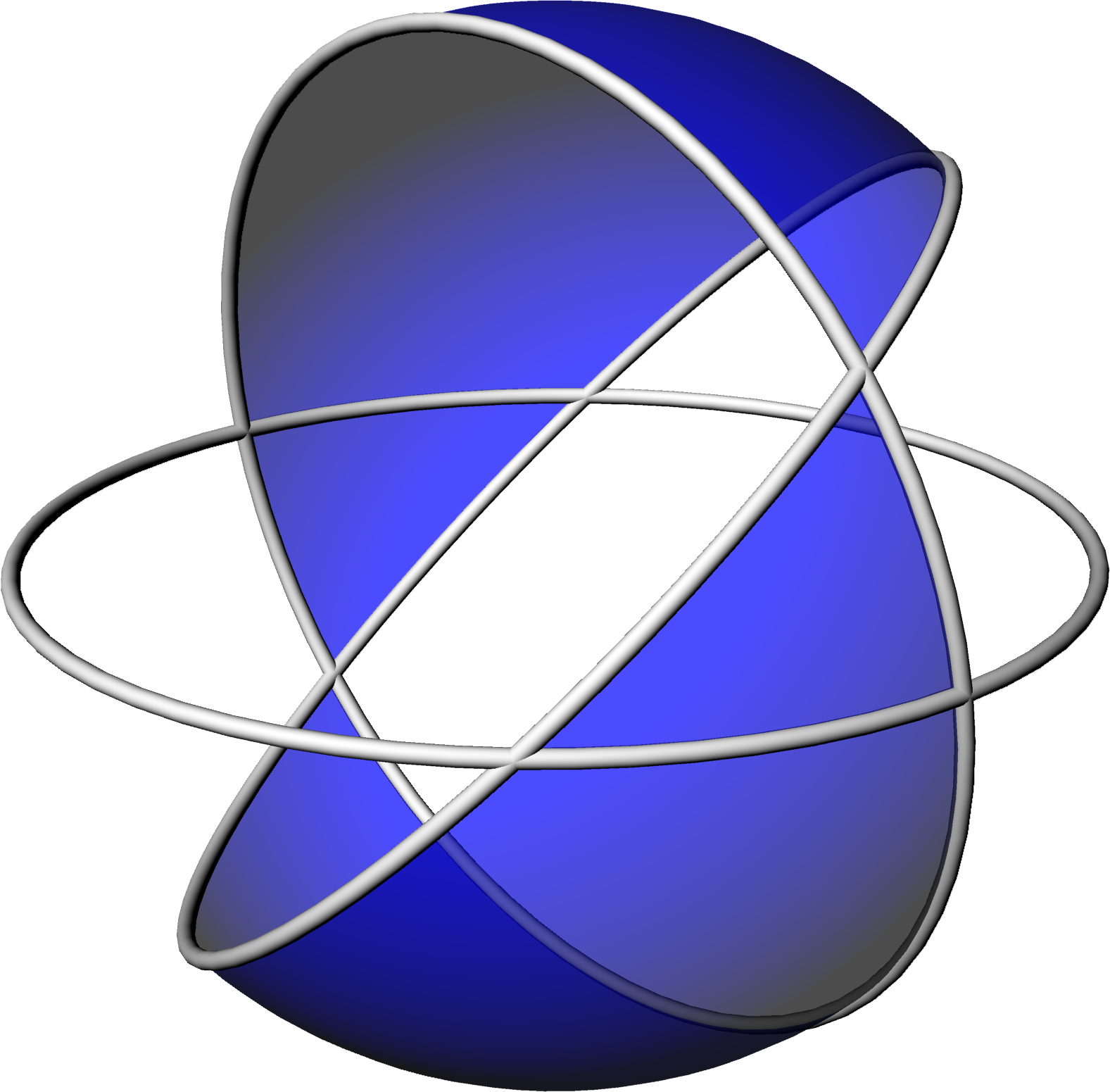}
\label{Fig:SphericalTriAreaBigon3}
}
\subfloat[]
{
\includegraphics[width=0.24\textwidth]{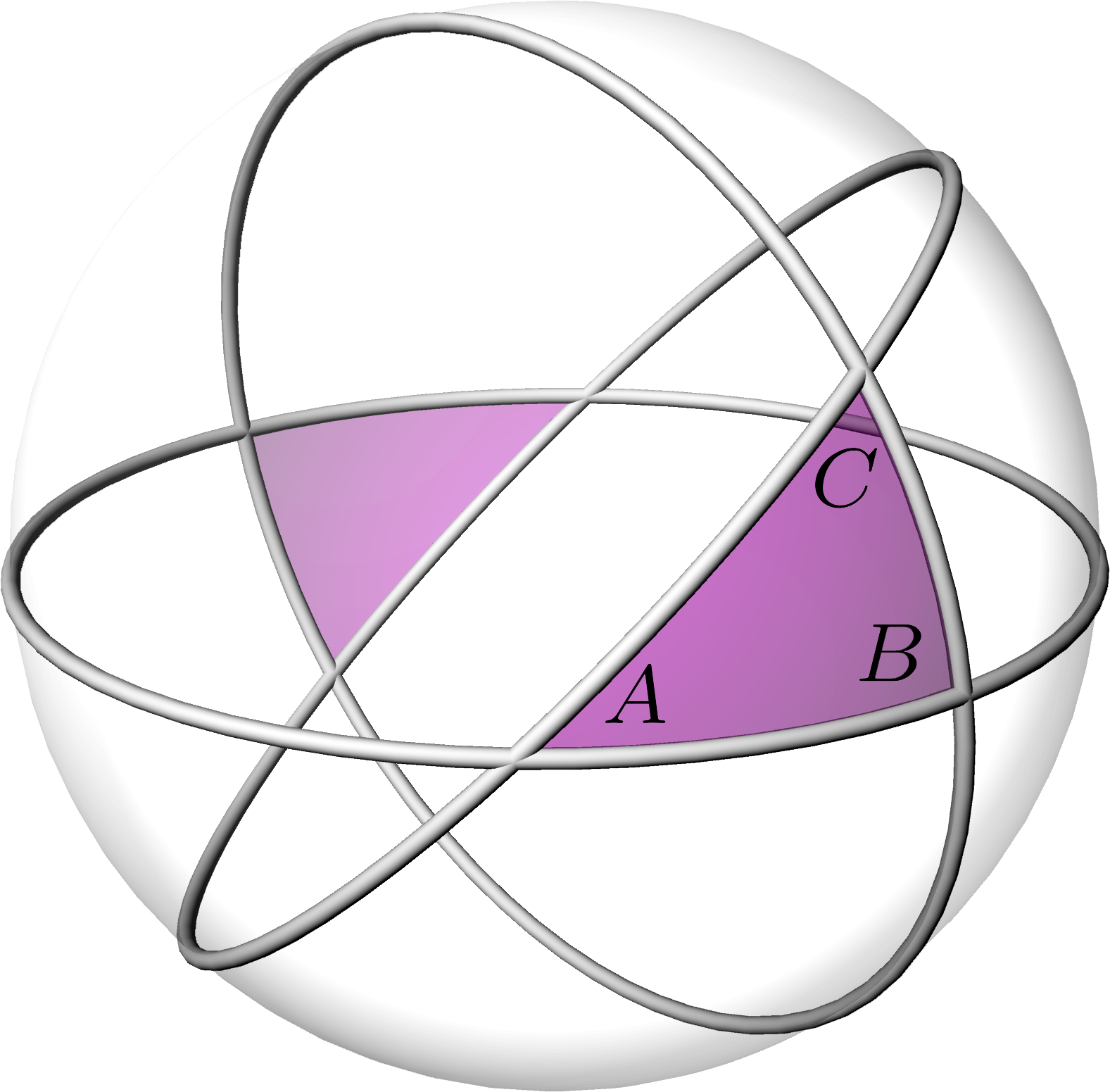}
\label{Fig:SphericalTriAreaTri}
} 
\caption{Proof of \reflem{Area}.}
\label{Fig:SphericalTriArea}
\end{figure}

A ``proof by picture'' of \reflem{Area} is given in
\reffig{SphericalTriArea}.  Thus the area of $T_R$ is
\[
\pi \cdot (1/2 + 1/3 + 1/5) - \pi 
= \pi/30.
\]
Since the area of $S^2$ is $4\pi$ deduce that the tiling $\calT$
contains $120$  triangles.  

\begin{definition}
\label{Def:Dodecahedron}
We partition $\calT$ into copies of $P$ to obtain the tiling
$\calT_D$; this has $12$ pentagonal faces, $12 \cdot 5/2 = 30$ edges,
and $12 \cdot 5/3 = 20$ vertices.  We take the convex hull (in
$\RR^3$) of the vertices of $\calT_D$ (in $S^2$) to obtain $D$, the
dodecahedron.
\end{definition}

We use $\SO(n)$ to denote the group of $n$-by-$n$ orthogonal matrices
with determinant one.  This is also the group of rigid motions of
$\RR^n$ fixing the origin. 
When $n = 3$ we have Euler's rotation theorem~\cite{Euler76}, as
follows.  Any $A \in SO(3)$ is a rotation about some \emph{axis}: a
line through the origin fixed pointwise by $A$.  When $A$ is not the
identity, this axis is unique.  See~\cite{PalaisEtAl09} for several
proofs and a historical discussion.

We end this section by examining the symmetries of $\calT$.

\begin{lemma}
\label{Lem:Tiling}
The group $\Sym(\calT)$ has order $120$; the orientation-preserving
subgroup $\calD = \Sym^+(\calT)$ has order $60$.  Also, the tiling
$\calT$ is invariant under the antipodal map.
\end{lemma}

\begin{proof}
Note $\calD$ is a subgroup of $\SO(3)$.  Fix a non-trivial element $F
\in \calD$.  So $F$ is a symmetry of $\calT$.  By Euler's rotation
theorem $F$ fixes, and rotates about, antipodal points $p, q \in S^2$.
If $p$ lies in the interior of a triangle $T$, then $F$ non-trivially
permutes the vertices of $T$, contradicting the fact that all of their
internal angles are distinct.  Suppose instead that $p$ lies in the
interior of an edge of $T$.  Then $F$ swaps the endpoints of the edge,
another contradiction.  The last possibility is that $p$ is a vertex
of $T$, say of degree $2d$.  In this case $F$ is one of the $d-1$
possible rotations.

We deduce that the orientation-preserving symmetries of $\calT$ are in
one-to-one correspondence with (say) the right-handed flag triangles.
This counts the elements of $\calD = \Sym^+(\calT)$ and thus of
$\Sym(\calT)$.

It remains to prove that $\calT$ is invariant under the antipodal map.
Suppose that $p$ is a vertex of degree $2d$ of $\calT$.  There is a
local symmetry $f$ of $\calT$ that rotates about $p$, with order $d$.
Thus $f$ extends to a global symmetry $F \in \SO(3)$.  Since $F$ is a
non-trivial rotation, Euler again gives us a pair of antipodal fixed
points for $F$ on the unit sphere $S^2$.  One of these is $p$; call
the antipode $q$.  Restricting $F$ to a small neighbourhood of $q$
yields a rotation of order $d$ (of the opposite handedness).  It follows
that $q$ is another vertex of $\calT$, also of degree $2d$.
\end{proof}

\pagebreak

\begin{wrapfigure}[15]{l}{0.40\textwidth}
\vspace{10pt}
\centering 
\includegraphics[width=0.35\textwidth]{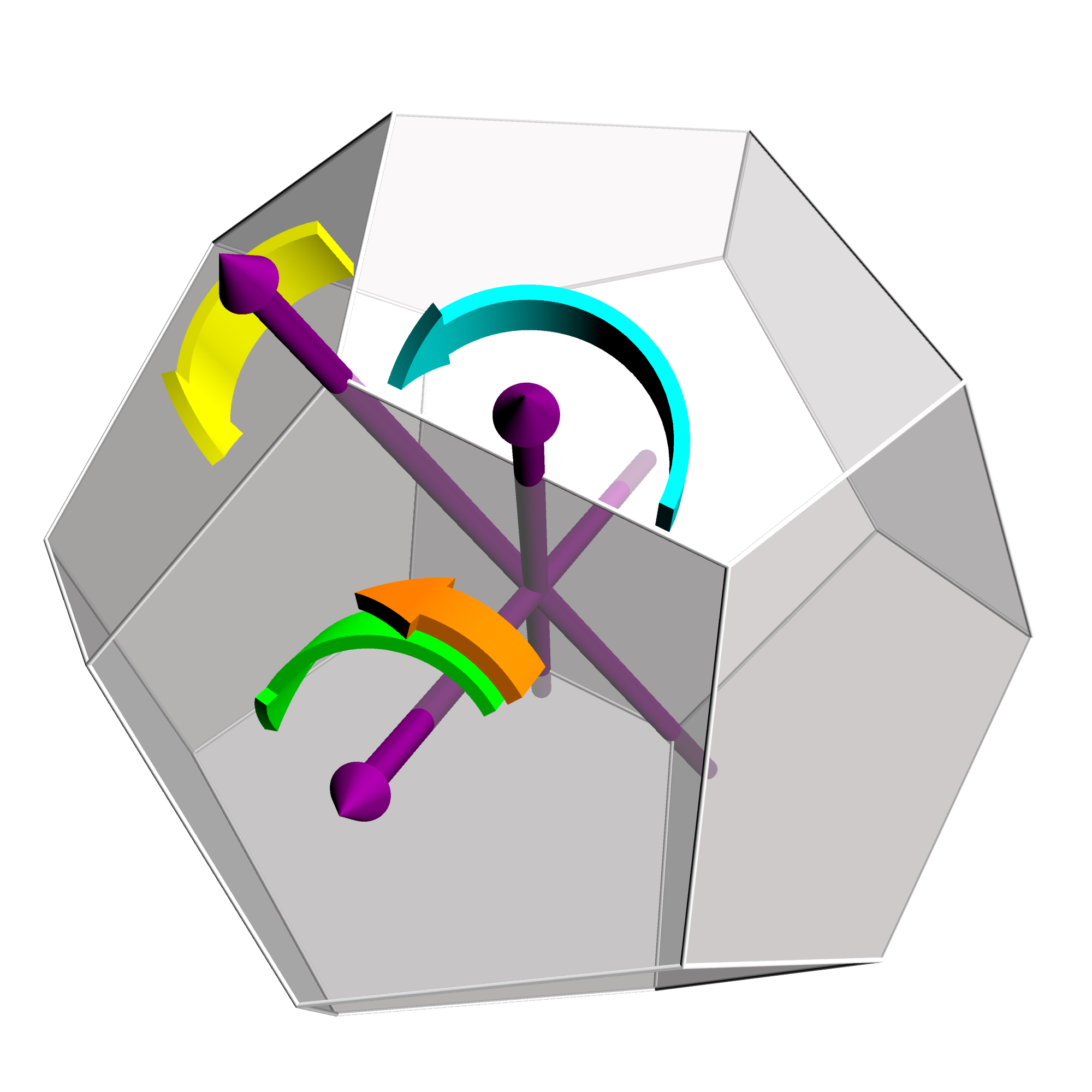}
\caption{Rotational symmetries of the dodecahedron.}
\label{Fig:DodecWithRotationArrows}
\end{wrapfigure}

\mbox{}
\vspace{-22pt}
\vspace{-\lineskip}

\begin{corollary}
\label{Cor:ElementsOfD}
The group $\calD$ contains
\begin{itemize}
\item
the identity,
\item
$12$ face rotations through angle $2\pi/5$,
\item
$20$ vertex rotations through angle $2\pi/3$,
\item
$12$ face rotations through angle $4\pi/5$, and
\item
$15$ edge rotations through angle $\pi$.
\end{itemize}
\end{corollary}

\begin{proof}
For any vertex $p$ of $\calT$ of degree $2d$ we obtain a cyclic
subgroup $\ZZ/d\ZZ$ in $\calD$.  By the second part of \reflem{Tiling}
the vertex $p$ and its antipode $q$ give rise to the same subgroup.
Thus we may count elements of $\calD$ by always restricting to those
rotations through an angle of $\pi$ or less.  Counting the symmetries
obtained this way gives $60$; by the first part of \reflem{Tiling}
there are no others.
\end{proof}

\subsection{Trigonometry}
\label{Sec:DodecaGeom}

For the construction of the $120$--cell, in \refsec{120}, we require
some trigonometric information about $\calT_D$.  Recall that $P$ is a
regular spherical pentagon with all angles equal to $2\pi/3$.

\begin{wrapfigure}[16]{r}{0.37\textwidth}
\vspace{15pt}
\centering 
\labellist
\small\hair 2pt
\pinlabel $f$ at 105 105
\pinlabel $v$ at 5 59
\pinlabel $\frac{\pi}{2}$ at 38 104
\pinlabel $\frac{\pi}{3}$ at 20 82
\pinlabel $\frac{\pi}{5}$ at 72 103
\pinlabel $a$ at 58 84
\pinlabel $a$ at 93 32
\endlabellist
\includegraphics[width=0.32\textwidth]{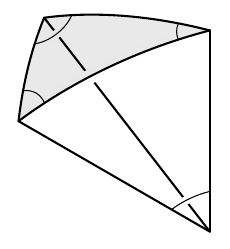}
\caption{Spherical flag triangle, coned to the origin.}
\label{Fig:SphericalAngles}
\end{wrapfigure}

\vspace{-12pt}
\mbox{}

\begin{lemma}
\label{Lem:Edge}
The spherical distance between the face centre $f$ and the vertex $v$
of $P$ is
\[
\displaywidth=\parshapelength\numexpr\prevgraf+2\relax
\arccos \left(\frac{1}{\sqrt{3}} \cot \pi/5 \right).
\] 
\end{lemma}

\begin{proof}
Any spherical triangle with angles $A, B, C$ and opposite edge lengths
$a, b, c$ satisfies the dual spherical law of
cosines~\cite[pages~74--76]{Thurston97}:
\[
\displaywidth=\parshapelength\numexpr\prevgraf+2\relax
\cos A = - \cos B \cos C + \sin B \sin C \cos a.
\]
Recall the pentagon $P$ is a union of 10 triangles; any one of
these is a spherical triangle $T$ with angles $A = \pi/2$, $B =
\pi/3$, and $C = \pi/5$.  Using the law of cosines we find
$\cos a = \frac{1}{\sqrt{3}} \cot \pi/5$, as desired. 
\end{proof}

\begin{corollary}
\label{Cor:Edge}
The square of the Euclidean distance between the face centre $f$ and
the vertex $v$ of $P$ is $2 - \frac{2}{\sqrt{3}} \cot \pi/5$. \qed
\end{corollary}


We gather together several trigonometric facts needed to construct the
$120$--cell.  For an elementary and enlightening discussion, see
Langlands' lectures~\cite[Part~3,~pages~1-9]{LanglandsDuke}.


\begin{center}
\begin{tabular}{cccc}
  $\theta$  &$\cos \theta$ &$\sin \theta$ &$\cot \theta$ \\
\midrule
$\pi/5$     &$\frac{1}{4}\left(1 + \sqrt{5}\right)$  
            &$\frac{1}{4}\sqrt{10 - 2\sqrt{5}}$ 
            &$\sqrt{1 + \frac{2}{\sqrt{5}}}$ \\
$2\pi/5$    &$\frac{1}{4}\left(-1 + \sqrt{5}\right)$  
            &$\frac{1}{4}\sqrt{10 + 2\sqrt{5}}$ 
            &$\sqrt{1 - \frac{2}{\sqrt{5}}}$ 
\end{tabular}
\end{center}
Deduce the following identities. 
\begin{align}
\label{Eqn:Trig1}
\cot^2 \pi/5 + \cot^2 2\pi/5 &= 2 \\
\label{Eqn:Trig2}
4 \cos^2 \pi/5  - 2 \cos \pi/5 - 1 &= 0
\end{align}

\section{Four-space and quaternions}
\label{Sec:Quatern}

In this section we review the quaternions, the three-sphere, and
stereographic projection.  See also~\cite[Chapter~6]{Coxeter74},
\cite[Section~2.7]{Thurston97}, or~\cite[Part~II]{ConwaySmith03}. The
quaternions bridge the gap between the algebra of certain groups and
the geometry of four-dimensional space.  The three-sphere is the
natural home of the spherical $120$--cell.

Due to the physiology of the human eye, we only ever see
two-dimensional images.  The brain instinctively interprets some of
these as representing three-dimensional objects, but is not equipped
to deal with higher dimensions.  Hence we do not attempt to draw any
native pictures of four-dimensional objects.
Instead, we use stereographic projection to transport objects from the
three-sphere into three-dimensional space, where they can be seen with
human eyes.


\subsection{The quaternions}

The real numbers $\RR$, being one-dimensional, can be augmented by
adding $i = \sqrt{-1}$ to obtain the two-dimensional complex numbers
$\CC$.  In very similar fashion Hamilton augmented $\CC$ to obtain the
quaternions $\HH$.  Let $\basis{1, i, j, k}$ be the usual orthonormal
basis for $\RR^4$.  We take $\HH = \RR \oplus \II$, where $\II = i\RR
\oplus j\RR \oplus k\RR$ is the subspace of \emph{purely imaginary}
quaternions.  Following Hamilton we endow $\HH$ with the relations
\[
i^2 = j^2 = k^2 = ijk = -1. 
\]
These relations, $\RR$--linearity, associativity, and distributivity
allow us to compute any product in $\HH$.  

If $p = a + bi + cj + dk \in \HH$ then we call $a$ the \emph{real
  part} of $p$ and $bi + cj + dk$ the \emph{imaginary part} of $p$.
We call $\Cong{p} = a - bi - cj - dk$ the \emph{conjugate} of $p$.
Since $ij = -ji$ and so on, we deduce that $\Cong{p \cdot q} =
\Cong{q} \cdot \Cong{p}$ for any $p, q \in \HH$.  

It is impossible to separate the algebra of the quaternions from their
geometry.  For example, the usual norm and Euclidean distance on $\HH$
are given by
\[
|p| = \sqrt{p \Cong{p}} = \sqrt{a^2 + b^2 + c^2 + d^2} \quad \mbox{and}
\quad d_\HH(p,q) = |p - q|.
\]
Thus $|pq|^2 = pq \Cong{pq} = pq \cdot \Cong{q} \cdot \Cong{p} = p
|q|^2 \Cong{p} = |p|^2 |q|^2$, and so $|pq| = |p||q|$.

Since $\HH$ is identical to $\RR^4$ as a real vector space, there is a
copy of the three-sphere inside the quaternions: namely, $S^3 = \{ q
\in \HH : |q| = 1\}$.  The metric on $\HH$ induces the round metric on
the sphere, namely
\[
d_S(p,q) = \arccos(\langle p, q \rangle), 
\]
where $\langle p, q \rangle = \sum p_i q_i$ is the usual inner
product.  
The function from $S^3$ to itself taking $p$ to $-p$ is called the
\emph{antipodal map}.  When $L \subset \HH$ is a linear subspace of
dimension one, two, or three the intersection $L \cap S^3$ is a pair
of antipodal points, a \emph{great circle}, or a \emph{great sphere},
respectively.  We call the antipodal points $1$ and $-1$, as they lie
in $S^3$, the \emph{south} and \emph{north} poles, respectively.  We
call $S^2_\II = S^3 \cap \II$ the \emph{equatorial} great sphere.  See
\reffig{Axes} for a depiction of how several great circles among
$1,i,j,k$ lie inside of $S^3$.

\subsection{The unit quaternions}

The points of the three-sphere, the \emph{unit quaternions}, form a
group under quaternionic multiplication.  The point $1 \in S^3$ serves
as the identity, associativity follows from the associativity of
$\HH$, and inverses are given by $q^{-1} = \Cong{q}$.
Again, we see how the group structure and geometry of $S^3$ are
tightly intertwined, as follows. 

\begin{lemma}
\label{Lem:Isom}
The left and right actions of $S^3$ on $\HH$ are via
orientation-preserving isometries.  The same holds for the
three-sphere's action on itself.
\end{lemma}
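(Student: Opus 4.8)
The plan is to verify directly that left multiplication $L_q \from \HH \to \HH$, $L_q(p) = qp$, preserves the Euclidean norm and is $\RR$--linear, hence is an orthogonal transformation of $\RR^4$; the same argument handles right multiplication $R_q(p) = pq$. Linearity over $\RR$ is immediate from distributivity and $\RR$--linearity of quaternion multiplication (the real scalars lie in the center of $\HH$). For the norm, I would invoke the multiplicativity $|pq| = |p||q|$ established in the excerpt: since $|q| = 1$ for $q \in S^3$, we get $|L_q(p)| = |qp| = |q||p| = |p|$, so $L_q \in \Ortho(4)$. Thus $L_q$, and likewise $R_q$, is an isometry of $\HH$; restricting to the unit sphere, it carries $S^3$ to itself isometrically, which gives the action on the three-sphere.

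It remains to show these isometries are orientation-preserving, i.e.\ that $\det L_q = +1$. The cleanest route is a connectedness argument: the map $q \mapsto L_q$ is a continuous map from $S^3$ into $\Ortho(4)$, and $S^3$ is connected, so its image lies in a single component of $\Ortho(4)$. Since $L_1 = \Id$ has determinant $+1$, the whole image lies in $\SO(4)$; the identical argument applies to $q \mapsto R_q$. For the action on $S^3$ itself, an orientation-preserving isometry of $\RR^4$ fixing the origin restricts to an orientation-preserving isometry of the round sphere $S^3$ (the outward normal is preserved, so orientation of the sphere is preserved), which finishes the lemma.

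The main obstacle is really just bookkeeping around the orientation claim: one must make sure the continuity of $q \mapsto L_q$ is genuinely clear (the entries of the matrix of $L_q$ in the basis $\basis{1,i,j,k}$ are $\RR$--linear, indeed polynomial, in the coordinates of $q$, so this is transparent), and one must be slightly careful that "orientation-preserving on $\HH$" transfers correctly to "orientation-preserving on $S^3$" — but since the sphere inherits its orientation from the ambient $\RR^4$ via the outward normal, and the normal direction is preserved by a linear map, this is routine. An alternative to the connectedness argument would be to compute $\det L_q$ explicitly and recognize it as $(|q|^2)^2 = 1$, but the topological argument is shorter and avoids the $4 \times 4$ determinant. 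I would present the connectedness version.
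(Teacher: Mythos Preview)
Your proposal is correct and follows essentially the same route as the paper: both arguments deduce the isometry property from the multiplicativity of the quaternionic norm (the paper phrases it as $d_\HH(pq,pr) = |p(q-r)| = |p||q-r| = |q-r|$, you as $|L_q(p)| = |p|$ plus linearity), and both obtain orientation-preservation from the connectedness of $S^3$ together with $L_1 = \Id$. Your write-up is more explicit about continuity of $q \mapsto L_q$ and about transferring orientation from $\HH$ to $S^3$, but the skeleton of the argument is identical.
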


\begin{proof}
Fix $p \in S^3$ and $q, r \in \HH$.  We compute $d_\HH(pq, pr) = |pq -
pr| = |p(q-r)| = |p||q-r| = |q - r| = d_\HH(q,r)$, verifying the left
action is via isometry.  Since $S^3$ is connected, and since $1$ acts
trivially, the action is orientation preserving.  Also, the action
preserves the three-sphere, and so preserves the induced metric.
\end{proof}

The group elements $\pm 1$ are very special; they are the only
elements that are their own inverses.  The sphere $S^2_\II$ of pure
imaginaries is much more homogeneous, as follows.

\begin{lemma}
\label{Lem:Orthogonal}
We have 
\[
u^2 = v^2 = w^2 = uvw = -1
\]
when $\basis{u,v,w}$ is a right-handed orthonormal basis for
$\II$. \qed
\end{lemma}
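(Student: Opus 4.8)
The plan is to reduce the whole statement to two elementary identities about unit purely imaginary quaternions, and then to settle one stubborn sign. First I would record: for every $x \in \II$ one has $x^2 = -|x|^2$ (since $\Cong{x} = -x$ gives $x\Cong{x} = -x^2$, while $x\Cong{x} = |x|^2$); in particular $p^2 = -1$ whenever $p \in S^2_\II$. Next, for $p, q \in S^2_\II$, observe that $\Cong{pq} = \Cong{q}\,\Cong{p} = (-q)(-p) = qp$, so $pq + qp = pq + \Cong{pq} = 2\operatorname{Re}(pq)$ is a real number; expanding $(p+q)^2 = p^2 + (pq + qp) + q^2 = -2 + (pq+qp)$ and comparing with $(p+q)^2 = -|p+q|^2 = -2 - 2\langle p, q\rangle$ gives
\[
pq + qp = -2\langle p, q\rangle .
\]
In particular, two elements of $S^2_\II$ are orthogonal if and only if they anticommute.

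Applying this to the right-handed orthonormal basis $\basis{u,v,w}$ yields $u^2 = v^2 = w^2 = -1$ immediately, so it remains only to show $uvw = -1$, and for that it suffices to prove $uv = w$. Now $|uv| = |u|\,|v| = 1$, and $\operatorname{Re}(uv) = \tfrac12(uv + \Cong{uv}) = \tfrac12(uv + vu) = -\langle u,v\rangle = 0$, so $uv \in S^2_\II$. A couple of lines of associative manipulation then show $u(uv) = u^2 v = -v$ while $(uv)u = u(vu) = -u^2 v = v$, so $uv$ anticommutes with $u$; symmetrically $(uv)v = uv^2 = -u$ while $v(uv) = (vu)v = -uv^2 = u$, so $uv$ anticommutes with $v$. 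By the preceding paragraph $uv$ is orthogonal to both $u$ and $v$, hence to their span; since $\basis{u,v,w}$ is an orthonormal basis of the three-dimensional space $\II$, this forces $uv = \pm w$, and therefore $uvw = (uv)w = \pm w^2 = \mp 1$.

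It remains to fix the sign, and this is the only genuine subtlety — everything above is forced by the algebra up to that one ambiguity. I would argue by connectedness: the quantity $\langle uv, w\rangle$ takes values in $\{+1,-1\}$ and depends continuously on the right-handed orthonormal frame $(u,v,w)$, but the space of such frames is a torsor over the connected group $\SO(3)$, hence connected, so $\langle uv, w\rangle$ is constant. Evaluating at the standard frame $(i,j,k)$, where $ij = k$ and so $\langle ij, k\rangle = 1$, we conclude $uv = w$ and thus $uvw = w^2 = -1$. An alternative to the connectedness step is to identify $\operatorname{Re}(pqr)$, for purely imaginary $p,q,r$, with minus the determinant of the matrix whose columns are $p,q,r$; but that route essentially re-derives the cross-product description of quaternion multiplication, so the connectedness argument seems the cleaner way to close the one remaining gap.
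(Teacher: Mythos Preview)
Your proof is correct. Note, however, that the paper does not actually prove this lemma: it is stated with a trailing \qed\ and no argument, so there is nothing to compare against. Your write-up supplies exactly the kind of verification the paper omits --- the polarisation identity $pq + qp = -2\langle p,q\rangle$ for purely imaginary $p,q$, the deduction that $uv \in S^2_\II$ is orthogonal to both $u$ and $v$, and the connectedness argument over $\SO(3)$ to pin down the sign --- and each step is sound.
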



We can now parametrise great circles in $S^3$ through the identity.
For any $u \in S^2_\II$ define $L_u = \basis{1, u}$ to be the
corresponding plane in $\HH$.  The intersection $L_u \cup S^3$ is thus
a great circle $C_u$.  We parametrise $C_u$ by sending $\alpha \in
\RR$ to the point
\begin{align}
\label{Eqn:Param}
e^{u\alpha} &= \cos \alpha + u \cdot \sin \alpha.
\end{align}

\begin{wrapfigure}[16]{r}{0.40\textwidth}
\vspace{-5pt}
\centering 
\labellist
\pinlabel $\mathbb{I}$ at 97 263
\small\hair 2pt
\pinlabel $u$ at 117 246
\pinlabel $-u$ at 120 17
\pinlabel $1$ at 218 133
\pinlabel $-1$ at -13 132
\pinlabel {$q = e^{u\alpha}$} [Bl] at 163 227
\pinlabel $\alpha$ at 126 142
\pinlabel $\rho(q)$ at 87 196
\endlabellist
\includegraphics[width=0.30\textwidth]{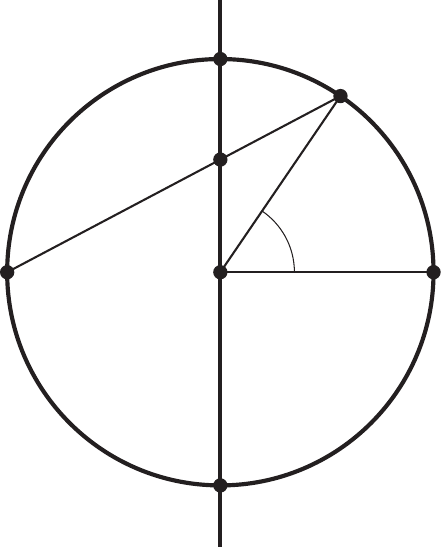}
\caption{Stereographic projection from $S^1 - \{-1\}$ to
  $\mathbb{I}$.}
\label{Fig:StereoProj}
\end{wrapfigure}

\mbox{}
\vspace{-17pt}

\begin{lemma}
\label{Lem:TrigProd}
For any pure imaginary $u \in S^2_\II$ and for any $\alpha, \beta \in
\RR$ we have $e^{u\alpha} e^{u\beta} = e^{u(\alpha + \beta)}$.  Thus
$\{e^{u\alpha}\}$ is a one-parameter subgroup of $S^3$.  Also, $d_S(1,
e^{u\alpha}) = \alpha$ for $\alpha \in [0,\pi]$. \qed
\end{lemma}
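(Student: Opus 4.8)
The plan is to prove the product formula by a direct computation from the definition $e^{u\alpha} = \cos\alpha + u\sin\alpha$ in \refeqn{Param}, and then to read off the one-parameter subgroup statement and the distance formula as immediate consequences.

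First I would expand, using $\RR$--linearity, distributivity, and associativity in $\HH$:
\[
e^{u\alpha}\, e^{u\beta} = (\cos\alpha + u\sin\alpha)(\cos\beta + u\sin\beta) = \cos\alpha\cos\beta + u^2 \sin\alpha\sin\beta + u\,(\sin\alpha\cos\beta + \cos\alpha\sin\beta).
\]
The only algebraic input needed is that $u^2 = -1$. Since $u \in S^2_\II$ is a unit pure imaginary, it can be completed to a right-handed orthonormal basis $\basis{u,v,w}$ of $\II$, so this follows from \reflem{Orthogonal}. Substituting $u^2 = -1$ and applying the angle-addition identities for $\cos$ and $\sin$ yields $e^{u\alpha} e^{u\beta} = \cos(\alpha+\beta) + u\sin(\alpha+\beta) = e^{u(\alpha+\beta)}$, as claimed.

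For the subgroup statement: $e^{u\cdot 0} = 1$ is the identity of $S^3$; the product formula gives closure; and taking $\beta = -\alpha$ shows $(e^{u\alpha})^{-1} = e^{u(-\alpha)}$. Each $e^{u\alpha}$ has norm $\sqrt{\cos^2\alpha + \sin^2\alpha} = 1$, hence lies in $S^3$. Thus $\{e^{u\alpha} : \alpha \in \RR\}$ is a subgroup of $S^3$, and $\alpha \mapsto e^{u\alpha}$ is a one-parameter homomorphism $\RR \to S^3$.

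Finally, for the distance I would use $d_S(1, e^{u\alpha}) = \arccos\langle 1, e^{u\alpha}\rangle$. Because $1$ is orthogonal to $\II$, we have $\langle 1, \cos\alpha + u\sin\alpha\rangle = \cos\alpha$, so $d_S(1, e^{u\alpha}) = \arccos(\cos\alpha) = \alpha$ for $\alpha$ in the range $[0,\pi]$ on which $\arccos$ inverts $\cos$. There is no real obstacle in any of this; the only step needing a moment's attention is the appeal to \reflem{Orthogonal} to obtain $u^2 = -1$, which is valid precisely because $u$ is a unit vector and hence extends to a right-handed orthonormal basis of $\II$.
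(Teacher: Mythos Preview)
Your proof is correct. The paper does not actually give a proof of this lemma: it is stated with a terminal \qed, signalling that the authors regard it as immediate from the definition in \refeqn{Param} together with \reflem{Orthogonal} and the definition of $d_S$. Your direct expansion is precisely the intended verification, and there is nothing to compare.
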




This gives a parametrisation of $S^3$, as follows.

\begin{lemma}
\label{Lem:TrigExistsUnique}
For any $q \in S^3 - \{\pm 1\}$ there is a unique $u \in S^2_\II$ and
a unique $\alpha \in (0, \pi)$ so that $q = e^{u\alpha}$. \qed
\end{lemma}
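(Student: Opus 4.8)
The plan is to strip $q$ into its real and imaginary parts and then simply read off $\alpha$ and $u$; both existence and uniqueness fall out of this with essentially no work beyond bookkeeping.

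First I would write $q = a + v$, where $a \in \RR$ is the real part of $q$ and $v \in \II$ is its imaginary part, so that $1 = |q|^2 = a^2 + |v|^2$. Because $q \neq \pm 1$, the imaginary part $v$ is nonzero; hence $a^2 = 1 - |v|^2 < 1$, so $a \in (-1, 1)$. Now set $\alpha = \arccos(a)$. Since $\arccos$ is a bijection from $(-1,1)$ onto $(0, \pi)$, this is the unique $\alpha \in (0,\pi)$ with $\cos\alpha = a$. On this interval $\sin\alpha > 0$, and $\sin^2\alpha = 1 - a^2 = |v|^2$, so in fact $\sin\alpha = |v|$. Put $u = v / |v|$; this is a unit vector in $\II$, that is, $u \in S^2_\II$. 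Then by the definition \eqref{Eqn:Param},
\[
e^{u\alpha} = \cos\alpha + u \sin\alpha = a + \frac{v}{|v|}\cdot |v| = a + v = q,
\]
which proves existence.

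For uniqueness, suppose also $q = e^{u'\alpha'}$ with $u' \in S^2_\II$ and $\alpha' \in (0,\pi)$. Comparing real parts — the real part of a quaternion is intrinsic, not an artifact of the chosen basis — we get $\cos\alpha' = a = \cos\alpha$, and since $\cos$ is injective on $(0,\pi)$ this forces $\alpha' = \alpha$. Comparing imaginary parts gives $u'\sin\alpha' = v = u\sin\alpha$, and as $\sin\alpha = \sin\alpha' > 0$ we may divide to conclude $u' = u$.

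The one point that requires a moment's care is the sign: one must invoke that $\alpha$ lies in the \emph{open} interval $(0,\pi)$ — not merely that $\cos\alpha = a$ — in order to deduce $\sin\alpha = +|v|$ rather than $-|v|$. This is precisely why $(0,\pi)$, parametrising a half-circle that avoids both poles, is the right range, and why the degenerate cases $q = \pm 1$ (where $v = 0$ and $u$ is not determined) must be excluded. Beyond this, there is no real obstacle.
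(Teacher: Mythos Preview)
Your argument is correct. The paper itself does not supply a proof of this lemma at all --- it is stated with a terminal \qed and left to the reader --- so there is no comparison to make; your write-up is exactly the kind of routine verification the authors intend the reader to fill in.
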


\subsection{Stereographic projection}
\label{Sec:StereoProj}

Throughout the paper we use stereographic projection to visualise
objects in, and motions of, the three-sphere.  Recall that $\II$ is a
copy of $\RR^3$.  We define stereographic projection $\rho \from S^3 -
\{-1\} \to \II$ by
\[
\displaywidth=\parshapelength\numexpr\prevgraf+2\relax
\rho(q) = \frac{\sin(\alpha)}{1 + \cos(\alpha)} \cdot u
\]
with $q = e^{u\alpha}$ as in \reflem{TrigExistsUnique}.  See
\reffig{StereoProj} for a cross-sectional view.  Note that $\rho$
sends the south pole to the origin, fixes the equatorial sphere
$S^2_\II$ pointwise, and sends the north pole to ``infinity''.  The
one-parameter subgroup $e^{u\theta}$ is sent to the straight line in
the direction of $u$.  \reffig{Axes} shows the result of applying
stereographic projection to various great circles connecting $1, i, j,
k$ inside of $S^3$.

\begin{wrapfigure}[16]{r}{0.50\textwidth}
\centering
\labellist
\scriptsize\hair 2pt
\pinlabel $-j$ at 100 183
\pinlabel $i$ at 174 114
\pinlabel $1$ at 207 166
\pinlabel $k$ at 212 257
\pinlabel $-k$ at 214 47
\pinlabel $-i$ at 257 186
\pinlabel $j$ at 307 140
\endlabellist
\includegraphics[width=0.45\textwidth]{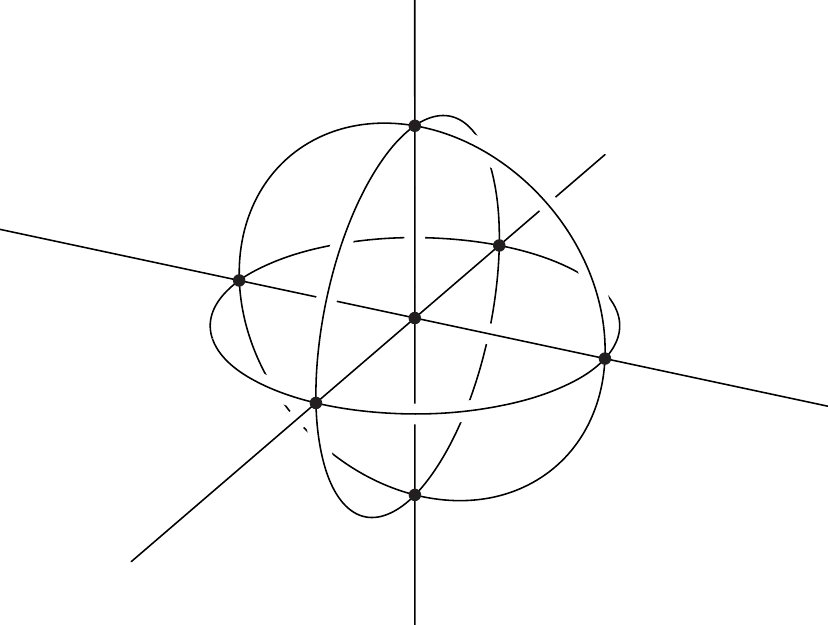}
\caption{Several great circles connecting $1, i, j, k$, shown after
  stereographic projection to $\mathbb{R}^3$.}
\label{Fig:Axes}
\end{wrapfigure}

\subsection{Mapping to $\SO(3)$}
\label{Sec:SO3}


Recall that $\SO(3)$ is the group of three-by-three orthogonal
matrices with determinant one.
Taking $\basis{i,j,k}$ as a basis for $\II$, we identify $\SO(3)$ with
$\Isom^+_0(\II)$, the group of orientation-preserving isometries of
$\II$ fixing the origin.

In \reflem{Isom} we discussed the left and right actions of $S^3$ on
$\HH$.  We combine these to obtain the \emph{twisted action}: for $q
\in S^3$ define $\phi_q \from \HH \to \HH$ by $\phi_q(p) = qpq^{-1}$.
The twisted action is again via isometries.  Note that the action
preserves $\RR \subset \HH$ pointwise.  Thus it preserves $\II \subset
\HH$ setwise.  We define $\psi_q \from \II \to \II$ by $\psi_q =
\phi_q|\II$ and deduce the following.

\begin{lemma}
\label{Lem:Homo}
The map $\psi_q$ is an element of $\SO(3)$.  The induced map $\psi
\from S^3 \to \SO(3)$ is a group homomorphism.
\end{lemma}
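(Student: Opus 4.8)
The plan is to obtain both assertions from the properties of the twisted action $\phi_q \from \HH \to \HH$, $\phi_q(p) = qpq^{-1}$, recorded just above, and then to restrict to $\II$. Recall that, as noted there, $\phi_q$ is a composition of a left and a right action of $S^3$ on $\HH$, that it is again an isometry, that it fixes $\RR \subset \HH$ pointwise, and hence that it preserves $\II = \RR^\perp$ setwise; so $\psi_q = \phi_q|\II$ is well defined.

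For the first statement, I would argue that $\psi_q$ lies in $\SO(3) = \Isom^+_0(\II)$ in three quick moves. First, $\phi_q$ is $\RR$--linear, being a composition of (left and right) multiplications, so its restriction $\psi_q$ to the linear subspace $\II$ is $\RR$--linear and fixes the origin. Second, by \reflem{Isom} both $p \mapsto qp$ and $p \mapsto pq^{-1}$ are isometries of $\HH$, hence so is $\phi_q$; a linear isometry restricts to a linear isometry on any invariant subspace, so $\psi_q \in \Ortho(3)$. Third, to upgrade ``orthogonal'' to ``special orthogonal'', I would use that $\phi_q$ is orientation--preserving on $\HH$ (again \reflem{Isom}), so $\det \phi_q = 1$; since $\phi_q$ is block diagonal for the splitting $\HH = \RR \oplus \II$, acting as the identity on the first summand and as $\psi_q$ on the second, we get $1 = \det \phi_q = \det \psi_q$. (Equivalently: $q \mapsto \det \psi_q$ is a continuous map from the connected space $S^3$ into $\{\pm 1\}$ taking the value $1$ at $q = 1$.)

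For the second statement, the key computation is that $q \mapsto \phi_q$ is itself multiplicative: for $p \in \HH$,
\[
\phi_{q_1 q_2}(p) = (q_1 q_2)\, p\, (q_1 q_2)^{-1} = q_1 \bigl( q_2\, p\, q_2^{-1} \bigr) q_1^{-1} = \phi_{q_1}\bigl( \phi_{q_2}(p) \bigr),
\]
using $(q_1 q_2)^{-1} = q_2^{-1} q_1^{-1}$, so $\phi_{q_1 q_2} = \phi_{q_1} \circ \phi_{q_2}$. Restricting to the invariant subspace $\II$ gives $\psi_{q_1 q_2} = \psi_{q_1} \circ \psi_{q_2}$, and since composition is the group law on $\Isom^+_0(\II) \cong \SO(3)$, the map $\psi$ is a group homomorphism. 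The whole argument is essentially bookkeeping; the only point that needs a moment's attention is the determinant computation in the first statement, ensuring we land in $\SO(3)$ rather than merely $\Ortho(3)$.
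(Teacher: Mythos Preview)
Your proof is correct and follows essentially the same approach as the paper: the isometry property gives $\psi_q \in \Ortho(3)$, connectedness of $S^3$ (which you offer as an alternative to your block--diagonal determinant computation) gives the ``special'' part, and associativity of $\HH$ gives the homomorphism identity $\psi_{q_1 q_2} = \psi_{q_1}\psi_{q_2}$. The paper's version is terser---it simply invokes connectedness for orientation and says ``associativity'' for the homomorphism---while you spell out the computations more fully, but the content is the same.
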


\begin{proof}
As remarked above, $\psi_q$ is an isometry of $\II$ that fixes the
origin.  Since $S^3$ is connected, the isometries $\psi_q$ and $\psi_1
= \Id$ have the same handedness.  Thus $\psi_q$ lies in $\SO(3)$.  The
equality $\psi_{qr} = \psi_q \psi_r$ follows from the associativity of
$\HH$.
\end{proof}

We need an explicit form of $\psi$, discovered independently by Gauss,
Rodrigues, Cayley, and Hamilton~\cite[page~21]{Stillwell01}.

\begin{lemma}
\label{Lem:Cover}
For $q = \pm e^{u\alpha}$ the isometry $\psi_q$ is a rotation of $\II$
about the direction $u$ through angle $2\alpha$.  Thus $\psi \from S^3
\to \SO(3)$ is a double cover.
\end{lemma}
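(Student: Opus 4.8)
The plan is to prove \reflem{Cover} by a direct computation that tracks the twisted action $\psi_q(p) = qp\Cong{q}$ on a well-chosen orthonormal basis of $\II$. Since $q = \pm e^{u\alpha}$ gives $\psi_q = \psi_{-q}$, we may assume $q = e^{u\alpha} = \cos\alpha + u\sin\alpha$. Using \reflem{TrigExistsUnique} we extend $u$ to a right-handed orthonormal basis $\basis{u,v,w}$ for $\II$, so by \reflem{Orthogonal} we have $u^2 = v^2 = w^2 = uvw = -1$, from which the cross-product-like relations $uv = w$, $vw = u$, $wu = v$ (and their anti-commuting partners $vu = -w$, etc.) follow. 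I would first record that $\Cong{q} = \cos\alpha - u\sin\alpha$, and that $q$ commutes with $u$ (both lie in the plane $\basis{1,u}$), so $\psi_q(u) = qu\Cong{q} = uq\Cong{q} = u$, confirming $u$ spans the rotation axis.

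Next I would compute $\psi_q(v)$ and $\psi_q(w)$ explicitly. Writing $c = \cos\alpha$, $s = \sin\alpha$, we have $qv = (c + us)v = cv + s(uv) = cv + sw$, and then $(qv)\Cong{q} = (cv + sw)(c - us) = c^2 v - cs(vu) + cs(wu) \ldots$ — carefully expanding using $vu = -w$, $wu = v$, $v u = -w$, one finds $\psi_q(v) = (c^2 - s^2)v + 2cs\, w = \cos(2\alpha)\, v + \sin(2\alpha)\, w$, using the double-angle identities. The symmetric computation gives $\psi_q(w) = -\sin(2\alpha)\, v + \cos(2\alpha)\, w$. Thus in the basis $\basis{u,v,w}$ the map $\psi_q$ is the block matrix fixing $u$ and rotating the $\basis{v,w}$ plane by angle $2\alpha$; this is exactly a rotation of $\II$ about the direction $u$ through angle $2\alpha$, as claimed.

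For the covering statement: \reflem{Homo} already tells us $\psi$ is a homomorphism into $\SO(3)$. Surjectivity follows because Euler's rotation theorem says every element of $\SO(3)$ is a rotation about some axis $u \in S^2_\II$ through some angle $\theta \in [0,2\pi)$, and the first part realizes this as $\psi_{e^{u\theta/2}}$. To identify the kernel: $\psi_q = \Id$ forces the rotation angle $2\alpha$ to be a multiple of $2\pi$, so $\alpha \in \{0,\pi\}$ (within a period), giving $q = e^{u\cdot 0} = 1$ or $q = e^{u\pi} = -1$; alternatively, $\psi_q = \Id$ means $qp = pq$ for all $p \in \II$, hence for all $p \in \HH$, so $q$ is central in $\HH$, forcing $q \in \RR \cap S^3 = \{\pm 1\}$. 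Since the kernel has order two and $\psi$ is surjective onto the connected Lie group $\SO(3)$, the map $\psi$ is a two-to-one covering homomorphism.

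The only mildly delicate point is the sign bookkeeping in expanding $(cv + sw)(c - us)$: one must consistently use the right-handed relations $uv = w$, $wu = v$, $vu = -w$ and not conflate $uw$ with $wu$. I do not anticipate a genuine obstacle — everything reduces to the quaternion multiplication table from \reflem{Orthogonal} together with the double-angle formulas $\cos^2\alpha - \sin^2\alpha = \cos 2\alpha$ and $2\sin\alpha\cos\alpha = \sin 2\alpha$ — but it is worth writing the two key expansions out in full so the reader can check the handedness, since a single transposed sign would turn the rotation by $2\alpha$ into a rotation by $-2\alpha$ (which, while still proving the covering claim, would contradict the stated orientation).
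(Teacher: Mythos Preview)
Your proposal is correct and follows essentially the same route as the paper: extend $u$ to a right-handed orthonormal basis $\basis{u,v,w}$, verify $\psi_q(u)=u$, compute $\psi_q(v)=\cos(2\alpha)\,v+\sin(2\alpha)\,w$ via the quaternion relations and double-angle identities, then deduce surjectivity from Euler's theorem and identify the kernel as $\{\pm 1\}$. Your additional computation of $\psi_q(w)$ and the alternative centrality argument for the kernel are fine but not needed, and the paper likewise defers the final ``covering map'' step to a topological exercise rather than invoking Lie-group generalities.
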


\begin{proof}
As a convenient piece of notation, we write $q = a + bu$ where $a =
\cos(\alpha)$ and $b = \sin(\alpha)$.  So $q^{-1} = a - bu$.  We check
that $\psi_q(u) = u$.
\begin{align*}
\psi_q(u) &= quq^{-1} = (a+bu)u(a-bu) \\
          &= (au-b)(a-bu) \\
          &= a^2u + ab - ab + b^2u \\
          &= u
\end{align*}
By Euler's rotation theorem, the line through $u$ is an axis for
$\psi_q$.
Now suppose that $v$ is orthogonal to $u$.  Let $w = uv$.  Thus
$\basis{u,v,w}$ is a right-handed orthonormal basis of $\II$.  We
compute $\psi_q(v)$.
\begin{align*}
\psi_q(v) &= (a + bu) v (a - bu) \\ 
          &= a^2 v - ab vu + ab uv - b^2 uvu \\
          &= a^2 v + 2ab w - b^2 uvu \\
          &= (a^2 - b^2)v + 2ab w \\
          &= \cos(2\alpha) v + \sin(2\alpha) w
\end{align*}
Thus $\psi_q$ rotates by the desired amount.  It follows from the
rotation theorem that $\psi$ is surjective.  Note that $\psi_q = \Id$
if and only if $\cos(2\alpha) = 1$ if and only if $\alpha \in
\{0,\pi\}$.  Thus $\psi$ is two-to-one.  We leave the proof that
$\psi$ is a covering map as a topological exercise.
\end{proof}

\begin{definition}
\label{Def:Binary}
If $\calG \subset \SO(3)$ is a group, then we call $\calG^* =
\psi^{-1}(\calG)$ the \emph{binary} group corresponding to $\calG$.
\end{definition}

\section{The $120$--cell}
\label{Sec:120}

It is time to construct the $120$--cell.  We could use a continuity
argument, as in \refsec{DodecaCon}, to build a spherical dodecahedron
in $S^3$ with all dihedral angles equal to $2\pi/3$.  The Poincar\'e
polyhedron theorem would then produce a tiling of $S^3$; regularity of
the tile leads to regularity of the tiling.  Taking the convex hull of
the vertices would give the $120$--cell.  However, computing the
number of cells would require computing the volume of the spherical
flag polytope, a highly non-trivial task.  Also, it is crucial for us
to see how the binary dodecahedral group $\calD^*$ lies inside of the
symmetry group of the $120$--cell.  Thus we give a more explicit
construction.  We refer to~\cite{Cheritat12, Stillwell01, Sullivan91}
as very useful commentaries on the $120$--cell.

\subsection{Outline of the construction}

\begin{wrapfigure}[15]{r}{0.40\textwidth}
\vspace{-22pt}
\centering 
\labellist
\scriptsize\hair 2pt
\pinlabel $v$ at 194 195
\pinlabel $i$ at 74 125
\pinlabel $j$ at 298 140
\pinlabel $k$ at 181 297
\pinlabel $f$ at 157 110
\endlabellist
\includegraphics[width=0.36\textwidth]{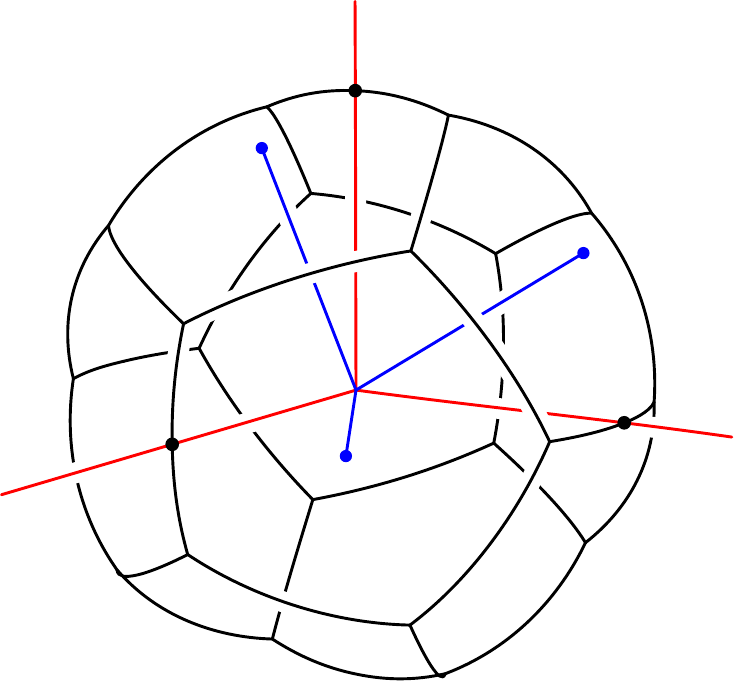}
\caption{The tiling $\calT_D$ can be positioned with one vertex at $v
  = \frac{1}{\sqrt{3}}(i + j + k)$ and with one face centre $f$ in the
  $ij$--plane.}
\label{Fig:DodecaPosition}
\end{wrapfigure}

Let $\calT_D \subset S^2_\II$ be the tiling constructed in
\refdef{Dodecahedron}.  Let $\calD \subset \SO(3)$ be its group of
orientation-preserving symmetries.  As in \refdef{Binary}, let
$\calD^* \subset S^3$ be the binary dodecahedral group.  From
\reflem{Tiling} deduce that $\calD^*$ has $120$ elements.  Let
$\calT_{120}$ be the tiling of $S^3$ by Voronoi domains about the
points of $\calD^*$.  We show that each domain is a regular spherical
dodecahedron.  Taking the convex hull of the vertices of $\calT_{120}$
yields the $120$--cell.  We now give the details.

\subsection{Positioning the dodecahedron}
\label{Sec:Pos}

As in \refdef{Dodecahedron}, let $\calT_D \subset \II \isom \RR^3$ be
the tiling of the unit sphere $S^2_\II$ by twelve spherical pentagons.
See \reffig{DodecaPosition} for a picture of the edges.  We
rotate $\calT_D$ to have one vertex at the point $v =
\frac{1}{\sqrt{3}}(i + j + k)$.  This done, the vertex
rotation about $v$ permutes the coordinate planes.  Pick $f \in
\calT_D$ to be one of the three face centres closest to $v$.  We wish
to rotate $\calT_D$, about the line through $0$ and $v$, to bring $f$
into the $ij$--plane.  To show that this is possible, and to find the
resulting coordinates of $f$, suppose $f = xi + yj$, where $x^2 + y^2
= 1$.  We now compute.
\begin{align*}
|v - f|^2 &= 1 - \frac{2}{\sqrt{3}}(x+y) + x^2 + y^2 \\
          &= 2 - \frac{2}{\sqrt{3}}(x+y).
\end{align*}
From \refcor{Edge} deduce that $x + y = \cot \pi/5$.  Solving the resulting
quadratic in $x$, and applying \refeqn{Trig1}, yields
\[
\{x, y\} = \left\{ \frac{ \cot \pi/5 \pm \cot 2\pi/5 }{2} \right\}.
\]
We choose the solution where $x > y$.  The resulting position of
$\calT_D$ is shown in \reffig{DodecaPosition}

Using the vertex rotation about $v$ deduce $f' = xj + yk$ and $f'' =
yi + xk$ are the other face centres of $\calT_D$ that are closest to
$v$.  We finish by noting, as indicated in \reffig{DodecaPosition},
there are three edges centres of $\calT_D$ at the points $i$, $j$, and
$k$.  As with \reflem{Edge}, verifying this is an exercise in
spherical trigonometry.

\subsection{Voronoi cells}

\begin{wrapfigure}[8]{r}{0.25\textwidth}
\vspace{-20pt}
\centering 
\includegraphics[width=0.20\textwidth]{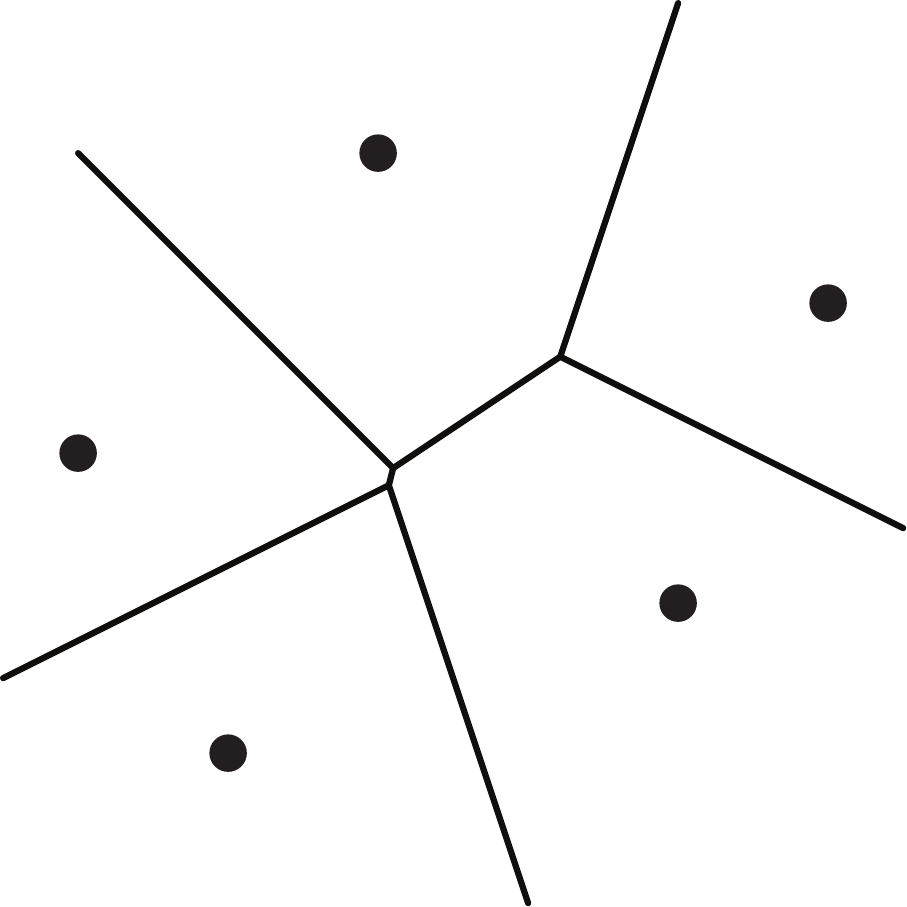}
\label{Fig:Voronoi}
\end{wrapfigure}

Suppose $V$ is a finite set of points in a metric space $X$.  The
\emph{Voronoi cell} about a point $q \in V$ is the set
\[
\displaywidth=\parshapelength\numexpr\prevgraf+2\relax
\Vor(q) = \{ r \in S^3 \st \mbox{for all $p \in V$, $d_X(q,r) \leq
  d_X(r,p)$} \}.
\]
An example with five points, in $\RR^2$, is shown to the right.  Let
$\calD \subset \SO(3)$ be the group of orientation-preserving
symmetries of the dodecahedron $D$, as given in \refsec{Pos}.  Let
$\calD^* \subset S^3$ be the corresponding binary dodecahedral group,
of $120$ elements.  Let $\calT_{120}$ be the tiling of the
three-sphere by the cells $\{\Vor(q) \st q \in \calD^*\}$.  Define
$\calC = \Sym(\calT_{120})$.

\begin{lemma}
\label{Lem:Sym120}
The left action of $\calD^*$ on $\calT_{120}$ is transitive on the
three-cells.  The twisted action of $\calD^*$ fixes $\Vor(1)$ setwise.
Both actions give homomorphisms of $\calD^*$ to $\calC$.  \qed
\end{lemma}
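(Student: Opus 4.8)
The plan is to verify the three assertions in turn, in each case reducing to facts already established about the binary dodecahedral group $\calD^*$ and the isometry lemmas of \refsec{Quatern}.

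First, for the left action: by \reflem{Isom}, left multiplication by any $q \in S^3$ is an orientation-preserving isometry of $S^3$, and such an isometry carries Voronoi cells to Voronoi cells whenever it preserves the defining point set. Since $\calD^*$ is a group, left multiplication by $q \in \calD^*$ permutes $\calD^*$, hence permutes $\{V_p : p \in \calD^*\}$; more precisely $q \cdot V_p = V_{qp}$, which one checks directly from the definition of $V_p$ using that $d_S$ is left-invariant. This shows the left action is by symmetries of $\calT_{120}$ and is transitive on three-cells (given $V_p$ and $V_{p'}$, take $q = p'p^{-1}$). Freeness follows because if $q V_1 = V_1$ then $q = q \cdot 1 \in q V_1 = V_1$, so $q$ is the closest point of $\calD^*$ to itself, forcing $q = 1$ --- more carefully, $qV_1 = V_q$, so $qV_1 = V_1$ forces $V_q = V_1$, and since the center $q$ of $V_q$ is the unique point of $\calD^*$ in the interior of $V_q$ (Voronoi cells about a finite point set have distinct interiors), we get $q = 1$. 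The map $\calD^* \to \calC$ is a homomorphism because left multiplication is: $(qr)\cdot x = q \cdot (r \cdot x)$.

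Second, for the twisted action fixing $V_1$: recall $\phi_q(p) = qpq^{-1}$. For $q \in \calD^* = \psi^{-1}(\calD)$, the induced map $\psi_q \in \calD$ is a symmetry of the tiling $\calT_D$ of $S^2_\II$, and hence --- this is the key point --- $\phi_q$ permutes $\calD^*$. Indeed $\phi_q$ restricted to $S^3$ is conjugation, which is a group automorphism of $S^3$; it carries $\calD^* = \psi^{-1}(\calD)$ to $\psi^{-1}(\psi_q \calD \psi_q^{-1}) = \psi^{-1}(\calD) = \calD^*$ since $\psi_q \in \calD$ normalizes (in fact lies in) $\calD$ and $\psi \circ \phi_q = \psi_q \circ \psi \circ (\cdot) \circ \psi_q^{-1}$... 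I should phrase this as: $\psi(\phi_q(p)) = \psi_q \psi_p \psi_q^{-1} \in \calD$ whenever $\psi_p \in \calD$, so $\phi_q$ preserves $\calD^*$ setwise. Since $\phi_q$ is an orientation-preserving isometry of $S^3$ (composition of a left and a right action, both isometries by \reflem{Isom}) fixing $1$ and preserving $\calD^*$, it preserves $\calT_{120}$ and fixes the cell $V_1$ about $1$. Again $q \mapsto \phi_q|_{S^3}$ is a homomorphism into $\calC$ by associativity, exactly as in \reflem{Homo}.

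The main obstacle, and the place requiring the most care, is the claim that left multiplication and the twisted action genuinely land in $\Sym(\calT_{120})$ --- that is, that an isometry permuting the point set $\calD^*$ automatically permutes the Voronoi tiling. This is standard but deserves a sentence: a Voronoi cell is defined purely in terms of distances to the point set, so any isometry $g$ with $g(\calD^*) = \calD^*$ satisfies $g(V_p) = V_{g(p)}$. The only genuinely substantive input is that conjugation by $q \in \calD^*$ preserves $\calD^*$, which is where the definition $\calD^* = \psi^{-1}(\calD)$ and the homomorphism property of $\psi$ (\reflem{Homo}, \reflem{Cover}) do the work. Everything else --- freeness, transitivity, the homomorphism property --- is formal group theory once these points are in place.
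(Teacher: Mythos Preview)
Your proof is correct; the paper states this lemma with a bare \qed\ and supplies no argument, so you are filling in details the authors deemed routine. One small simplification: you take a detour through $\psi$ to show that conjugation by $q \in \calD^*$ preserves $\calD^*$, but since $\calD^*$ is itself a group and $q \in \calD^*$, this is immediate --- conjugation by a group element is an inner automorphism. The rest (left-invariance of $d_S$ giving $qV_p = V_{qp}$, transitivity via $q = p'p^{-1}$, freeness from distinctness of Voronoi interiors, and the observation that isometries permuting the defining point set permute Voronoi cells) is exactly the expected verification.
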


\begin{lemma}
\label{Lem:Voronoi}
Each cell $\Vor(q)$ is a regular spherical dodecahedron. 
\end{lemma}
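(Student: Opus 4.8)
The plan is to exploit the left-multiplication action of $\calD^*$ on $S^3$, which by \reflem{Isom} is by orientation-preserving isometries, together with the fact (from \reflem{Sym120}) that this action is transitive on the cells of $\calT_{120}$. Since any isometry carries Voronoi cells to Voronoi cells, it suffices to prove the claim for the single cell $V_1$ based at the identity $1 \in S^3$. To analyze $V_1$, I would first use the twisted action $\psi \from S^3 \to \SO(3)$ of \reflem{Cover}: an element $q \in \calD^*$ fixes $1$ under left multiplication, but the twisted action $\phi_q(r) = qrq^{-1}$ also fixes $1$ and permutes $\calD^*$ (since $\calD^*$ is closed under conjugation, being $\psi^{-1}$ of a group), hence fixes $V_1$ setwise. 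Under $\psi$, the twisted action of $\calD^*$ on the tangent space at $1$ — which we can identify with $\II$ — realizes exactly the rotation group $\calD \subset \SO(3)$ of the dodecahedron. So $V_1$ is a spherical polytope whose symmetry group contains $\calD$, acting on directions at $1$ the way $\calD$ acts on $S^2_\II$.

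Next I would identify the facets and their combinatorics by finding the nearest neighbors of $1$ in $\calD^*$. The nearest neighbors determine the facets of $V_1$: each neighbor $q$ contributes the facet lying in the perpendicular bisector $\{r : d_S(1,r) = d_S(q,r)\}$. Using \refcor{ElementsOfD} and the explicit parameterization $q = \pm e^{u\alpha}$ with $d_S(1,q) = \alpha$ (from \reflem{TrigProd}), the elements of $\calD^*$ closest to $1$ should be the lifts of the twelve face-rotations through angle $2\pi/5$; these sit at spherical distance $\pi/5$ from $1$ (half the covering angle), and there are exactly twelve of them, matching the twelve facets of a dodecahedron. One must check that these are strictly closer than the lifts of the other rotations (the vertex rotations through $2\pi/3$ lift to distance $\pi/3$, the edge rotations through $\pi$ to distance $\pi/2$, the order-$5$ rotations through $4\pi/5$ to distance $2\pi/5$) and closer than $-1$ itself, which sits at distance $\pi$; this is a short comparison of the numbers $\pi/5 < \pi/3 < 2\pi/5 < \pi/2 < \pi$. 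The twelve nearest-neighbor directions, viewed in $\II$ via the tangent identification, are precisely the twelve face-center directions of $\calT_D$, arranged with the symmetry of $\calD$.

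With the facet directions and the symmetry group in hand, the argument concludes as follows. The cell $V_1$ is the intersection of the twelve half-spaces (spherical caps) bounded by the perpendicular bisectors to the twelve nearest neighbors. Its symmetry group contains $\calD$, which acts transitively on these twelve directions; since $\calD$ has order $60 = 12 \cdot 5$, the stabilizer of a facet has order (at least) $5$, forcing each facet to be a regular spherical pentagon and forcing $\calD$ to act transitively on the flags of $V_1$. Combined with the transitivity of $\calD$ on facets and the regularity already built into $\calT_D$, this shows $V_1$ is a regular spherical dodecahedron in the sense of \refdef{Reg} (applied to spherical polytopes). Finally, left translation by $\calD^*$ carries $V_1$ isometrically onto every other $V_q$, so all $120$ cells are regular spherical dodecahedra.

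I expect the main obstacle to be the nearest-neighbor computation: one must be sure that no element of $\calD^*$ other than the twelve lifts of order-$5$ face rotations lies at distance $\pi/5$ or closer from $1$, and in particular that the twelve genuinely bound facets rather than being cut off by the others. This requires knowing the full distance spectrum of $\calD^*$ from $1$, which in turn comes from \refcor{ElementsOfD} via the $\psi_q = $ (rotation by $2\alpha$) dictionary of \reflem{Cover} — so the work is really in carefully matching each conjugacy class of rotations in $\calD$ to the corresponding spherical radius in $S^3$, and then checking that the twelve closest points are in "general position" enough that their bisector caps actually carve out a dodecahedron rather than something with fewer facets.
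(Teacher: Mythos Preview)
Your overall strategy matches the paper's: reduce to $V_1$ via the left action, use the twisted action to get the $\calD$--symmetry, and identify the twelve nearest neighbours as the lifts of the $2\pi/5$ face rotations at spherical distance $\pi/5$. That part is fine.

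The gap is in your third paragraph, where you assert that ``the cell $V_1$ is the intersection of the twelve half-spaces bounded by the perpendicular bisectors to the twelve nearest neighbours.'' This does not follow from the inequality $\pi/5 < \pi/3 < 2\pi/5 < \pi/2$. Knowing that the remaining $106$ points of $\calD^*$ are \emph{farther} from $1$ does not, by itself, prevent their bisecting great spheres from slicing into the dodecahedron $U$ cut out by the twelve nearest bisectors. In a general Voronoi decomposition the second-nearest shell can and does contribute facets; you must rule this out here by a quantitative check. The paper does exactly this: it sets $U$ equal to the intersection of the twelve nearest half-spaces (so $V_1 \subseteq U$ automatically), and then proves the Claim $U = V_1$ by locating a vertex $t$ of $U$ (as the spherical centre of the regular tetrahedron on $1,q,q',q''$), computing its real part to be $\tfrac{1}{2}\sqrt{1 + 3\cos\pi/5}$, and checking that this exceeds $\cos\pi/6$. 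Since $t$ is the point of $U$ closest to the vertex-rotation lift $p$ at distance $\pi/3$, this inequality shows $S(p)$ misses $U$. The remaining shells are handled similarly.

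Your final paragraph misidentifies the obstacle. The danger is not that the twelve bisectors might carve out something with \emph{fewer} than twelve facets (the $\calD$--symmetry you already invoked makes that impossible: the twelve directions form a single $\calD$--orbit, so either all twelve contribute facets or none do). The danger is that $V_1$ might have \emph{more} than twelve facets, coming from the twenty points at distance $\pi/3$. Closing the gap requires comparing the circumradius of $U$ to $\pi/6$, and that comparison is the actual content of the lemma.
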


\begin{proof}
Let $1$ be the identity of $S^3$.  By \reflem{Sym120} it suffices to
prove the lemma for $\Vor(1)$.  For any $q \in \calD^*$, not equal to
$1$, we define $\Sphere(q) \subset S^3$ to be the great sphere
equidistant from $1$ and $q$.  Note that $\Vor(1)$ is obtained by
cutting $S^3$ along $\Sphere(q)$, for all $q\neq 1$, and taking the closure
of the component that contains $1$.

By \refcor{ElementsOfD} and by Lemmas~\ref{Lem:Cover}
and~\ref{Lem:TrigProd} there are twelve quaternions
$\{q_i\}_{i=1}^{12}$ in $\calD^*$ that are distance $\pi/5$ from $1$.
Define $U$ by cutting $S^3$ along the spheres $\Sphere(q_i)$ only, and
then taking the closure of the component containing $1$.  By
\reflem{Sym120} the twisted action of $\calD^*$ preserves $\{q_i\}$
setwise; we deduce $U$ is a regular spherical dodecahedron.  Also, $U$
contains $\Vor(1)$.

\begin{claim*}
$U = \Vor(1)$.
\end{claim*}

\begin{proof}
We must show, for every $p \in \calD^* - \{q_i\}$, that the sphere
$\Sphere(p)$ misses $U$.  We will only do this for a single lift of a
vertex rotation of $D$, leaving the other cases as exercises.


\begin{figure}[htbp]
\centering 

\includegraphics[width=0.8\textwidth]{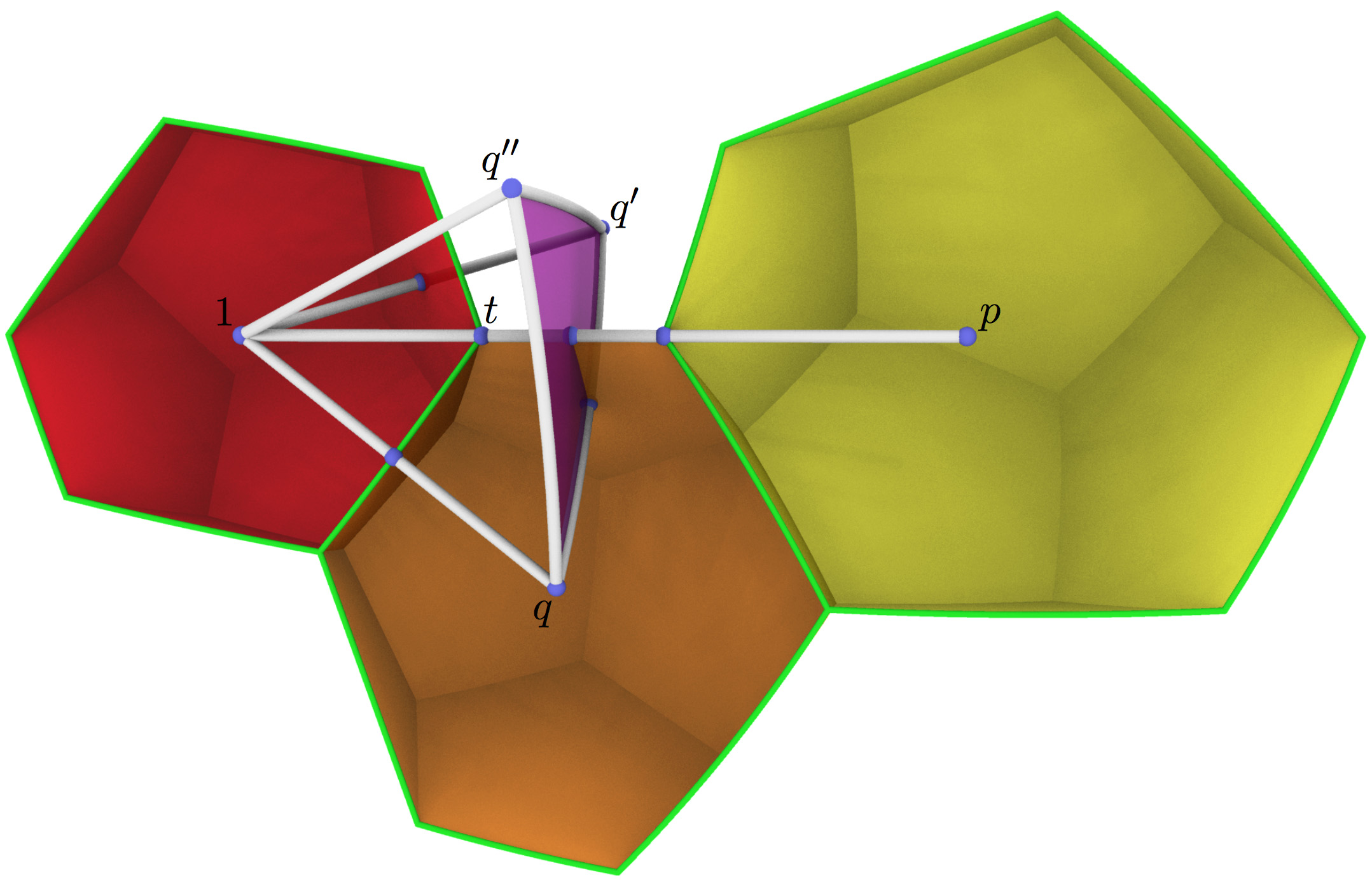}
\caption{Three dodecahedral cells of the tiling $\calT_{120}$, each
  chopped in half.  The triangle meeting the central cell has vertices
  at $q$, $q'$, and $q''$; also it bisects the geodesic connecting $1$
  and $p$.  The point $t$ is equidistant from $1$, $q$, $q'$, and
  $q''$.}
\label{Fig:ProveVoroni}
\end{figure}

Take $v$, $f$, $f'$, and $f''$ as defined in \refsec{Pos}.  Fix the
following quaternions in $\calD^*$
\begin{align*}
p &= \cos \pi/3 + v \cdot \sin \pi/3, \\
q &= \cos \pi/5 + f \cdot \sin \pi/5 
\end{align*}
and define $q'$ and $q''$ similarly with respect to $f'$ and $f''$.
Thus $p$ is the desired lift of the vertex rotation about $v$.  Note
$q$, $q'$, and $q''$ are lifts of face rotations.  By
\reflem{TrigProd} the elements $q$, $q'$, and $q''$ are all distance
$\pi/5$ from $1$ in $S^3$.  We compute
\begin{align*}
(q^{-1}) \cdot q' 
  &= (\cos \pi/5 - f \cdot \sin \pi/5) (\cos \pi/5 + f' \cdot \sin \pi/5) \\
  &= \cos^2 \pi/5 + (- f + f') \cos \pi/5 \sin \pi/5 - f f' \cdot \sin^2 \pi/5.
\end{align*}
Expanding the product $f f'$ and applying \refeqn{Trig2}, we find the
real part of $(q^{-1}) \cdot q'$ is also equal to $\cos \pi/5$.  Since
the twisted action by $p$ permutes $q$, $q'$, and $q''$ cyclically,
deduce that $1$, $q$, $q'$, and $q''$ are the vertices of a regular
spherical tetrahedron, $T$.  Let $t = \centerpt(T)$ be the spherical
centre of $T$ -- the radial projection of the Euclidean centre of $T$.
It follows that $t$ is a vertex of $U$.  We claim $t$ is the point of
$U$ closest to $p$.
Note the real part of $t$ is
$\frac{1}{2} \sqrt{1 + 3 \cos\pi/5}$.  Since this is greater than $\cos \pi/6$ 
deduce that $\Sphere(p)$ does not cut $t$ off of $U$.  Thus
$\Sphere(p)$ misses $U$, as desired.
\end{proof}
This completes the proof of \reflem{Voronoi}.
\end{proof}

\begin{definition}
\label{Def:120}
The \emph{$120$--cell} $C$ is the convex hull, taken in $\HH$, of the
vertices of $\calT_{120}$.
\end{definition}

This completes the construction of the $120$--cell.  

\begin{figure}[htbp]
\centering 
\includegraphics[width = 0.70\textwidth]{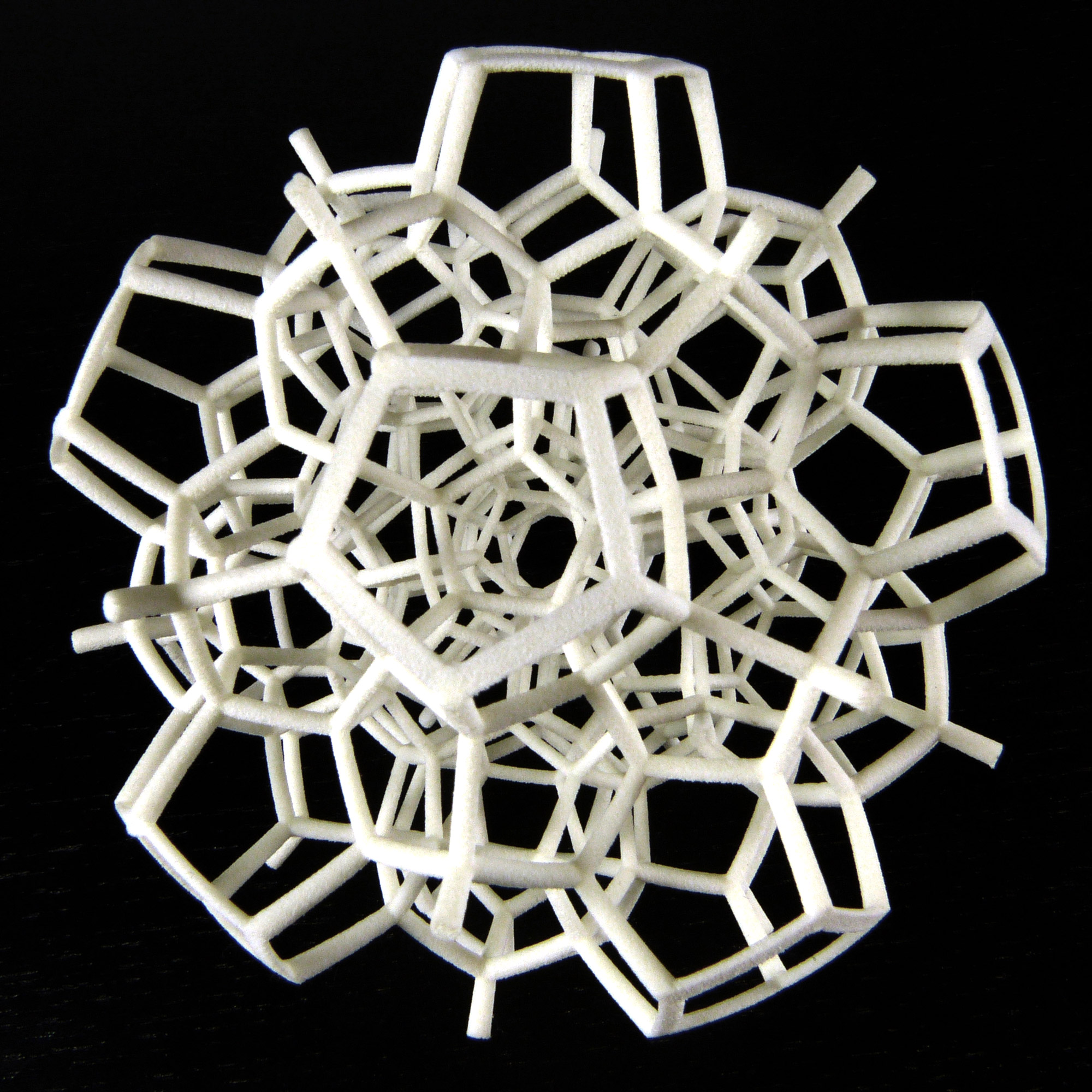}
\caption{One-half of the one-skeleton of the tiling $\calT_{120}$.
  This is the half nearest to the south pole, after cell-centred
  stereographic projection to $\mathbb{R}^3$.  A virtual
  three-dimensional model is available at
  \url{https://skfb.ly/EsTp}.  See also~\cite[colour
    plate]{Sullivan91}.}
\label{Fig:120}
\end{figure}

\begin{theorem}
\label{Thm:120}
The $120$--cell $C$ is a regular polytope.
\end{theorem}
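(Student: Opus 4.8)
The plan is to verify Definition~\ref{Def:Reg} directly: given two flags $F$ and $G$ of the $120$--cell $C$, I must produce a symmetry $\phi \in \Sym(C)$ carrying $F$ to $G$. Since $C$ is the convex hull of the vertices of $\calT_{120}$ lying on the circumscribing sphere $S^3$, the isometries of $\HH$ preserving $\calT_{120}$ are exactly the symmetries of $C$; so it is equivalent to show that $\calC = \Sym(\calT_{120})$ acts transitively on flags of the tiling. A flag of $\calT_{120}$ consists of a vertex $v_0 \subset$ an edge $v_1 \subset$ a $2$--face $v_2 \subset$ a cell $v_3$.

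First I would reduce to a single cell. By Lemma~\ref{Lem:Sym120}, the left action of $\calD^*$ on $\calT_{120}$ is transitive on the $120$ three-cells, so I may move $v_3$ to be the base cell $V_1$; it then suffices to map any flag of $\calT_{120}$ whose top cell is $V_1$ to a fixed reference flag in $V_1$. Now Lemma~\ref{Lem:Voronoi} tells us that $V_1$ is a regular spherical dodecahedron, so by regularity of the dodecahedron (Definition~\ref{Def:Dodecahedron} and Lemma~\ref{Lem:Tiling}) its full symmetry group has order $120$ and acts transitively on the flags of $V_1$ — equivalently on flags of the spherical tiling $\calT_D$ — giving $120$ flags of $\calT_{120}$ with top cell $V_1$. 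The remaining point is that each such symmetry of the cell $V_1$ extends to a symmetry of the whole tiling $\calT_{120}$. This is exactly what the twisted action provides: by Lemma~\ref{Lem:Sym120} the twisted action of $\calD^*$ fixes $V_1$ setwise, and by Lemma~\ref{Lem:Cover} together with Corollary~\ref{Cor:ElementsOfD} the twisted action realizes all $60$ orientation-preserving symmetries of the dodecahedron on $V_1$. To also obtain the orientation-reversing symmetries, I would use the antipodal map $q \mapsto -q$ of $S^3$, or conjugation $q \mapsto \Cong{q}$, which is an orientation-reversing isometry of $\HH$ preserving $\calD^*$ (hence $\calT_{120}$) and acting as a reflection on $V_1$; composing with twisted actions yields all $120$ symmetries of $V_1$ inside $\calC$.

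Combining the two steps: given flags $F$ and $G$, pick $g_1 \in \calD^*$ with left-translation $L_{g_1}$ sending the top cell of $F$ to $V_1$, and likewise $g_2$ for $G$; then $L_{g_1}(F)$ and $L_{g_2}(G)$ are flags with top cell $V_1$, and by transitivity of $\Sym(V_1) \leq \calC$ on such flags there is $h \in \calC$ with $h\, L_{g_1}(F) = L_{g_2}(G)$; thus $\phi = L_{g_2}^{-1}\, h\, L_{g_1}$ is the required symmetry. I expect the only genuine subtlety to be confirming that a symmetry of the single cell $V_1$ extends to (indeed is the restriction of) a global symmetry of $\calT_{120}$ — i.e. that $\Sym(V_1)$ embeds in $\calC$ rather than just $\Sym^+(V_1)$ — which is handled by the twisted-action and antipodal-map observations above; everything else is bookkeeping with the group actions already set up in Lemmas~\ref{Lem:Sym120}, \ref{Lem:Voronoi} and the dodecahedron count in Lemma~\ref{Lem:Tiling}.
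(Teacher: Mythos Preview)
Your argument is essentially the paper's own proof: left action of $\calD^*$ to move the top cell to $V_1$, twisted action to realize the rotational symmetries of $V_1$, and conjugation $q \mapsto \Cong{q}$ to obtain an orientation-reversing symmetry. One correction: the antipodal map $q \mapsto -q$ does \emph{not} work as an alternative here---it is orientation-\emph{preserving} on $S^3$ (it is $-\Id$ on $\HH \cong \RR^4$, with determinant $(+1)$), and it sends $V_1$ to $V_{-1}$ rather than fixing $V_1$; only the conjugation map does the job, and that is exactly what the paper uses.
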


\begin{proof}
We must show that the group $\calC = \Sym(\calT_{120})$ acts
transitively on the flags of $C$.  Now, the flags of $C$ are
four-simplices with one vertex at the origin.  These are in one-to-one
correspondence with the 14,400 spherical flag tetrahedra of $\calT_{120}$.  
It suffices to fix a right-handed spherical flag tetrahedron $T$ of
$\Vor(1)$ and to prove that any other tetrahedron $T'$ in
$\calT_{120}$ can be taken to $T$ by an element of $\calC$.  By
\reflem{Sym120} we may use the left action of $\calD^*$ to transport
$T'$ into $\Vor(1)$.  Now, if $T'$ is also right handed then we may
use the twisted action of $\calD^*$ to send $T'$ to $T$.  There are
several ways to deal with left-handed spherical flags; we resort to a
simple trick.  The conjugation map
\[
a + bi + cj + dk \mapsto a - bi - cj - dk
\]
is the product of three reflections, so is orientation reversing in
$\HH$.  It preserves $S^3$ and is again orientation reversing there.
Since $\calD^*$ is a group of quaternions, it is closed under
conjugation.  Since the tiling $\calT_{120}$ is metrically defined in
terms of $\calD^*$, it is also invariant under conjugation.  This
reverses the handedness of flags, and we are done.
\end{proof}

\begin{corollary}
\label{Cor:Dihedral}
The spherical dodecahedra of $\calT_{120}$ have dihedral angle $2\pi/3$.
\end{corollary}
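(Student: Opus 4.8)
The plan is to combine \reflem{Voronoi} with the fact that $\calT_{120}$ tiles the $3$--manifold $S^3$. By \reflem{Voronoi} every cell of $\calT_{120}$ is a \emph{regular} spherical dodecahedron, so all twelve of its dihedral angles are equal; denote this common angle by $\delta$. Around any edge $e$ of $\calT_{120}$ the cells containing $e$ are arranged cyclically, each glued to the next along a shared pentagonal face, and since $S^3$ is a manifold their dihedral angles at $e$ sum to $2\pi$. As all cells are isometric regular spherical dodecahedra, each contributes exactly $\delta$; hence if $n$ cells meet along $e$ then $n\delta = 2\pi$. So it suffices to prove $n = 3$, independently of $e$.

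First I would observe that the local combinatorics is the same at every vertex and every edge of $\calT_{120}$: by \reflem{Sym120} the left action of $\calD^*$ is transitive on the three-cells, so every vertex (resp.\ edge) of the tiling is carried by an isometry to a vertex (resp.\ edge) of $V_1$, and it is enough to understand $V_1$. The proof of \reflem{Voronoi} already does most of this work: it shows that each vertex of $V_1$ is the spherical center $t = \centerpt(T)$ of a regular spherical tetrahedron $T$ whose four vertices $1, q, q', q''$ all lie in $\calD^*$, and --- this is where the estimate that the real part of $t$ exceeds $\cos \pi/6$ is used --- that $t$ is strictly closer to these four points than to any other element of $\calD^*$. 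Consequently exactly four cells, namely $V_1, V_q, V_{q'}, V_{q''}$, meet at each vertex of $\calT_{120}$.

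From this I would extract $n = 3$ by a link argument. Fix a vertex $v$ and take its link: a small $2$--sphere which meets each of the four cells at $v$ in that cell's vertex figure, i.e.\ in a spherical triangle (a dodecahedron has three edges and three faces at each vertex). Thus the link is a triangulation of $S^2$ with exactly four triangular faces; by Euler's formula such a triangulation has six edges and four vertices, so it is the boundary complex of a tetrahedron, in which every vertex has degree three. But a vertex of the link corresponds to an edge of $\calT_{120}$ through $v$, and its degree equals the number of cells around that edge, namely $n$. Hence $n = 3$, and therefore $\delta = 2\pi/3$, which is the assertion of the corollary.

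The step I expect to be the real content is the input borrowed from the proof of \reflem{Voronoi}: that the tetrahedron center $t$ is genuinely a vertex of $V_1$ and is equidistant from \emph{precisely} four elements of $\calD^*$, with no further coincidences among the bisecting spheres $S(p)$. Everything afterward --- the manifold angle-sum relation and the $K_4$ link computation --- is formal. An alternative to the vertex-figure argument would be to pin down the edges of $V_1$ in coordinates and exhibit, for one of them, the three quaternions whose Voronoi cells share it; this works but is more laborious and less illuminating, so I would fall back on it only if the equidistance bookkeeping above proves awkward.
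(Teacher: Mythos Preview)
Your argument is correct, but it takes a different route from the paper's. The paper works directly with an edge rather than passing through a vertex: it fixes three mutually equidistant elements $1, q, q' \in \calD^*$ (a face of the tetrahedron you invoke), observes that the center $c$ of the spherical triangle $T$ they span is equidistant from all three, and then uses a reflection symmetry of $\calT_{120}$ fixing $T$ pointwise to conclude that $V_1$, $V_q$, $V_{q'}$ share an edge through $c$, perpendicular to $T$. Three congruent regular dodecahedra around that edge forces each dihedral angle to be $2\pi/3$. Your path --- four cells at a vertex, hence a tetrahedral link of four triangles, hence valence three at every link-vertex, hence three cells per edge --- reaches the same endpoint via vertex combinatorics and an Euler count. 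The paper's argument is shorter and leans on the symmetry already established in \refthm{120}; your link argument is a bit more work but is a reusable template that would apply verbatim to any regular spherical tiling once the number of cells at a vertex is known.
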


\begin{proof}
It suffices to check this for $\Vor(1)$, the Voronoi cell about
$1$.  With notation as in the proof of \reflem{Voronoi}: let $1$, $q$,
and $q'$ be elements of $\calD^*$, all at distance $\pi/5$ from each
other.  Let $R$ be the regular spherical triangle having $1$, $q$, and
$q'$ as vertices.  The centre $c = \centerpt(R)$ is equidistant from
the vertices of $R$.  Also, there is a reflection symmetry of
$\calT_{120}$ that fixes $R$ pointwise.
It follows that $\Vor(1)$, $\Vor(q)$, and $\Vor(q')$ share an edge and
this edge is perpendicular to $R$.  As all of these cells are
isometric regular spherical dodecahedra, the corollary follows.
\end{proof}

\begin{remark}
\label{Rem:24Cell}
Note the $24$--cell can be constructed in the same way as the
$120$--cell, by starting with the regular tetrahedron in place of the
dodecahedron.  The symmetries of the cube (equivalently, octahedron)
do not give rise to a regular four-dimensional polytope; the reason
can be traced to the failure of the inequality at the heart of
\reflem{Voronoi}.
\end{remark}




\section{Combinatorics of the $120$--cell}
\label{Sec:Comb}

With the $120$--cell in hand, we turn to the combinatorics of
$\calT_{120}$, the spherical $120$--cell.  By \reflem{Voronoi} and
\refcor{Dihedral}, the cells of $\calT_{120}$ are regular spherical
dodecahedra with dihedral angle $2\pi/3$.

\subsection{Layers of dodecahedra}
\label{Sec:Layers}

Recall that the centres of the cells of $\calT_{120}$ are the elements
of the binary dodecahedral group $\calD^*$.  Recall also that
\refcor{ElementsOfD} lists the elements of $\calD$, ordered by their
angle of rotation.  We deduce that the cells of $\calT_{120}$ divide
into spherical layers, ordered by their distance from the identity
element in $S^3$.  According to our conventions, the identity lies at
the south pole of $S^3$.  \reffig{120_layers} displays the
stereographic projections of the first five layers, expanding from the
south pole out to the equatorial great sphere.  The next four layers,
nesting down to the north pole, are not shown.


\begin{figure}[htbp]
\centering 
\hspace{-1cm}
\subfloat[$0$]
{
\includegraphics[width=0.18\textwidth]{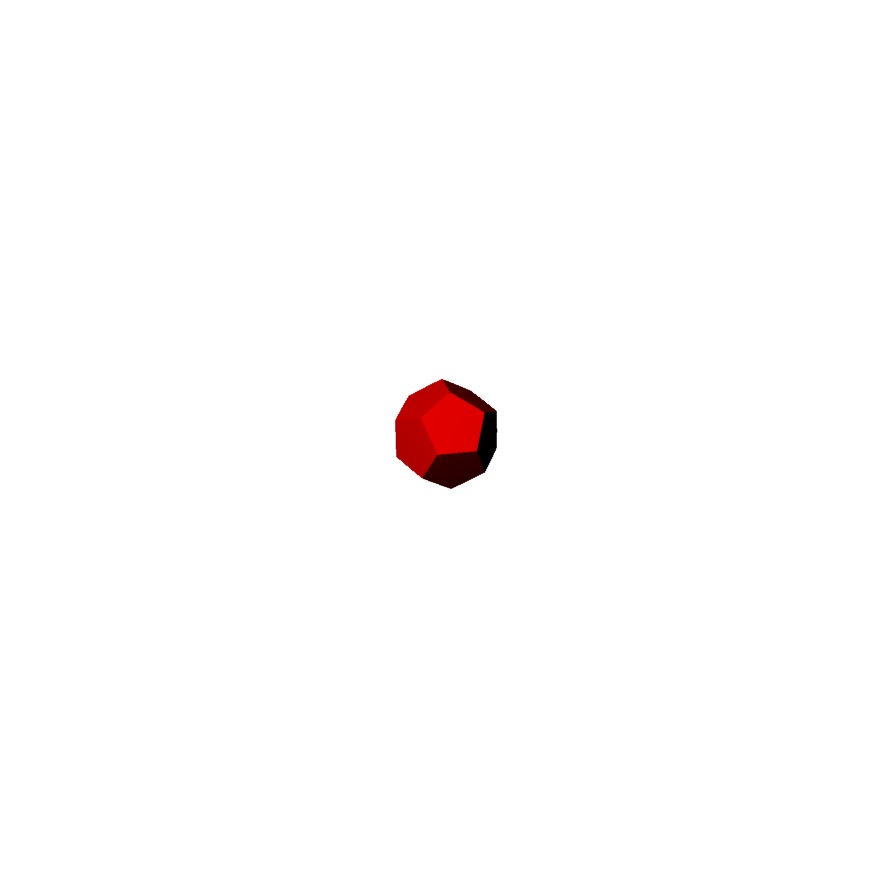}
\label{Fig:120-cell layer 1}
} 
\hspace{-0.5cm}
\subfloat[$\pi/5$]
{
\includegraphics[width=0.18\textwidth]{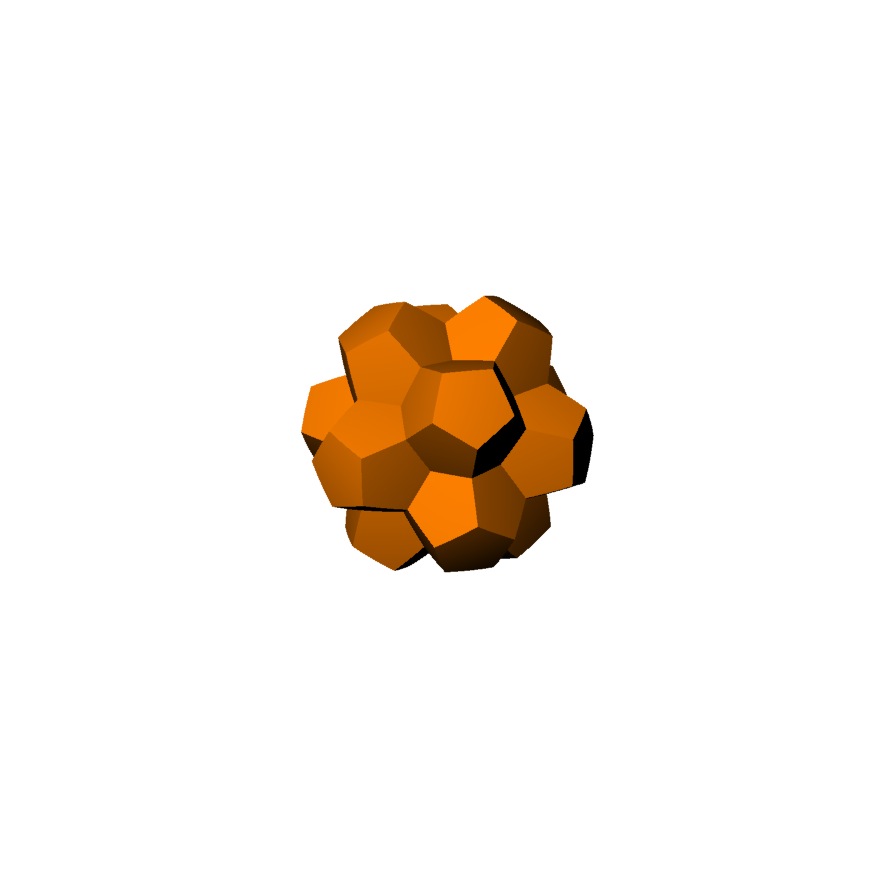}
\label{Fig:120-cell layer 2}
}
\hspace{-0.2cm}
\subfloat[$\pi/3$]
{
\includegraphics[width=0.18\textwidth]{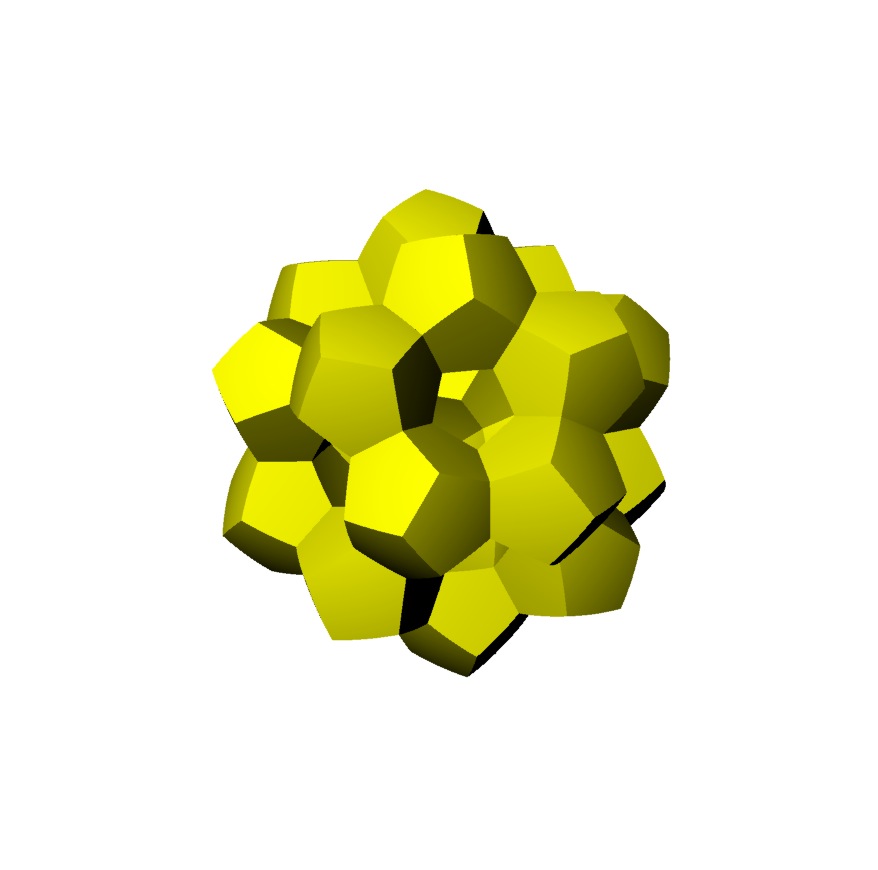}
\label{Fig:120-cell layer 3}
}
\subfloat[$2\pi/5$]
{
\includegraphics[width=0.18\textwidth]{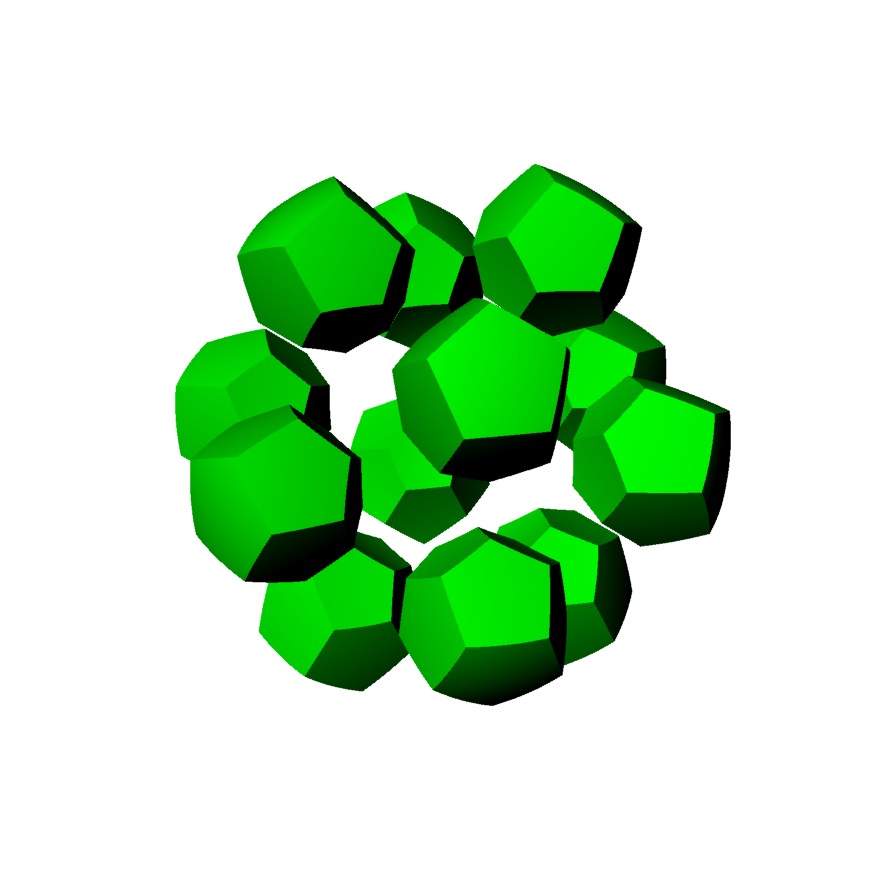}
\label{Fig:120-cell layer 4}
}
\hspace{0.2cm}
\subfloat[$\pi/2$]
{
\includegraphics[width=0.18\textwidth]{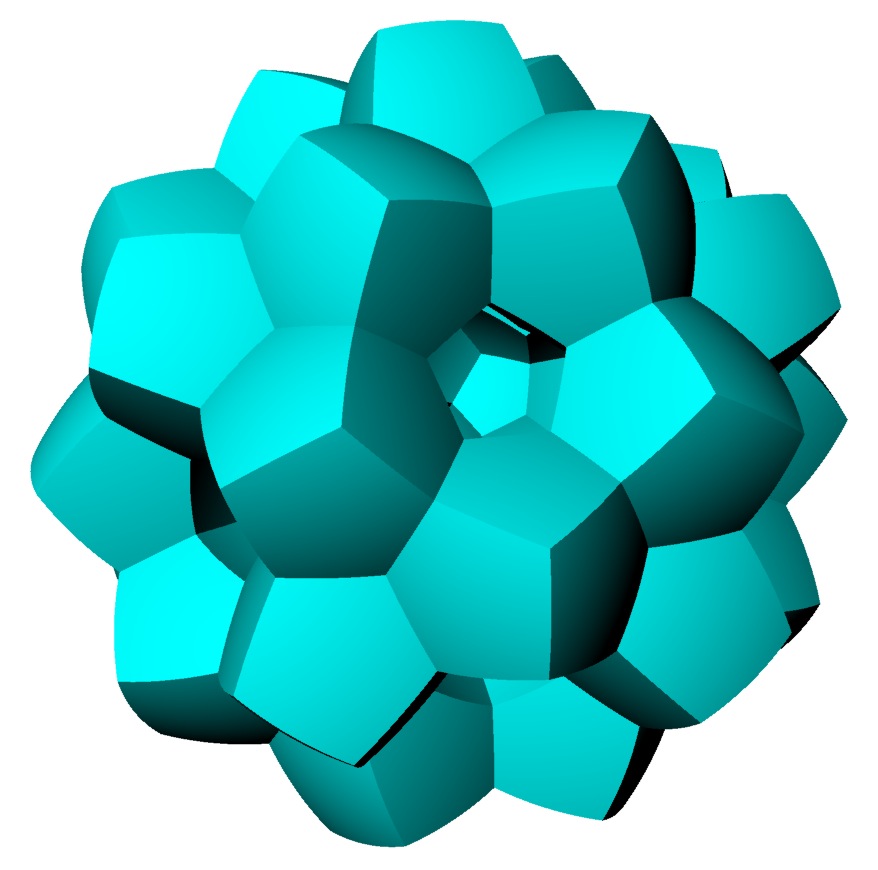}
\label{Fig:120-cell layer 5}
}
\caption{The five layers in the southern hemisphere, ordered by their
  spherical distance from the south pole.  The colours of the cells
  follow the convention of \reffig{DodecWithRotationArrows}.}
\label{Fig:120_layers}
\end{figure}

In \refprop{Ring}, below, for each layer $L$ we list the spherical
distance between $1$ and the cell-centres of $L$, the type of the
covered rotation in $\SO(3)$, the number of cells in $L$, as well as
other data.  See also~\cite[page~176]{Sommerville58}.




\subsection{Rings of dodecahedra}
\label{Sec:Rings}

\begin{figure}[htbp]
\centering 
\subfloat[]
{
\includegraphics[width=0.15\textwidth]{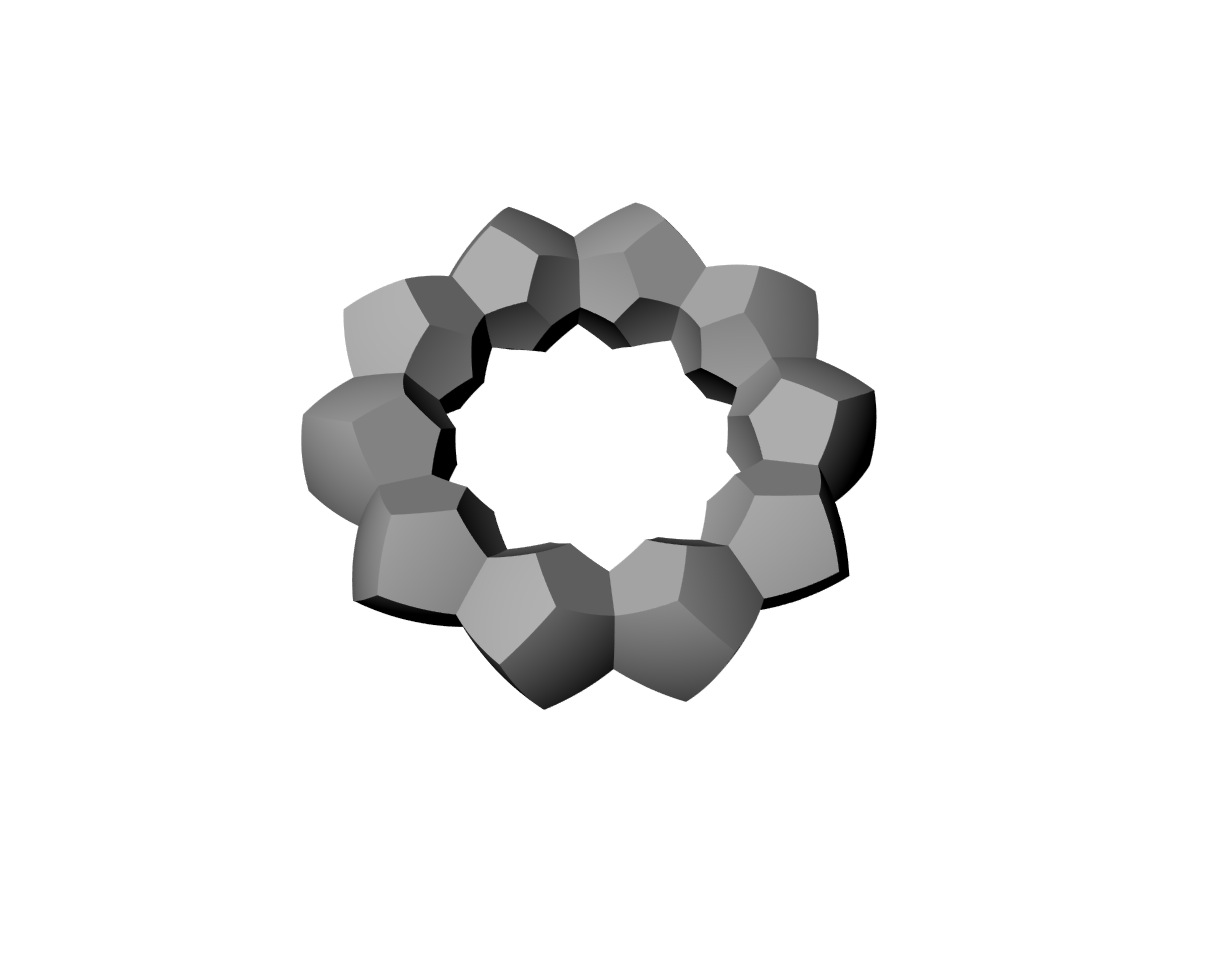}
\label{Fig:SolidDodecRings0}
} 
\subfloat[]
{
\includegraphics[width=0.15\textwidth]{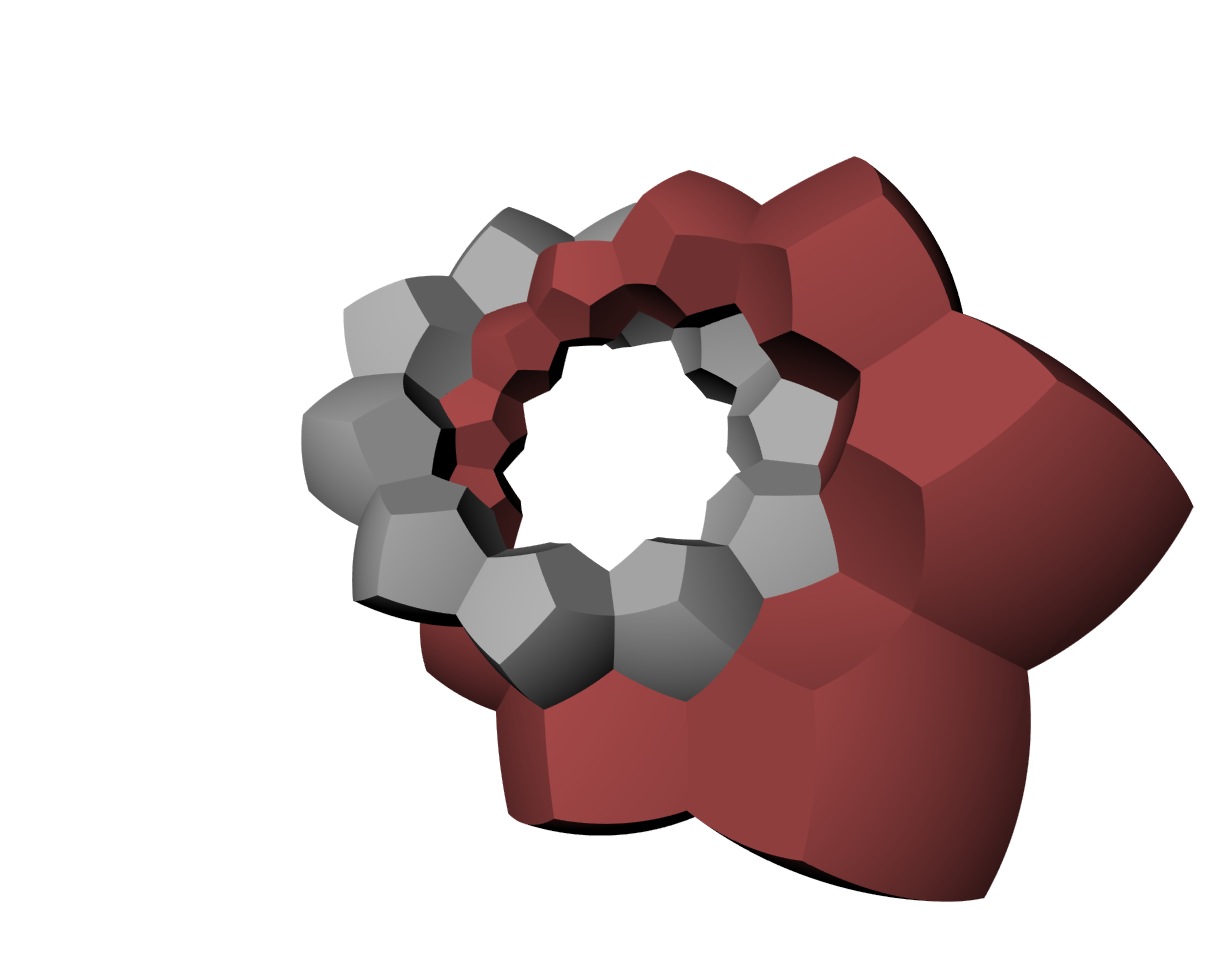}
\label{Fig:SolidDodecRings1}
} 
\subfloat[]
{
\includegraphics[width=0.15\textwidth]{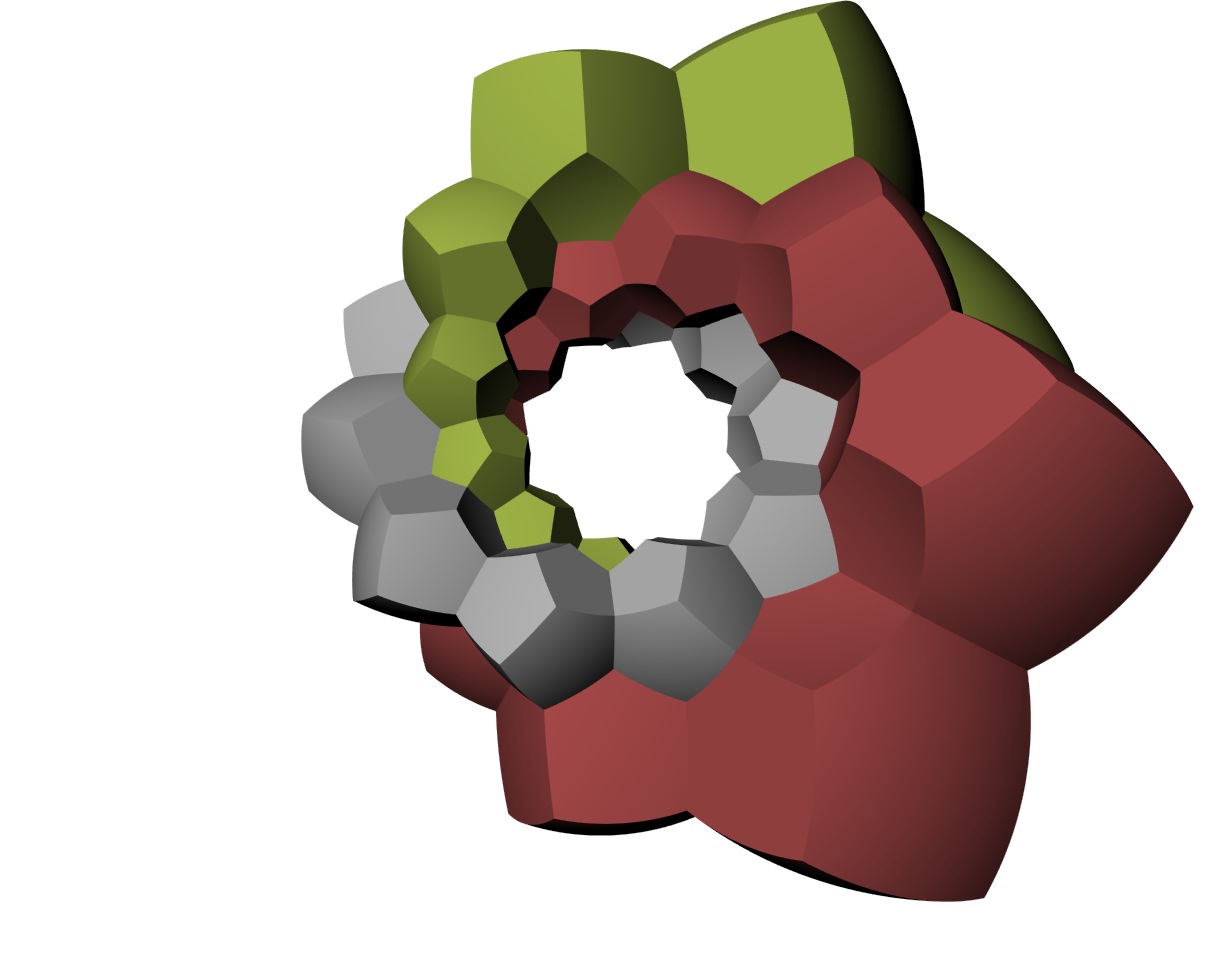}
\label{Fig:SolidDodecRings2}
}
\subfloat[]
{
\includegraphics[width=0.15\textwidth]{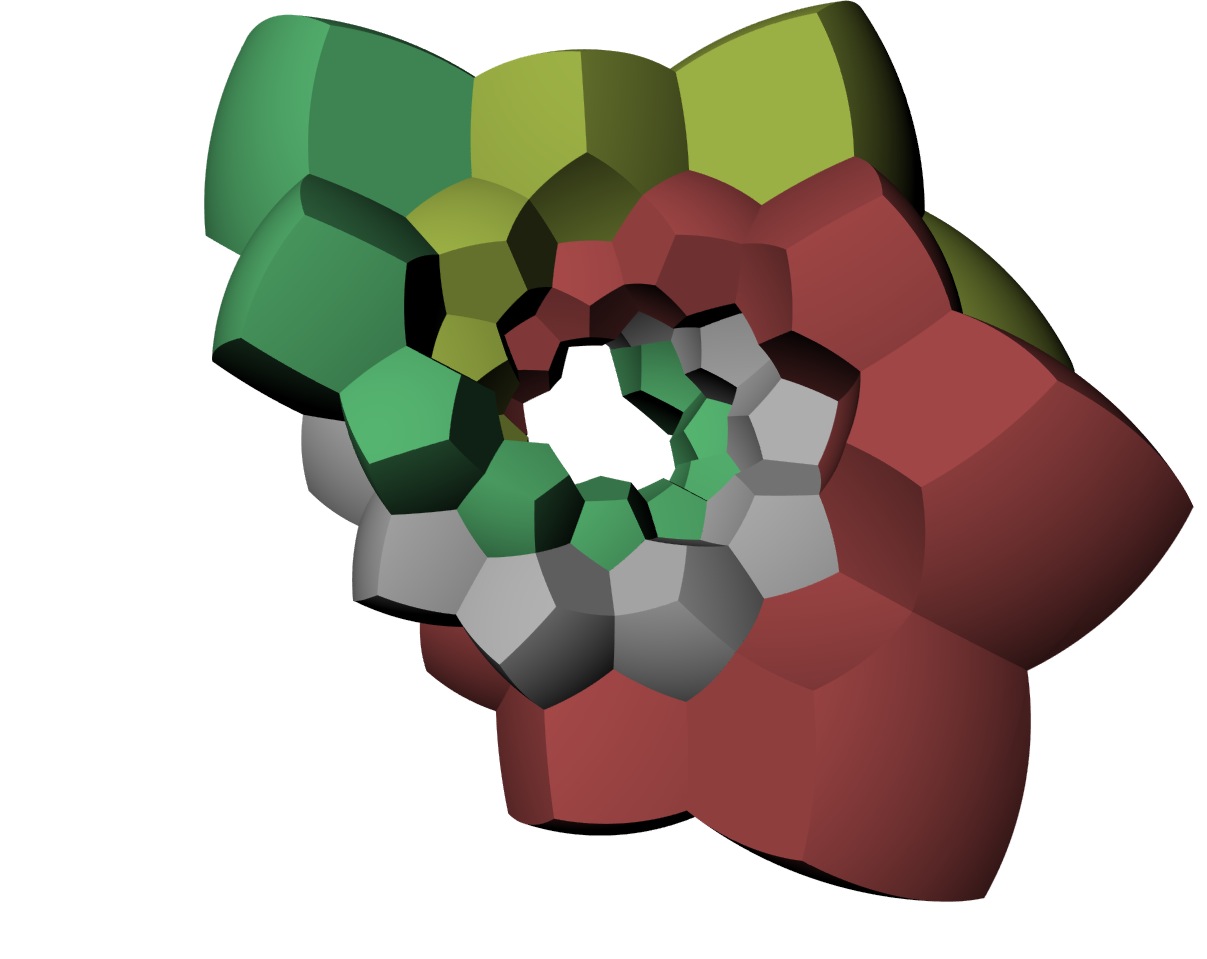}
\label{Fig:SolidDodecRings3}
}
\subfloat[]
{
\includegraphics[width=0.15\textwidth]{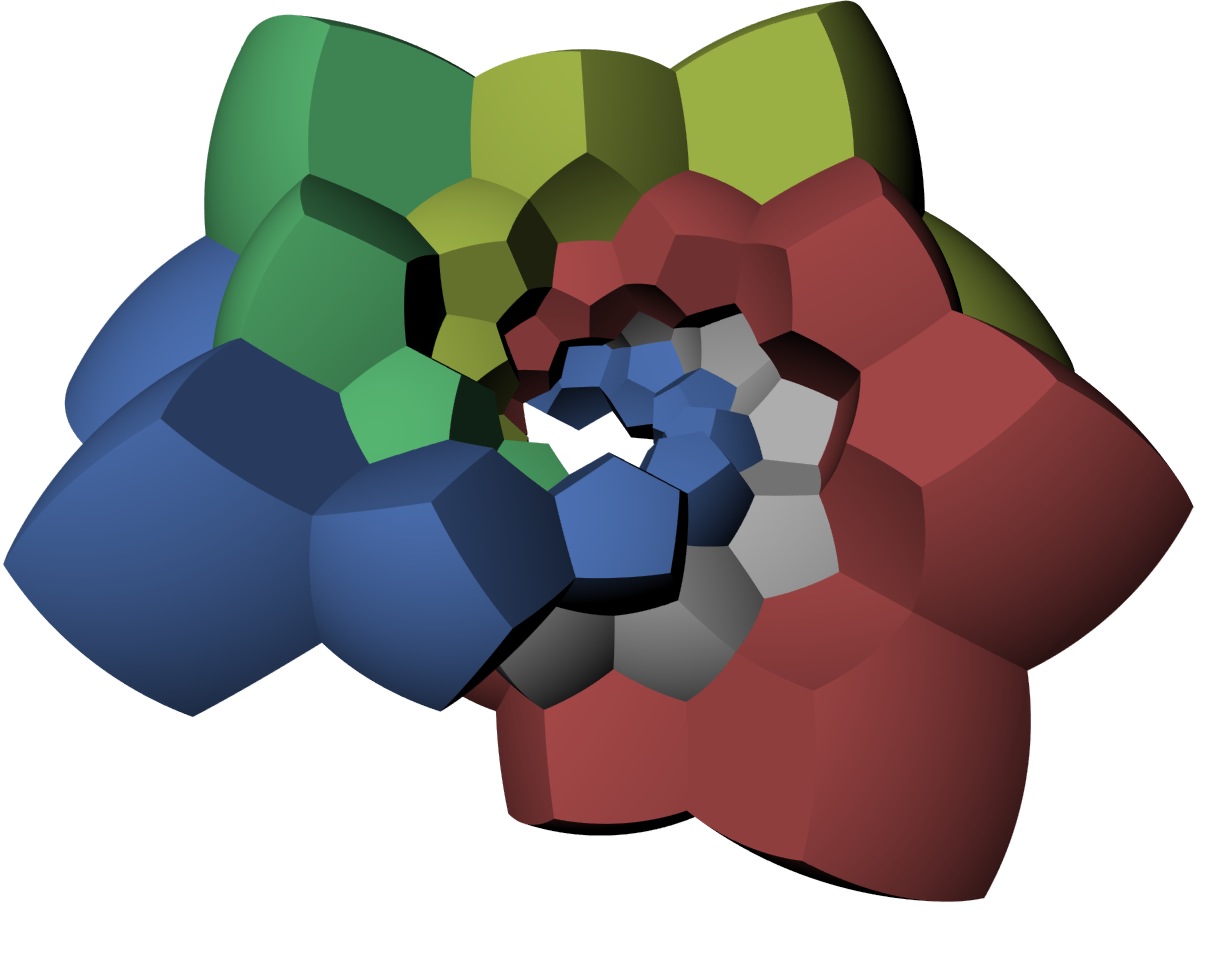}
\label{Fig:SolidDodecRings4}
}
\subfloat[]
{
\includegraphics[width=0.15\textwidth]{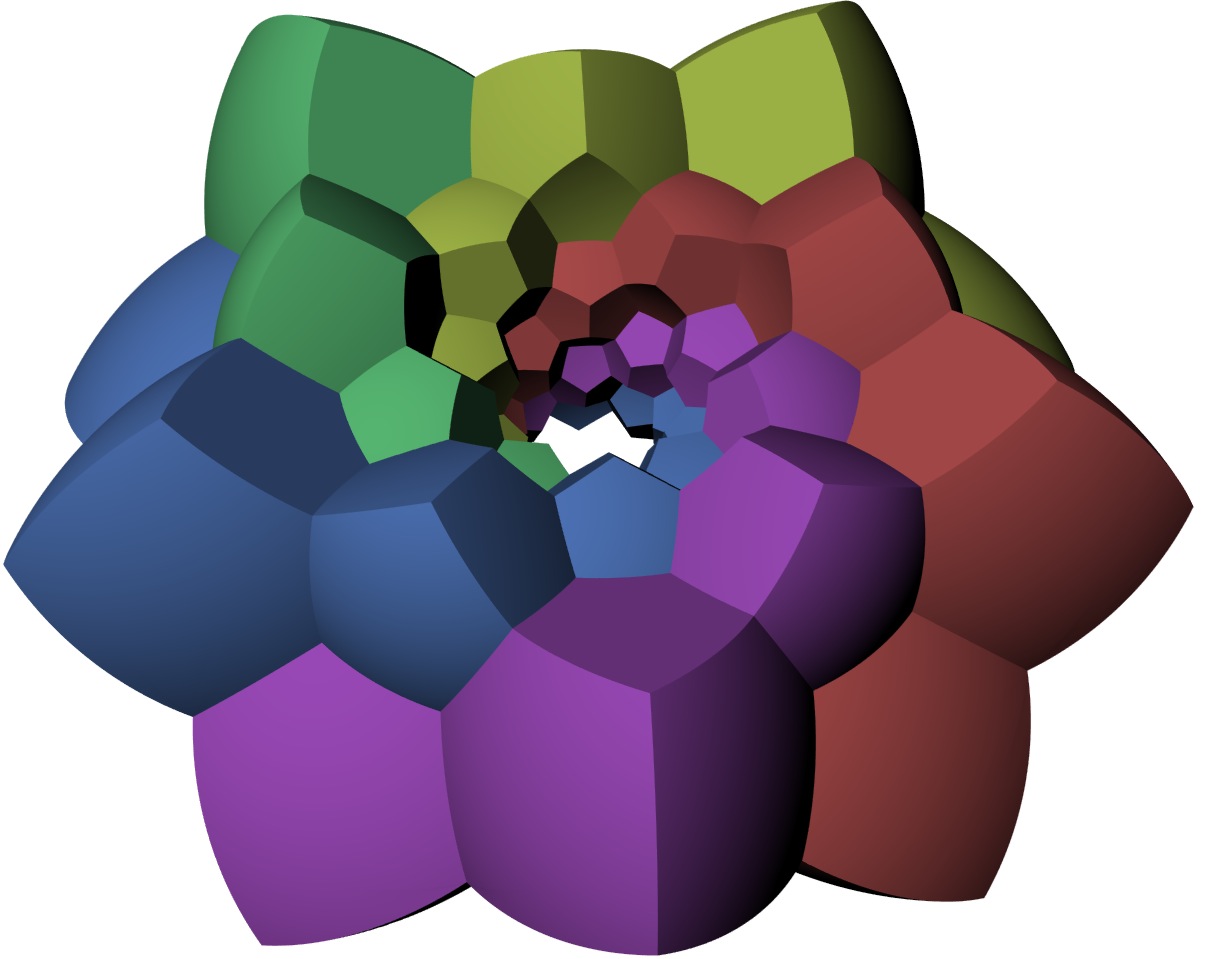}
\label{Fig:SolidDodecRings5}
}


\caption{Rings of dodecahedra. \reffig{SolidDodecRings0} shows the
  equatorial ring. Figures~\ref{Fig:SolidDodecRings1}
  through~\ref{Fig:SolidDodecRings5} show the outer rings wrapping
  around it.}
\label{Fig:SolidDodecRings}
\end{figure}

With notation as in \refsec{Pos}, suppose that $q \in \calD^*$ is the
lift of the face rotation $A \in \calD$ of angle $2\pi/5$ about the
vector $f$.  Let $R = \subgp{q} < \calD^*$ be the resulting cyclic
group of order ten.  Note that $R$ has twelve right cosets in
$\calD^*$.  We call the cosets \emph{rings} because each corresponding
union of spherical dodecahedra forms a solid torus in $S^3$.  We give
the rings the following names: $R$ is the \emph{spinal ring}, $R^\eq$
is the \emph{equatorial ring} (having all elements at distance $\pi/2$
from the south pole), $R^\inn_0$ to $R^\inn_4$ are the \emph{inner
  rings} (each incident to the spine), and $R^\out_0$ to $R^\out_4$
are the \emph{outer rings} (each incident to the equator).  The names
are justified by the following proposition.


\begin{proposition}
\label{Prop:Ring}
The rings meet the spherical layers of $\calT_{120}$ as follows.
\begin{center}
\begin{tabular}{clcccccc}
distance & rotation type & $\#$~cells & spinal & equatorial & remaining & inner & outer \\
\midrule
$0$      & identity & 1  & 1 & 0  & 0  & 0 & 0 \\
$\pi/5$  & face     & 12 & 2 & 0  & 10 & 2 & 0 \\
$\pi/3$  & vertex   & 20 & 0 & 0  & 20 & 2 & 2 \\
$2\pi/5$ & face     & 12 & 2 & 0  & 10 & 0 & 2 \\
$\pi/2$  & edge     & 30 & 0 & 10 & 20 & 2 & 2 \\
$3\pi/5$ & face     & 12 & 2 & 0  & 10 & 0 & 2 \\
$2\pi/3$ & vertex   & 20 & 0 & 0  & 20 & 2 & 2 \\
$4\pi/5$ & face     & 12 & 2 & 0  & 10 & 2 & 0 \\
$\pi$    & identity & 1  & 1 & 0  & 0  & 0 & 0
\end{tabular}
\end{center}
The column titled ``remaining'' counts the number of cells left in
each layer after the spinal and equatorial rings have been removed.
\end{proposition}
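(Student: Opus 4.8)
The plan is to recognise the twelve rings as the twelve right cosets of $R$, to match these cosets with the twelve faces of $D$, and then to read off each ring's layer--incidences from a single computation. Write $q = e^{f\pi/5}$ for the lift of the order-five face rotation $A \in \calD$ about $f$, so $R = \group{q}$ has order ten. Since $\psi$ maps $R$ two-to-one onto $\Stab_\calD(f) = \group{A}$ with kernel $\{\pm 1\} \subset R$, the right cosets $Rg$ correspond bijectively with the right cosets of $\group{A}$ in $\calD$, hence --- via the orbit map $B \mapsto B^{-1}(f)$, the group $\calD$ being transitive on the twelve faces --- with the twelve faces of $D$; concretely $Rg$ corresponds to $\psi_{g^{-1}}(f)$, so the spine $R$ corresponds to $f$. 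I would then \emph{define} the equator $R^\eq$, the inner rings $R^\inn_0, \dots, R^\inn_4$, and the outer rings $R^\out_0, \dots, R^\out_4$ to be the cosets of the antipodal face $-f$, of the five faces meeting $f$ in an edge, and of the five faces meeting $-f$ in an edge; the proposition then says these names are apt. Note finally that the layer containing a cell $V_g$ is determined by $\operatorname{Re}(g) = \cos d_S(1,g)$, the nine layers being distinguished by the nine distinct values $\pm 1$, $\pm\cos\pi/5$, $\pm\tfrac12$, $\pm\cos 2\pi/5$, $0$.

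The workhorse is the formula
\[
\operatorname{Re}(q^n g) \, = \, \cos(n\pi/5)\,\operatorname{Re}(g) + \sin(n\pi/5)\,\operatorname{Re}(fg),
\]
valid since $f$ commutes with $q$. As $g$ ranges over a coset the multiset $\{\operatorname{Re}(q^n g) : n \in \ZZ/10\}$ is unchanged (passing to $q^m g$ merely reindexes), and this multiset is exactly what records how the ring $Rg$ meets the nine layers; indeed it is controlled by the single coset-invariant $r(g) = |\proj_\Pi(g)| = \sqrt{\operatorname{Re}(g)^2 + \langle f, g\rangle^2}$, where $\Pi = \basis{1,f}$ and left multiplication by $q$ --- a rotation of $\Pi$ (and of $\Pi^\perp$) through $\pi/5$ --- commutes with $\proj_\Pi$. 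So it suffices to evaluate the formula on one representative of each of the four ring-types, using $f = xi + yj$ and the neighbouring face $f' = xj + yk$ from \refsec{Pos}, the fact noted there that $i, j, k \in \calD^*$, and the elementary trigonometry of that section: from $x + y = \cot\pi/5$ and $\{x,y\} = \{\tfrac12(\cot\pi/5 \pm \cot 2\pi/5)\}$ together with \refeqn{Trig1} one gets $x^2 = \tfrac{5+\sqrt5}{10}$, $y^2 = \tfrac{5-\sqrt5}{10}$, whence $x\sin\pi/5 = \tfrac12$, $x\sin 2\pi/5 = \cos\pi/5$, $y\sin\pi/5 = \cos 2\pi/5$, $y\sin 2\pi/5 = \tfrac12$ directly from the table in \refsec{DodecaGeom}. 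The four cases are then: for $R$, $\operatorname{Re}(q^n) = \cos(n\pi/5)$, giving a single cell at each pole and two on each of the antarctic, Capricorn, Cancer and arctic spheres; for $R^\eq = Rk$, $fk = yi - xj$ is purely imaginary, so all ten cells lie on the equatorial sphere, and $\psi_k(f) = -f$ (\reflem{Cover}) confirms $Rk$ is the coset of $-f$; for the inner ring $Ri$, $fi = -x - yk$, so $\operatorname{Re}(q^n i) = -x\sin(n\pi/5)$ runs through $0$ (twice), $\pm\tfrac12$ (twice each) and $\pm\cos\pi/5$ (twice each) --- the ``inner'' column --- while $\psi_i(f) = xi - yj$ has $\langle f, \psi_i(f)\rangle = x^2 - y^2 = 1/\sqrt5 = \langle f, f'\rangle$, so it is a face meeting $f$ in an edge; and for the outer ring $Rj$, $fj = -y + xk$, so $\operatorname{Re}(q^n j) = -y\sin(n\pi/5)$ runs through $0$ (twice), $\pm\cos 2\pi/5$ (twice each) and $\pm\tfrac12$ (twice each) --- the ``outer'' column --- and $\psi_j(f) = -xi + yj$ meets $-f$ in an edge.

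To conclude, conjugation by $q$ is a symmetry of $\calT_{120}$ (the twisted action of \reflem{Sym120}) fixing $1$, hence preserving every layer setwise; since $\psi_q = A$ cyclically permutes the five faces meeting $f$ in an edge, and likewise those meeting $-f$, all five inner rings inherit the tally found for $Ri$ and all five outer rings that for $Rj$. Adding the four tallies over the $1 + 1 + 5 + 5$ rings reproduces the ``number of cells'' column, and subtracting the spine and equator columns reproduces the ``remaining'' column, which completes the proof. The step I expect to demand the most care is the bookkeeping: checking precisely that the ring $\leftrightarrow$ face dictionary is as claimed --- so that the twelve cosets are indeed the spine, the equator, and the two conjugation orbits of $Ri$ and $Rj$, and that $Ri$ and $Rj$ land against faces adjacent to $f$ and to $-f$ respectively. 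Once that is secure, the trigonometric identities invoked above are routine consequences of the table in \refsec{DodecaGeom}.
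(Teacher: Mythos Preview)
Your proof is correct and takes a genuinely different route from the paper's. The paper argues geometrically: after handling the spine and equator via the stabiliser $\calD_P$, it observes that each coset lies on a great circle, hence meets each non-great spherical layer in at most two points; a count then forces each inner and outer ring to meet the two temperate spheres and the equatorial sphere in exactly two cells each. The inner rings are identified as $R\cdot q' q^{-i}$ and shown to meet the antarctic sphere twice (using the computation of $\operatorname{Re}(q^{-1}q')$ from \reflem{Voronoi}), invariance under $-1$ gives the arctic count, and the outer column is then obtained purely by elimination --- the twenty unaccounted-for cells must sit in the tropics.

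Your approach replaces this geometry-plus-counting by a single explicit formula, $\operatorname{Re}(q^n g) = \cos(n\pi/5)\,\operatorname{Re}(g) + \sin(n\pi/5)\,\operatorname{Re}(fg)$, together with the clean coset-to-face dictionary $Rg \mapsto \psi_{g^{-1}}(f)$. This lets you read off every column directly from the concrete representatives $1, k, i, j$, with no elimination step and no appeal to great-circle intersections. The trade-off is that you need more trigonometric identities (the values of $x\sin\pi/5$, $x\sin 2\pi/5$, $y\sin\pi/5$, $y\sin 2\pi/5$), whereas the paper gets by with the single equality $\operatorname{Re}(q^{-1}q') = \cos\pi/5$ already established. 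Your dictionary also makes the \emph{definition} of inner versus outer rings more transparent than the paper's $R\cdot q' q^{-i}$; one can check the two agree since $r(q') = x = r(i)$, so $Rq'$ has the inner profile. One small point: the paper's Remark in \refsec{Pos} records only that $i,j,k$ are edge centers of $\calT_D$; you are silently using that the lift of the edge rotation about $i$ is $e^{i\pi/2} = i$ itself, which follows from \reflem{Cover} but is worth saying.
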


\begin{proof}
Let $P$ be the pentagon of $\calT_D$ with centre $f$ and let $-P$ be
the antipodal pentagon to $P$, which exists by \reflem{Tiling}.  Let
$\calD_P < \calD$ be the stabiliser of $\pm P = P \cup -P$.  

\begin{claim*}
The stabiliser $\calD_P$ is a dihedral group of order ten: it contains
the rotations of $P$, contains five edge rotations perpendicular to
$f$, and acts dihedrally on the plane $f^\perp$.
\end{claim*}

\begin{proof}
The pentagons $\pm P$ contain ten right-handed spherical flag
triangles.  Thus $\calD_P$ contains at most ten elements.  Five of
these are the face rotations about $f$, the centre of $P$.  Note, as
shown in \reffig{DodecaPosition}, the face centre $f$ is perpendicular
to the edge centre $k$.  Thus the edge rotation about $k$ swaps $P$
and $-P$.  The four images of $k$, under the face rotation about $f$,
provide the remaining edge rotations in $\calD_P$.
\end{proof}

Let $\calD_P^*$ be the lift of $\calD_P$ to $\calD^*$.  So $\calD_P^*
\subset S^3$ is a binary dihedral group.  The spinal ring $R =
\subgp{q}$ is an index two subgroup of $\calD_P^*$.  
The equatorial ring $R^\eq$ is the unique coset of $R$ inside of
$\calD_P^*$.  By the claim immediately above, every element of $R$ is
a lift of a face rotation and every element of $R^\eq$ is a lift of an
edge rotation.  This verifies the spine and equatorial columns in the
table.

We now consider how the remaining $100$ cells are distributed among
the five outer and five inner rings.  For any great circle $C$ and any
great sphere $S \subset S^3$ the intersection $C \cap S$ is either two
antipodal points, or all of $C$.  When $S$ is round, but not great, $C
\cap S$ is zero, one, or two points.  As noted immediately after
\refeqn{Param} the elements of $R = \subgp{q}$ lie on a great circle
through the identity.  Since the right action of $S^3$ on itself is
via isometry, the right cosets $R \cdot p$ also lie on great circles.
Since these great circles are disjoint, deduce $R^\eq$ is the only one
of them contained in the equatorial sphere.  The remaining cosets meet
the equatorial sphere in either zero elements or two, antipodal,
elements.

Since there are twenty cells left in the equatorial sphere, deduce
that each inner ring, and each outer ring, contains exactly two
equatorial cells.  This finishes the row labelled $\pi/2$ in the
table.  The same counting argument applies to the rows labelled
$\pi/3$ and $2\pi/3$.  This accounts for six elements of each ring; we
must pin down the remaining four.

Recall the definition of $q'$ from the proof of \reflem{Voronoi}: the
quaternion $q'$ is the lift of a face rotation about $f'$, where $f'$
is the centre of a face $P'$ of the tiling $\calT_D$, and where $P'$
is adjacent to the face $P$.  The inner rings are the cosets $R^\inn_i
= R \cdot q' q^{-i}$, for $i = 0, 1, 2, 3, 4$.  As shown in the proof
of \reflem{Voronoi}, the real part of $(q^{-1}) \cdot q'$ is
$\cos(\pi/5)$.  Thus $R^\inn_0 = R \cdot q'$ meets the layer at
distance $\pi/5$ in exactly two elements, namely $q'$ and $(q^{-1})
\cdot q'$.

Note also that $R^\inn_i = R \cdot q' q^{-i} = q^i (R \cdot q') q^{-i}
= \phi_q^i(R^\inn_0)$.  That is, the $i^\thsup$ coset is obtained from
$R^\inn_0$ via the twisted action.  It is now an exercise to show that
all of the $R^\inn_i$ are distinct.


Note that all cosets are invariant under the antipodal map, because
$-1 \in R$.  This implies $R^\inn_0$ also meets the layer at distance
$4\pi/5$ in two points.  This accounts for all ten elements of
$R^\inn_0$.  Since $\phi_q$ fixes each spherical layer setwise, and
since $R^\inn_i = \phi_q^i(R^\inn_0)$, the inner column of the table
is verified.

There are only twenty elements of $D^*$ left to be accounted for; all
of these make angle $2\pi/5$ or angle $3\pi/5$ with the south pole.  It
follows that each outer ring (the cosets $R^\out_i$) contains two
elements from each of those layers.  This verifies the outer column of
the table.
\end{proof}

\begin{remark}
\label{Rem:Hopf}
The \emph{Hopf fibration} is the partition of $S^3$ into cosets of the
one-parameter subgroup $\{\exp(i\alpha)\}$.  After a rotation, we see
that the cosets of $R$ give a combinatorial Hopf fibration: they
divide the $120$--cell into 12 rings of ten dodecahedra each.  The
centres of the rings lie on 12 great circles of the Hopf fibration.
Note also that the quotient space of the Hopf fibration is
homeomorphic to $S^2$.  In similar fashion there is a kind of
combinatorial map from the $120$--cell to the dodecahedron. 
\end{remark}

\section{Rings to ribs}
\label{Sec:Ribs}

We describe the ribs of Quintessence: a collection of puzzle pieces,
in $\mathbb{R}^3$, that combined in various ways to produce burr
puzzles.  The puzzle pieces are based on the rings of spherical
dodecahedra described in \refsec{Rings}.  We use stereographic
projection, $\rho$, defined in \refsec{StereoProj}, to move the pieces
into $\RR^3$ where we can 3D print the resulting ribs.

Following the notation of \refsec{StereoProj} we have
\[
\frac{d\rho}{d\alpha} = \frac{1}{1 + \cos(\alpha)}\cdot u.
\]
In particular, if $e^{u\alpha}$ is near the south pole then $\alpha$
is close to zero and stereographic projection shrinks objects by a
factor of approximately two.  If $e^{u\alpha}$ is near the equatorial
sphere then $\alpha$ is close to $\pi/2$.  In this case stereographic
projection leaves sizes essentially unchanged.  However, if
$e^{u\alpha}$ approaches the north pole then $\alpha$ approaches $\pi$
and sizes blow up.  Thus a dodecahedron of the $120$--cell close to
the south pole shrinks slightly, and a dodecahedron close to the north
pole becomes much larger.

All of the calculations so far have been dimensionless.  When we wish
to 3D print a rib, we have to choose a scale $\lambda$, say in
millimetres or inches, corresponding to a unit distance in the image
of $\rho$.  Many considerations need to be taken into account in
choosing $\lambda$; the scale is sensitive to the design of the ribs.
However, two issues are clear: a large feature on a rib causes the
cost to grow with the cube of $\lambda$ while a very small feature may
be too fragile or may fall below the resolution of the printer.

These two issues are in tension, and lead to the general principle
that features that are identical in $S^3$ should have sizes in bounded
ratio in $\RR^3$, after projection.  In this particular case, the
features of the ribs are the dodecahedra.  The principle tells us that
we should not be using dodecahedra that are too close to the north
pole.  However, the ratio of two between sizes near the equator and
near the south pole is acceptable.

\begin{figure}[htbp]
\centering 
\hspace{-1cm}
\subfloat[Spine.]
{
\includegraphics[width=0.2\textwidth]{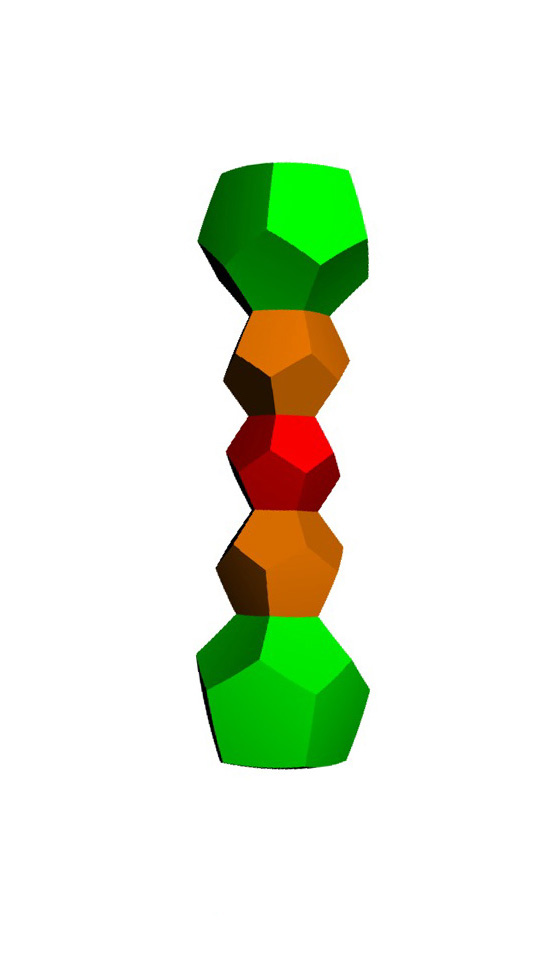}
\label{Fig:Spine}
} 
\hspace{-0.5cm}
\subfloat[Inner six.]
{
\includegraphics[width=0.2\textwidth]{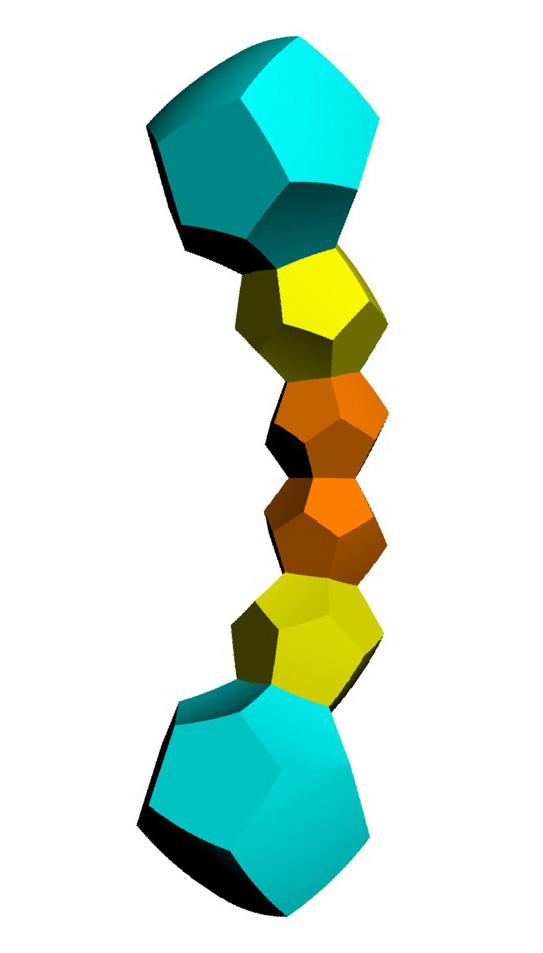}
\label{Fig:Inner}
}
\hspace{-0.2cm}
\subfloat[outer six.]
{
\includegraphics[width=0.2\textwidth]{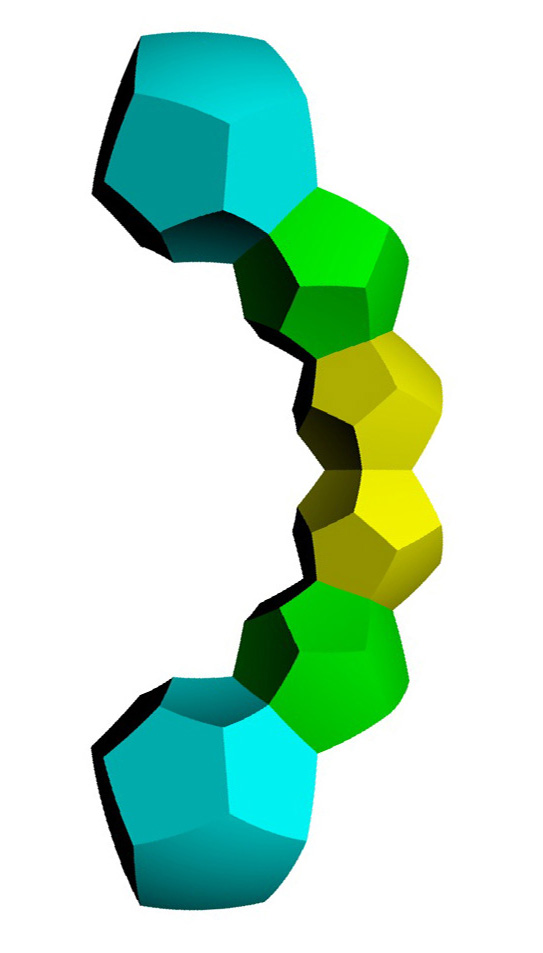}
\label{Fig:Outer}
}
\subfloat[Equator.]
{
\includegraphics[width=0.2\textwidth]{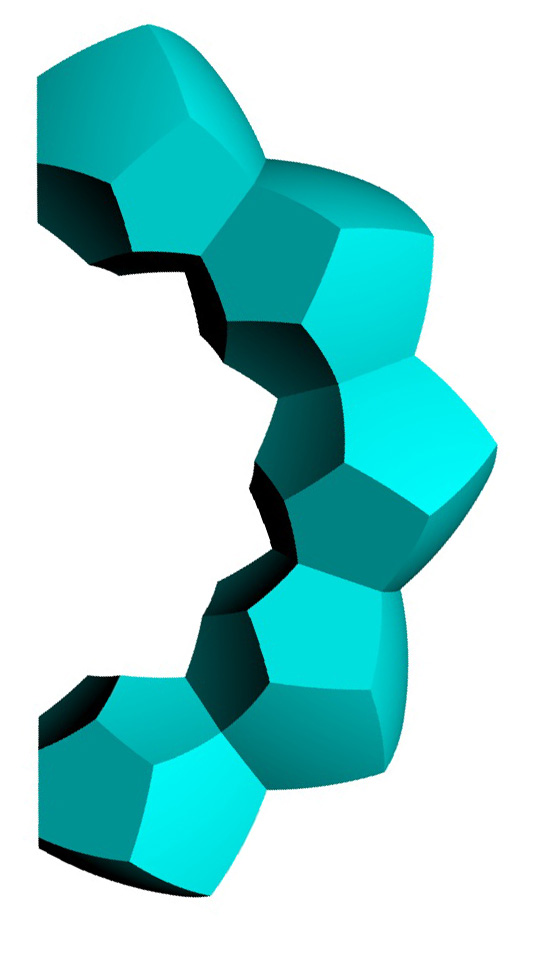}
\label{Fig:Equator}
}
\caption{The colouring of the cells is by layer and is consistent
  with \reffig{120_layers}.  We obtain the inner four and outer four
  ribs by deleting the equatorial cells.}
\label{Fig:CellsInRibs}
\end{figure}

So, we remove from our rings any dodecahedra that lie strictly in the
northern hemisphere, giving us the \emph{spine}, the \emph{inner six}
ribs and the \emph{outer six} ribs.  Experimentation shows that many
interesting constructions require even shorter ribs; hence we also
make the \emph{inner four} ribs and the \emph{outer four} ribs.  These
are the result of removing the two equatorial dodecahedra from the
inner six and outer six.  The equatorial ring can be printed as is,
but again experimentation shows that more puzzles are possible if we
break the equatorial ring into two ribs of five dodecahedra each.  See
\reffig{Ribs} as well as \reffig{CellsInRibs}.

\begin{figure}[htbp]
\centering 
\hspace{-1cm}
\subfloat[]
{
\includegraphics[width=0.15\textwidth]{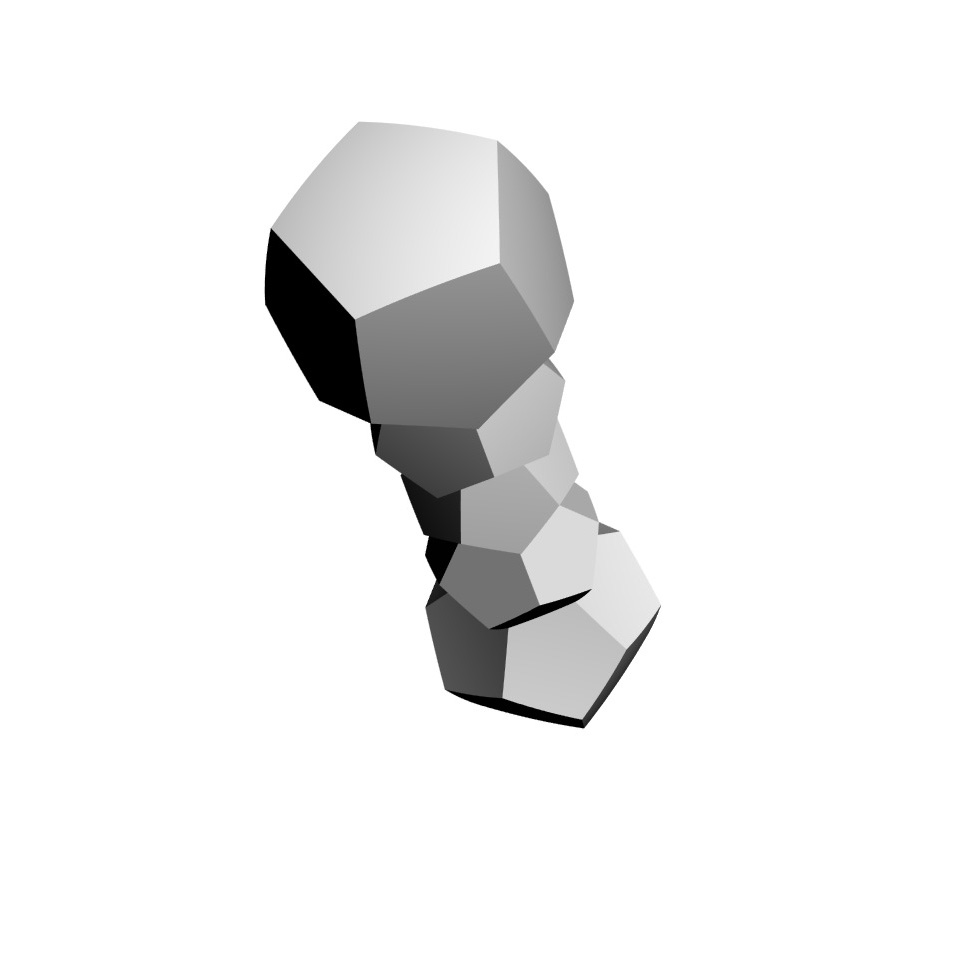}
\label{Fig:Dc45Meteor0}
} 
\hspace{-0.5cm}
\subfloat[]
{
\includegraphics[width=0.15\textwidth]{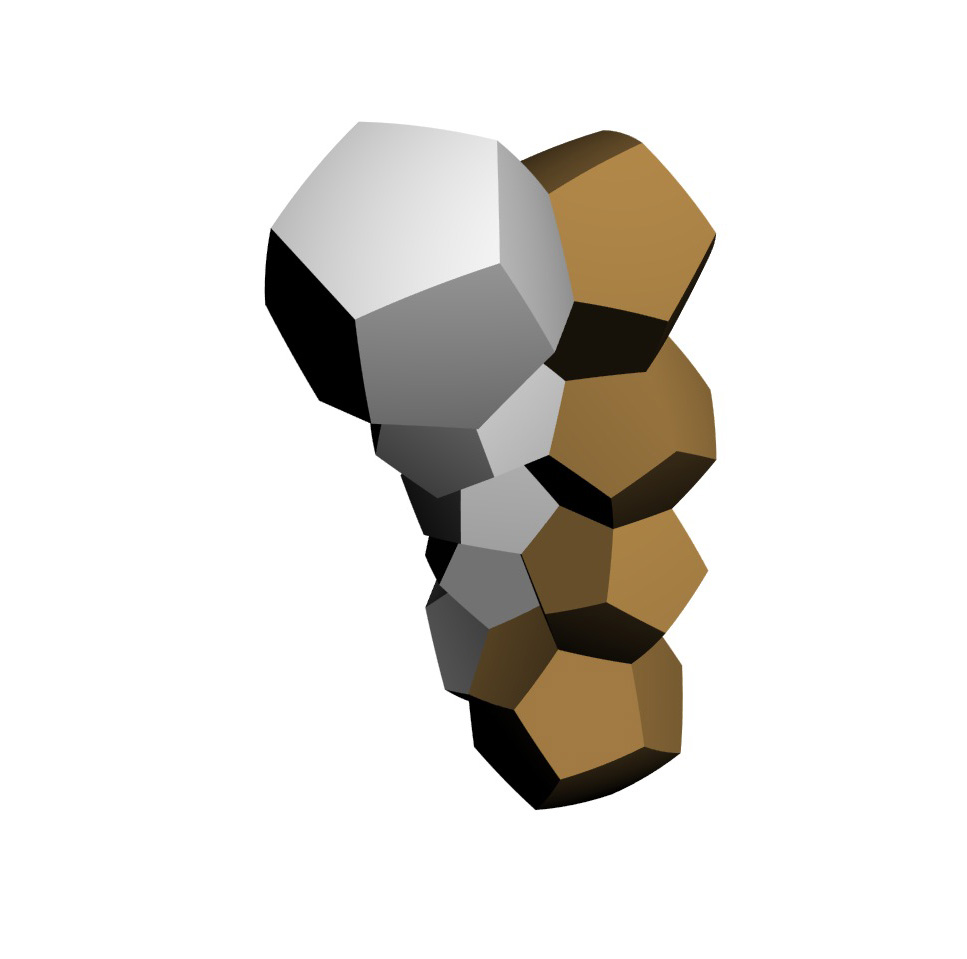}
\label{Fig:Dc45Meteor1}
} 
\hspace{-0.5cm}
\subfloat[]
{
\includegraphics[width=0.15\textwidth]{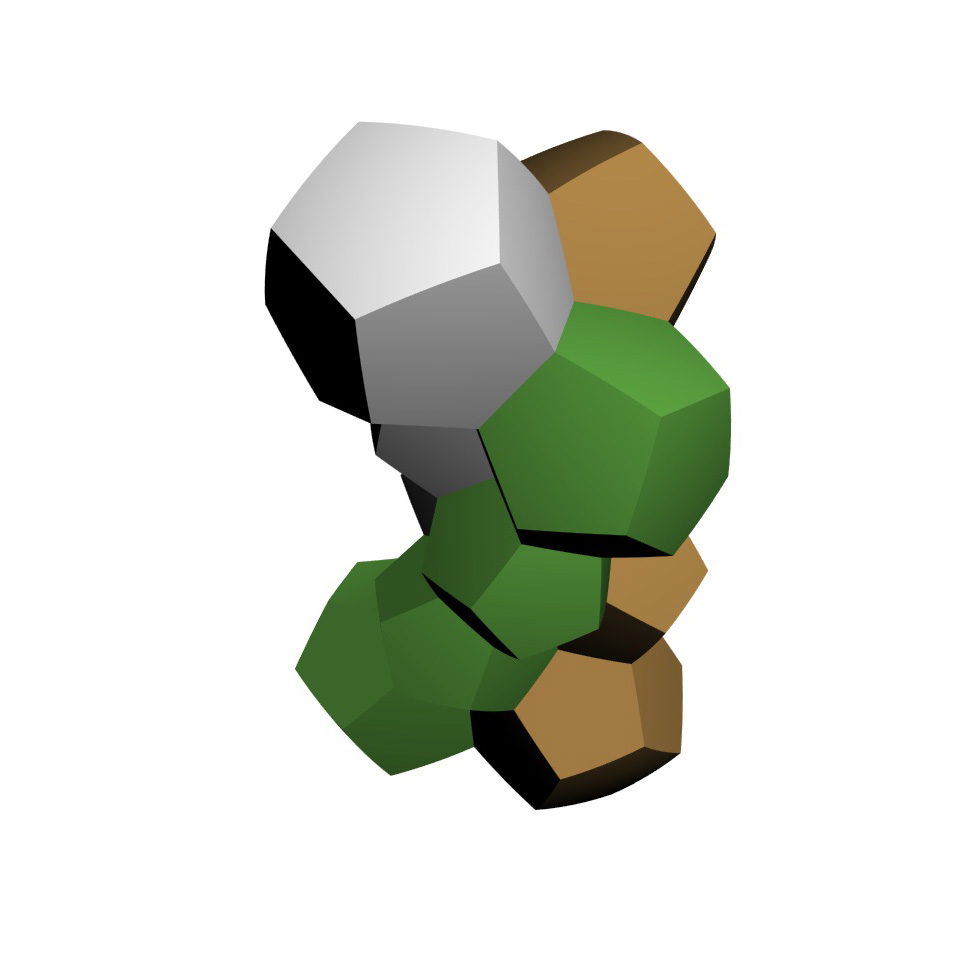}
\label{Fig:Dc45Meteor2}
}
\hspace{-0.2cm}
\subfloat[]
{
\includegraphics[width=0.15\textwidth]{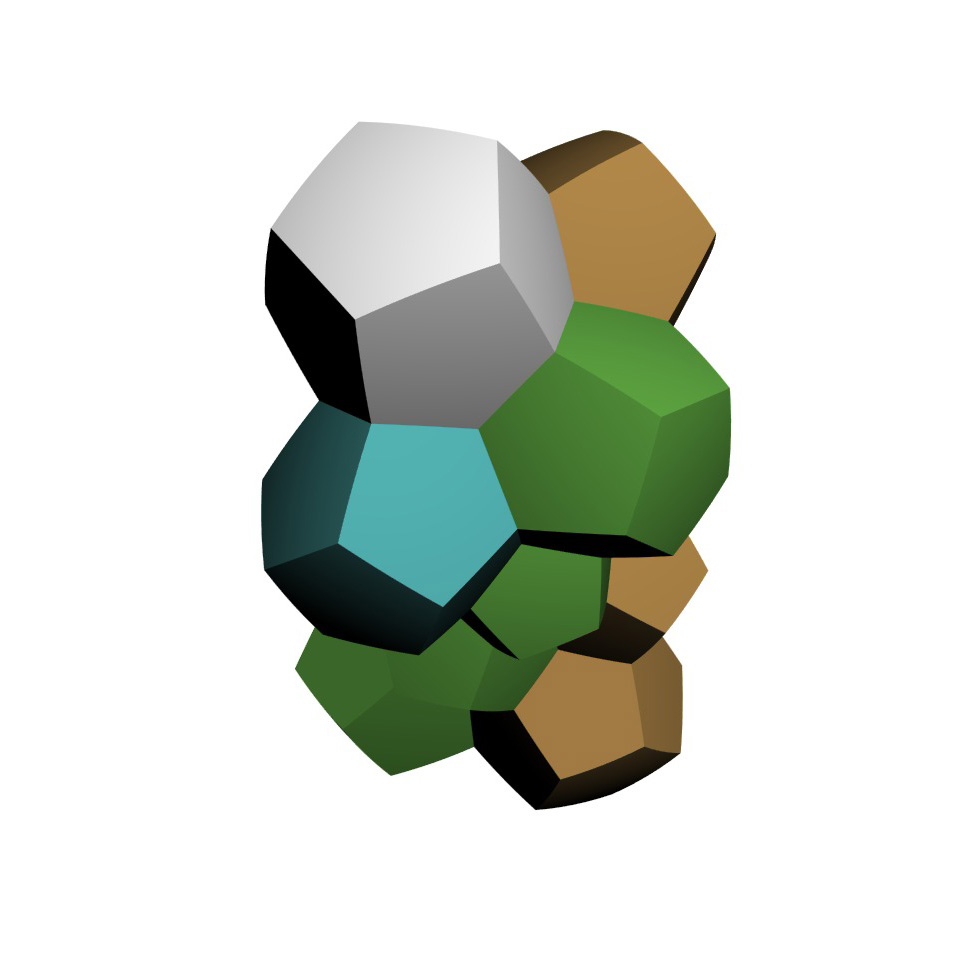}
\label{Fig:Dc45Meteor3}
}
\subfloat[]
{
\includegraphics[width=0.15\textwidth]{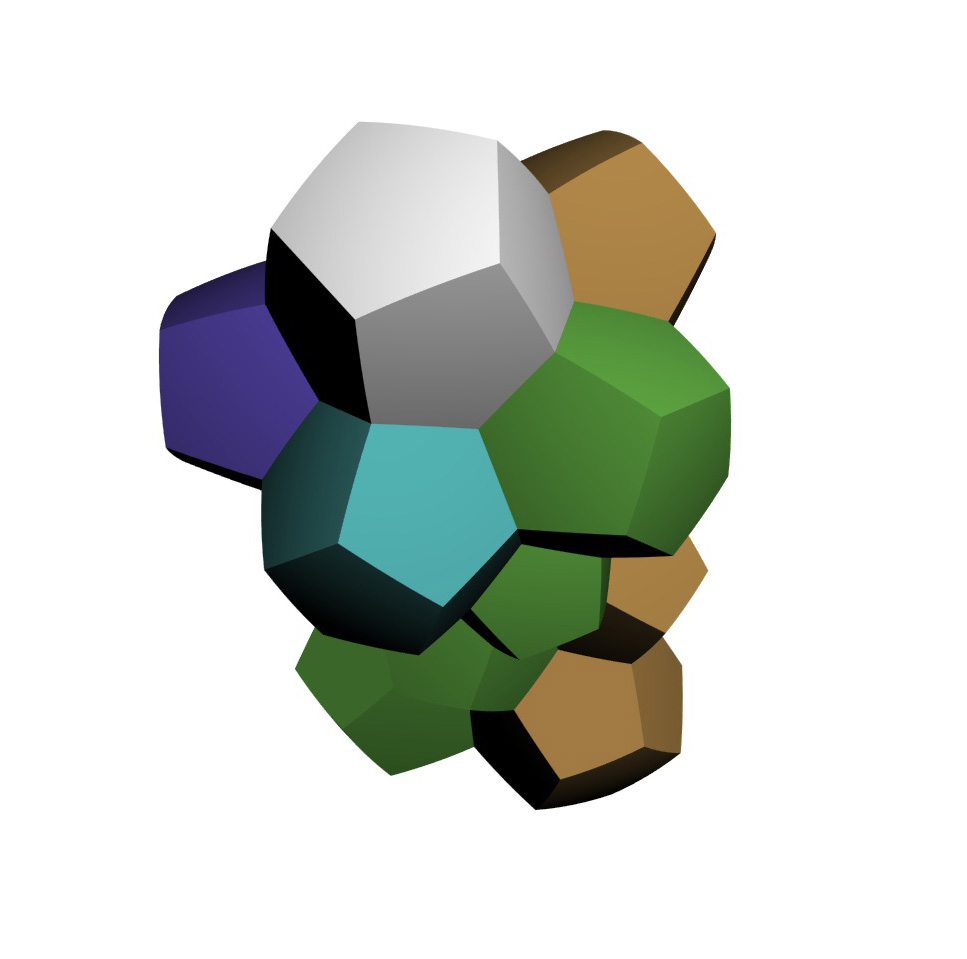}
\label{Fig:Dc45Meteor4}
}
\hspace{0.2cm}
\subfloat[]
{
\includegraphics[width=0.15\textwidth]{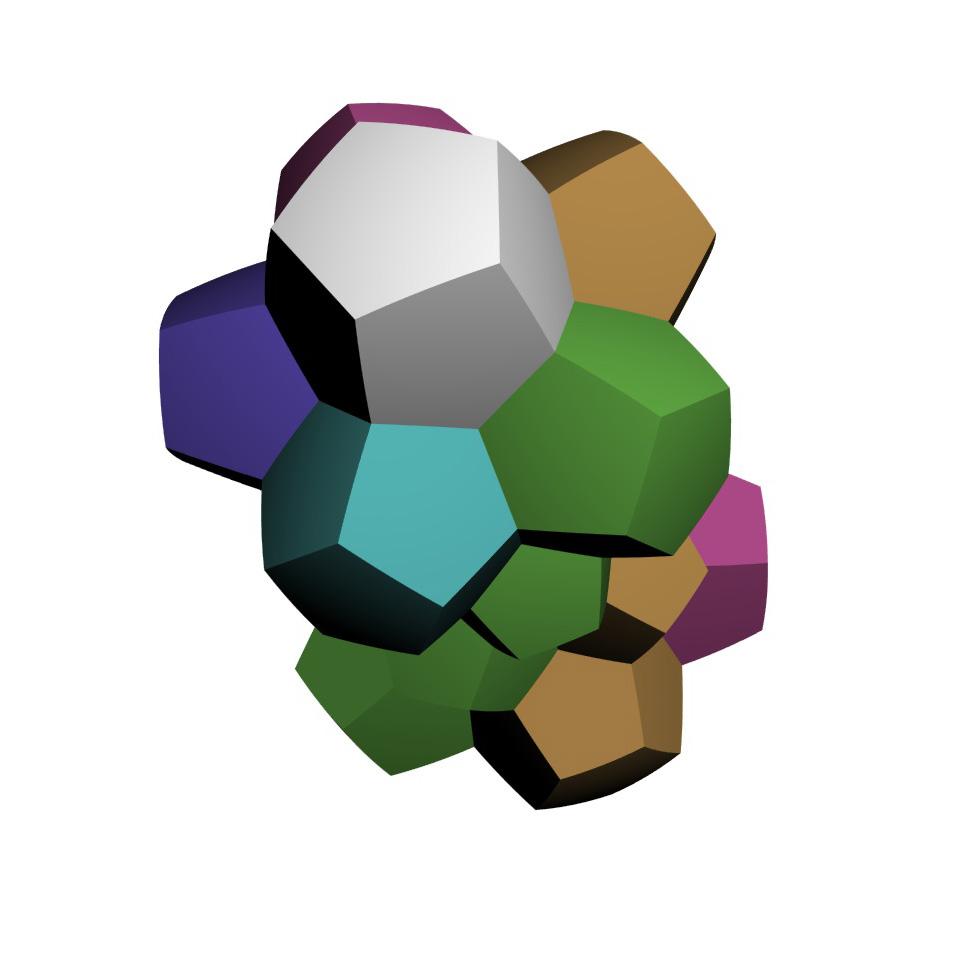}
\label{Fig:Dc45Meteor5}
}
\hspace{-0.5cm}
\subfloat[]
{
\includegraphics[width=0.15\textwidth]{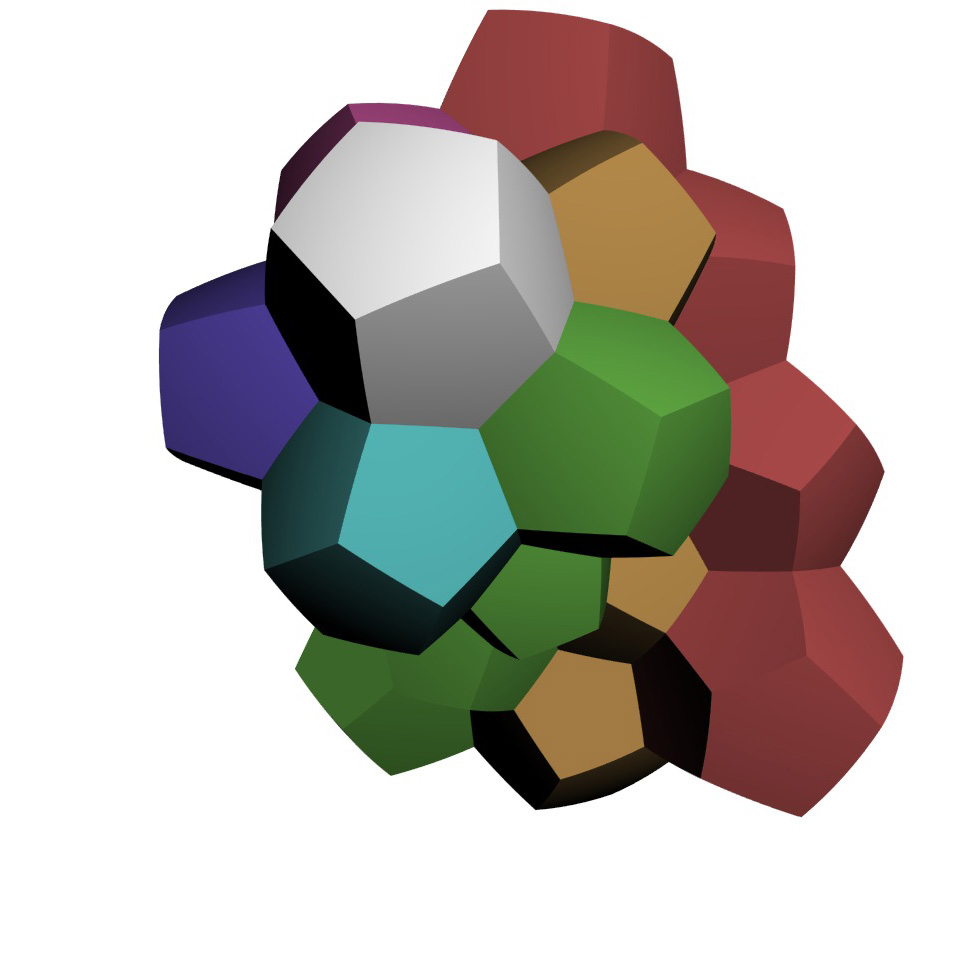}
\label{Fig:Dc45Meteor6}
} 
\hspace{-0.5cm}
\subfloat[]
{
\includegraphics[width=0.15\textwidth]{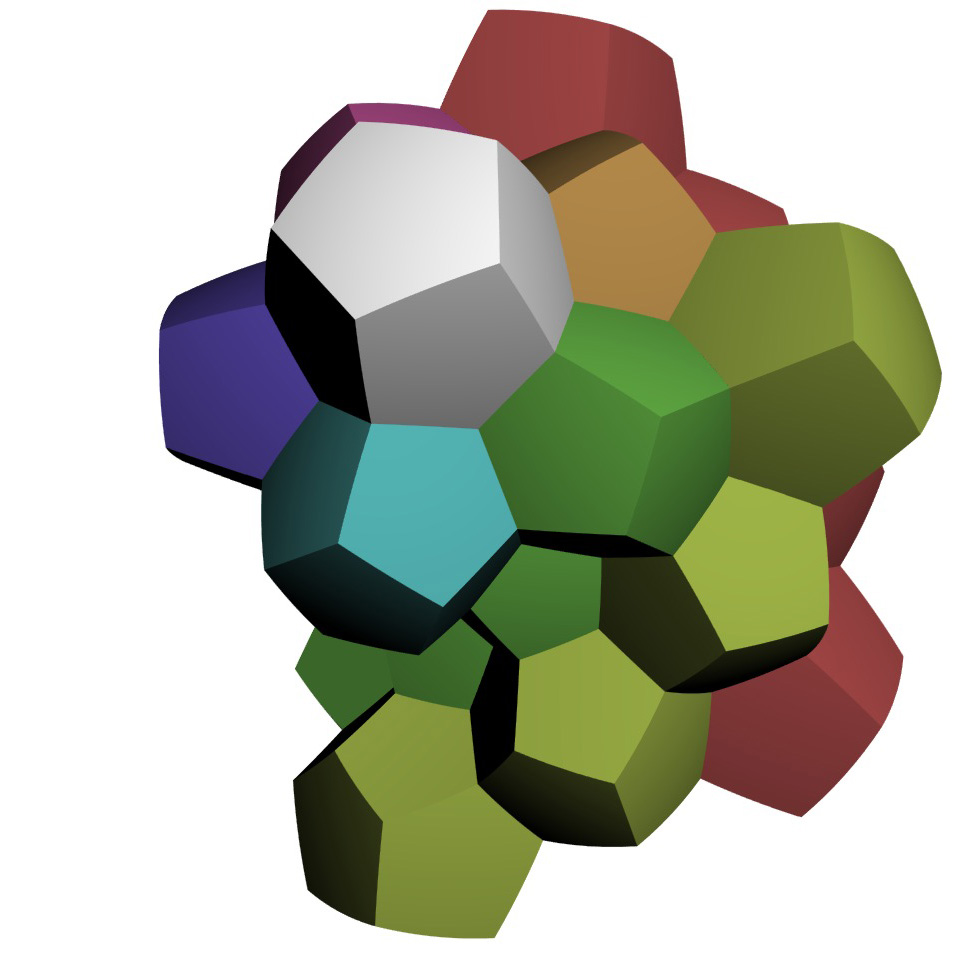}
\label{Fig:Dc45Meteor7}
}
\hspace{-0.2cm}
\subfloat[]
{
\includegraphics[width=0.15\textwidth]{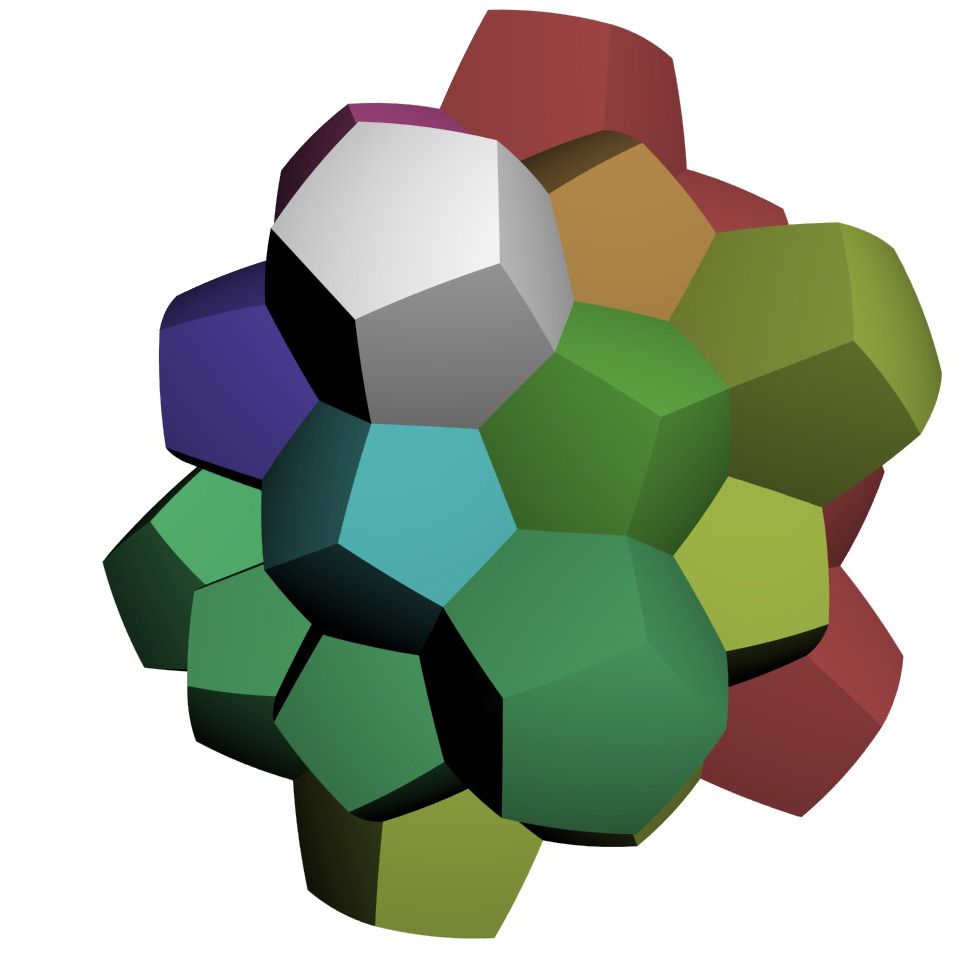}
\label{Fig:Dc45Meteor8}
}
\subfloat[]
{
\includegraphics[width=0.15\textwidth]{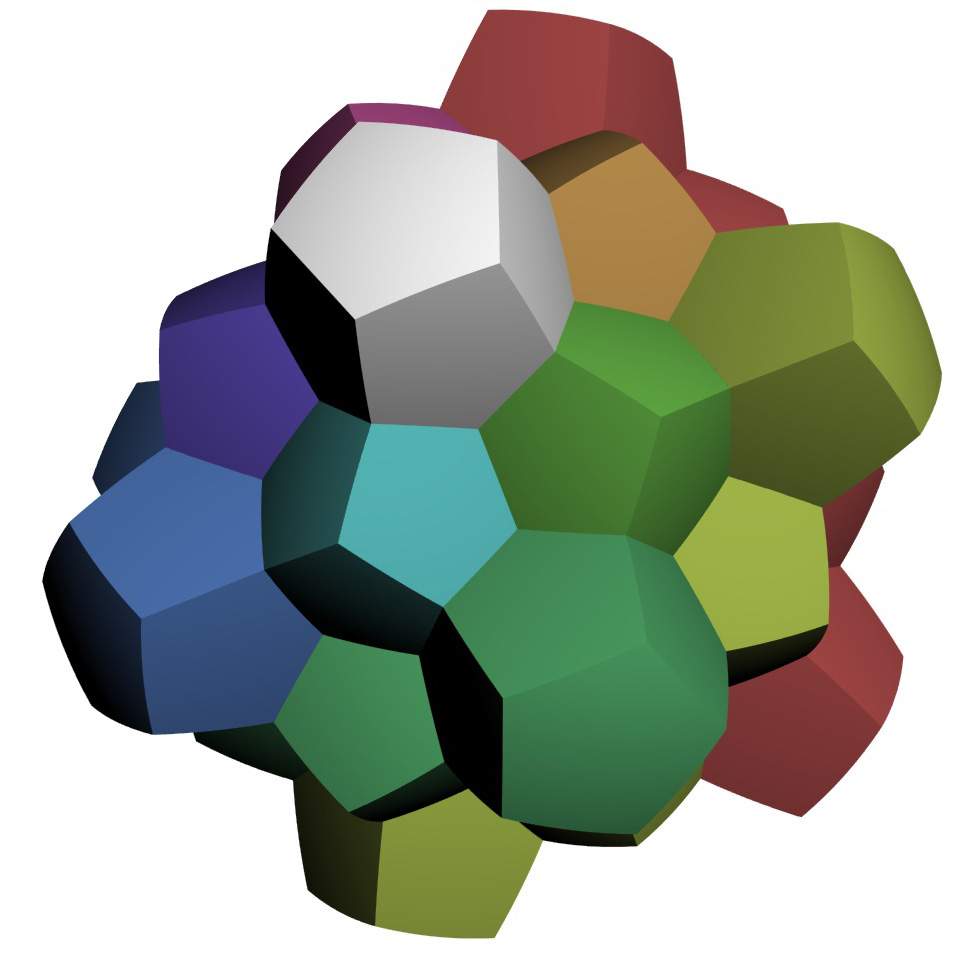}
\label{Fig:Dc45Meteor9}
}
\hspace{0.2cm}
\subfloat[]
{
\includegraphics[width=0.15\textwidth]{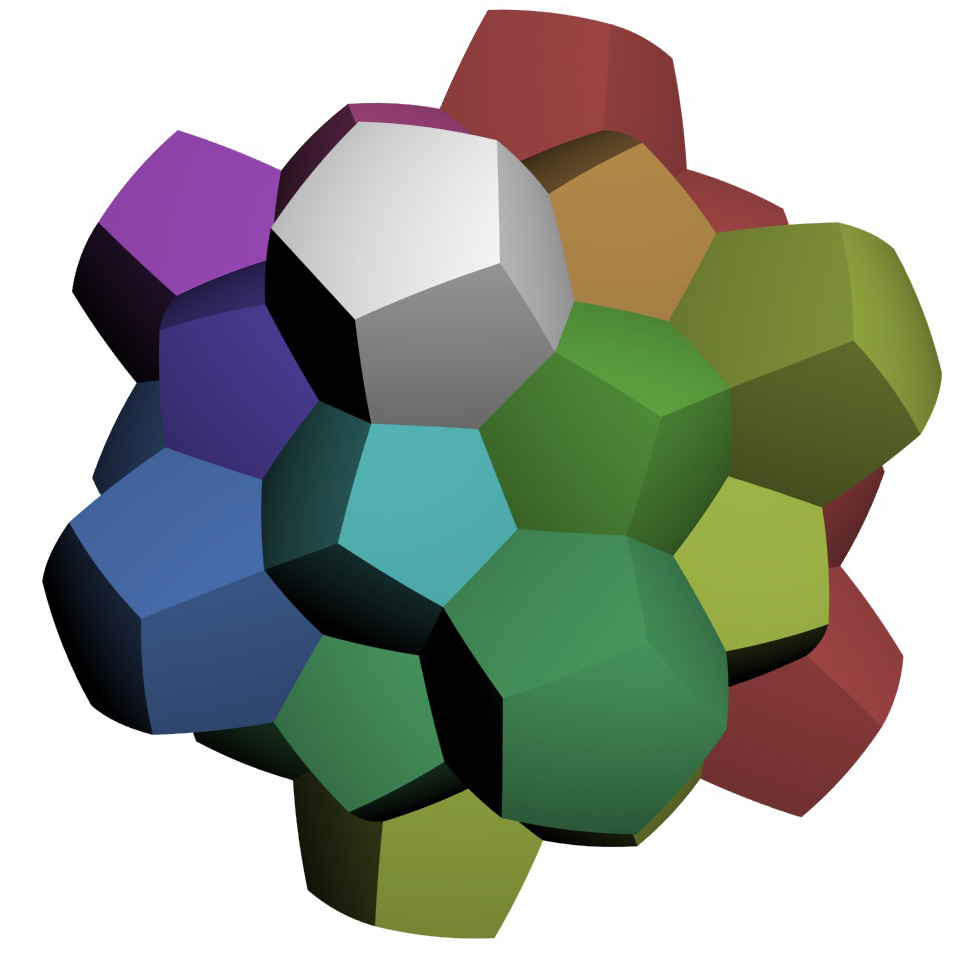}
\label{Fig:Dc45Meteor10}
}
\caption{Building the Dc45 Meteor: start with just the spine, in
  \reffig{Dc45Meteor0}.  One at a time add five copies of the inner four
  rib in Figures~\ref{Fig:Dc45Meteor1} through~\ref{Fig:Dc45Meteor5}.
  Then add five copies of the outer four rib, as in
  Figures~\ref{Fig:Dc45Meteor6} through~\ref{Fig:Dc45Meteor10}.}
\label{Fig:Dc45Meteor}
\end{figure}

With the spine and short ribs in hand, we can build, in $\RR^3$, the
stereographic projection of (almost) one-half of the $120$--cell.  We
call the resulting puzzle the \emph{Dc45 Meteor}; its construction is
shown in \reffig{Dc45Meteor}.  The spine and ribs are arranged
according to the combinatorial Hopf fibration (\refrem{Hopf}).  Since
all dodecahedra near the south pole are retained, and all dodecahedra
near the north pole are discarded, the result looks very much like
\reffig{120}: one-half of the $120$-cell.

It is not at all obvious that the puzzle can be constructed in
Euclidean space using physical objects.  However, when printed in
plastic the Meteor \emph{is} possible to assemble.  Also, when
complete it holds together with no other support.  For photos see Dc45
Meteor in \refapp{Catalog}.  Apparently a small amount of flex in the
ribs is necessary; we have not been able to solve a similar puzzle
(the Dc30 Ring) when printed in a steel/bronze composite.

It came as a surprise to us that there are numerous other burr puzzles
based on these ribs; most are not based on the combinatorial Hopf
fibration.  We list many of our discoveries in \refapp{Catalog}.  In
the remainder of this section we derive a combinatorial restriction on
the ribs that can be used in any burr puzzle.  This theorem is sharp,
as shown by the examples in \refapp{Catalog}.

\begin{theorem}
\mbox{}
\begin{enumerate}
\item At most six inner ribs are used in any puzzle. \label{6 inner ribs}
\item At most six outer ribs are used in any puzzle. \label{6 outer ribs}
\item At most ten inner and outer ribs are used in any puzzle. \label{10 ribs}
\end{enumerate}
\end{theorem}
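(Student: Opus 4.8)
The plan is to exploit the fact that the cell-centered stereographic projection of the $120$--cell distorts the dodecahedra of $\calT_{120}$ by an amount depending only on their spherical distance from the south pole, so that a rib --- being a rigid printed object --- can only be assembled into a position in which the sizes of its constituent dodecahedra agree with those of the cells it is to occupy. Each of the three bounds then reduces to counting cells in a single layer of \refprop{Ring}.

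First I would record how the ribs meet the layers of $\calT_{120}$. Following \refsec{Ribs}, the ribs are cut from the twelve rings of \refsec{Rings} by discarding every dodecahedron lying strictly north of the equatorial sphere. Reading off \refprop{Ring}, together with the cyclic order in which the layers occur around a ring --- each ring is a chain of ten cells whose distance from $1$ increases monotonically from a minimum to the antipodal maximum and back --- one finds: every inner rib (inner $4$ or inner $6$) is a chain of cells lying, reading inward from its two ends, in the equatorial, southern temperate, and antarctic layers, the inner $4$ simply omitting the two equatorial cells; and every outer rib (outer $4$ or outer $6$) is a chain of cells lying, reading inward, in the equatorial, tropic of Capricorn, and southern temperate layers, the outer $4$ omitting the two equatorial cells. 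In particular every inner rib contains exactly two antarctic cells and exactly two southern temperate cells, while every outer rib contains exactly two cells of the tropic of Capricorn and exactly two southern temperate cells.

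Next comes the rigidity step, which is the heart of the matter. Assembling the ribs of a puzzle forces the faces of abutting dodecahedra to coincide, so the union of all the dodecahedra that appear is a sub-pattern of a single copy $M \subset \RR^3$ of the stereographic image of $\calT_{120}$, and distinct ribs occupy disjoint cells of $M$. Placing a rib amounts to applying a Euclidean isometry that carries its standard position onto cells of $M$. Now the cells of $M$ fall into exactly nine congruence classes, one per layer: cells in a common layer are congruent, since the stabiliser of the central cell in $\calC$ acts on that layer by Euclidean isometries (after projection) and transitively (being the rotation group of the dodecahedron $V_1$; compare \reflem{Sym120}); and cells in distinct layers are incongruent, since stereographic projection is conformal with conformal factor $1/(1 + \cos\alpha)$ at spherical distance $\alpha$ from the south pole (compare the formula for $d\rho/d\alpha$ in \refsec{Ribs}), and this factor is strictly increasing on $(0,\pi)$, so the nine layers project to nine different scales. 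Therefore the isometry placing a rib must carry each cell of the rib to a cell of $M$ in the same layer. It is essential here that the antarctic and arctic layers, the two temperate layers, and the two tropics have pairwise different scales --- stereographic projection from the south pole is not symmetric about the equator --- so that, for instance, an inner rib genuinely consumes two antarctic cells of $M$ and cannot be reflected into the northern hemisphere to consume arctic cells instead.

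The three bounds now follow by counting cells of a fixed layer of $M$. The inner ribs of a puzzle occupy pairwise disjoint pairs of cells within the antarctic layer, which has $12$ cells, so there are at most $6$ inner ribs; this is~(\ref{6 inner ribs}). The outer ribs occupy pairwise disjoint pairs of cells within the tropic of Capricorn, again $12$ cells, so there are at most $6$ outer ribs; this is~(\ref{6 outer ribs}). Finally every inner or outer rib occupies a pair of cells within the southern temperate layer, which has $20$ cells, and all these pairs are pairwise disjoint, so there are at most $10$ inner and outer ribs in total; this is~(\ref{10 ribs}). The only step demanding real care is the rigidity observation, whose force rests on the asymmetry of stereographic projection; that all three bounds are sharp is witnessed by the puzzles catalogued in \refapp{Catalog}.
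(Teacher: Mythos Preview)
Your proof is correct and follows essentially the same route as the paper: establish that stereographically projected cells are congruent within a layer and incongruent between layers (via the monotone conformal factor), so that each rib must occupy cells in the expected layers; then count antarctic cells for~(\ref{6 inner ribs}), tropic of Capricorn cells for~(\ref{6 outer ribs}), and southern temperate cells for~(\ref{10 ribs}). Your rigidity step is spelled out more carefully than in the paper --- in particular you make explicit why a rib cannot be reflected into the northern hemisphere --- but the underlying idea is identical.
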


\begin{proof}
The stereographic projection map $\rho$ is equivariant: $\rho$
transports the twisted action on $S^3$ to the $\SO(3)$ action on
$\RR^3$.  That is, $\rho$ respects the $S^2$ symmetry about the
identity in $S^3$.  Thus any two cells in a given layer (at fixed
distance from the south pole) are congruent in $\RR^3$, after
projection.  Also, any pair of cells in different layers are
different, due to the growth of $d\rho/d\alpha$.

Examining the table in \refprop{Ring}, we see that the each inner rib
contains exactly two cells adjacent to the south pole.  By the table
in \refsec{Layers}, there are exactly 12 such cells.  Part (\ref{6
  inner ribs}) follows.  We prove part (\ref{6 outer ribs}) by
examining the layer at distance $2\pi/5$ and we prove part (\ref{10
  ribs}) using the layer at distance $\pi/3$.  A colour-coded guide is
provided in \reffig{CellsInRibs}.
\end{proof}

\section{Leonardo da Vinci's polytopes}
\label{Sec:Leonardo}


\begin{wrapfigure}[25]{r}{0.45\textwidth}
\vspace{3pt}
\centering 
\includegraphics[width=0.40\textwidth]{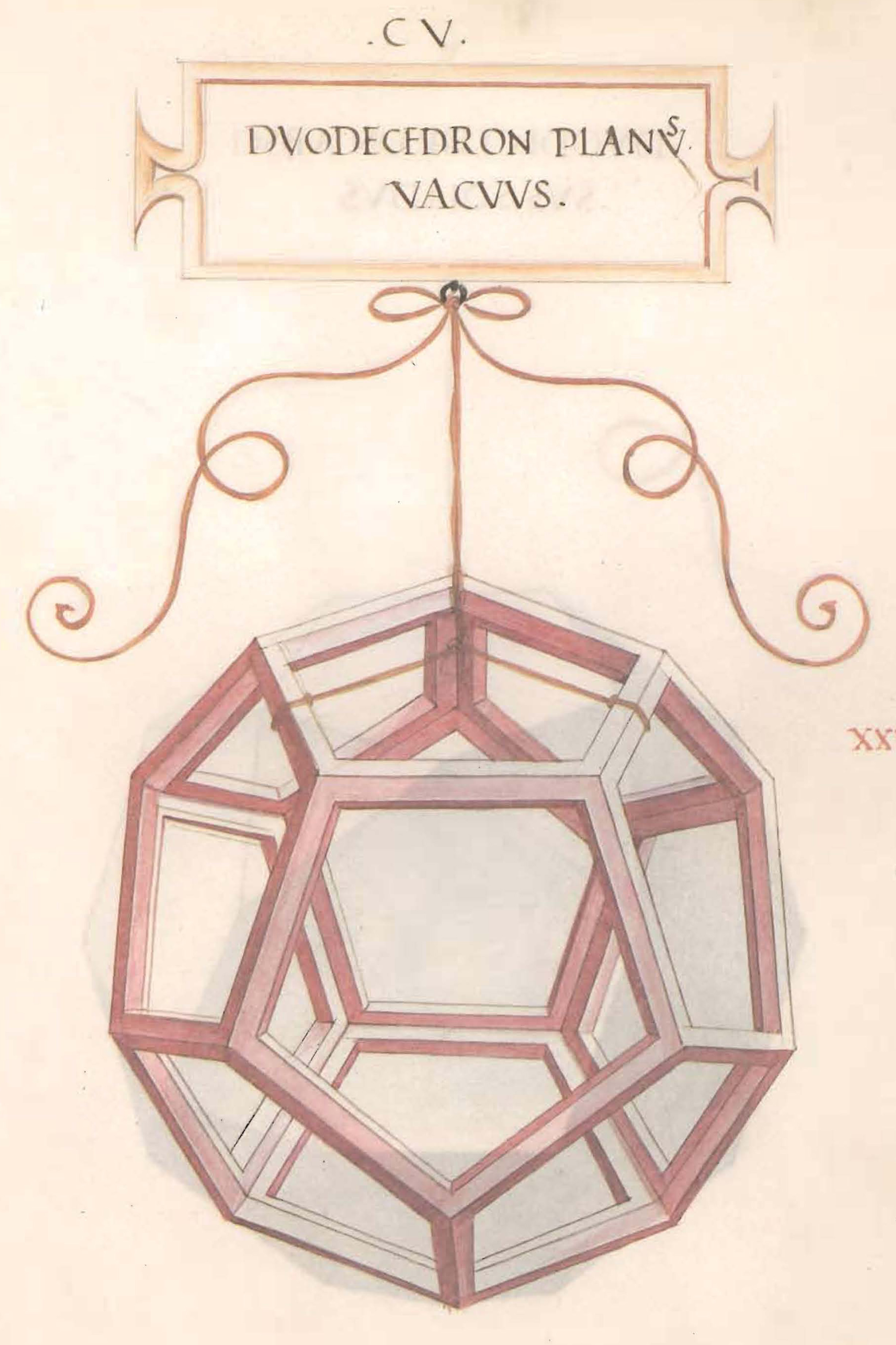}
\caption{The dodecahedron, as drawn by Leonardo da
  Vinci~\cite[Folio CV recto]{Leonardo09}.}
\label{Fig:Leonardo}
\end{wrapfigure}

If we use injection moulding to make the ribs, then the simplest route
would be to realise each rib as a union of solid dodecahedra.
However, since we are 3D printing the ribs, we are able to reduce
costs by hollowing out the dodecahedra. Our design is closely related
to Leonardo da Vinci's technique for drawing polytopes. See
\reffig{Leonardo}.

Da Vinci's design retains all of the symmetry of the dodecahedron
itself.  Since the dodecahedron is regular, we need only determine the
design inside of a single flag tetrahedron.  Then the symmetries of
the dodecahedron copy this geometry to all other flag tetrahedra,
recreating the entire design.  We do something very similar, by
constructing our design inside of a spherical flag polytope of the spherical
120-cell, $\calT_{120}$.

We have two versions of the design in the flag tetrahedron for
$\calT_{120}$, depending on whether or not the flag meets a boundary
pentagonal face of the rib, or meets an internal pentagon between two
adjacent dodecahedra.  See Figures \ref{Fig:PolytopeDesign} and
\ref{Fig:FaceDesign}.  In the former case, we add a surface in the
pentagonal face to separate the inside of the rib from the outside.
This is not necessary in the latter case.  The ``outer'' parts of the
geometry of the ribs are identical (in $S^3$) for all dodecahedra in
our ribs.  For reasons of cost and strength, we slightly thicken the
internal geometry of the smaller dodecahedra closer to the south pole,
and thin that of those further from the south pole.  Note that
\reffig{120} is modelled similarly, using only the internal design.

\begin{figure}[htbp]
\centering 
\hspace{-1cm}
\subfloat[Geometry for an external face of a puzzle piece within the
 spherical flag tetrahedron.]
{
\includegraphics[width=0.43\textwidth]{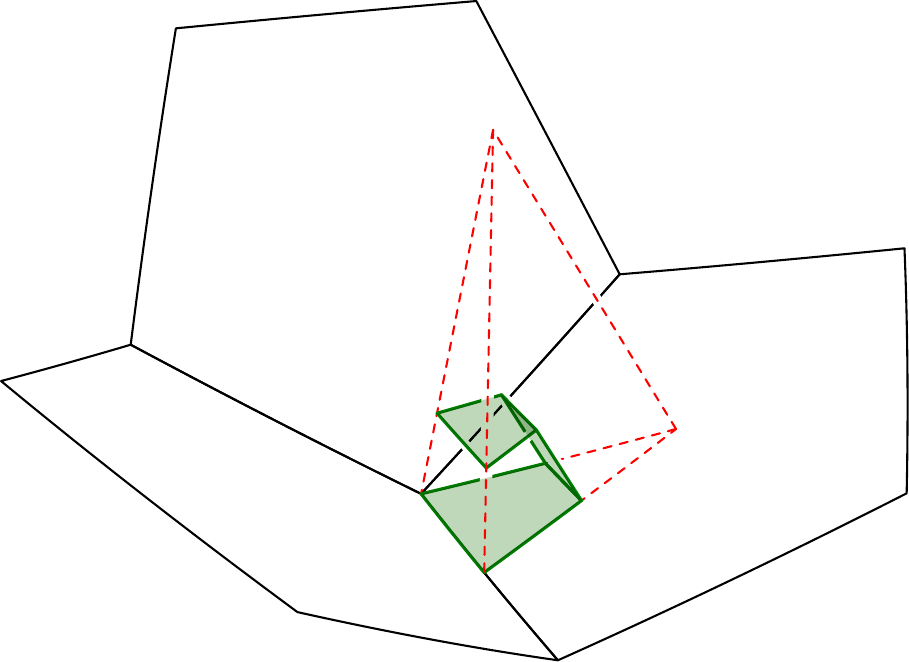}
\label{Fig:FlagPolytopeExternal}
} 
\qquad
\subfloat[Geometry for an internal face of a puzzle piece within the
  spherical flag tetrahedron.]
{
\includegraphics[width=0.43\textwidth]{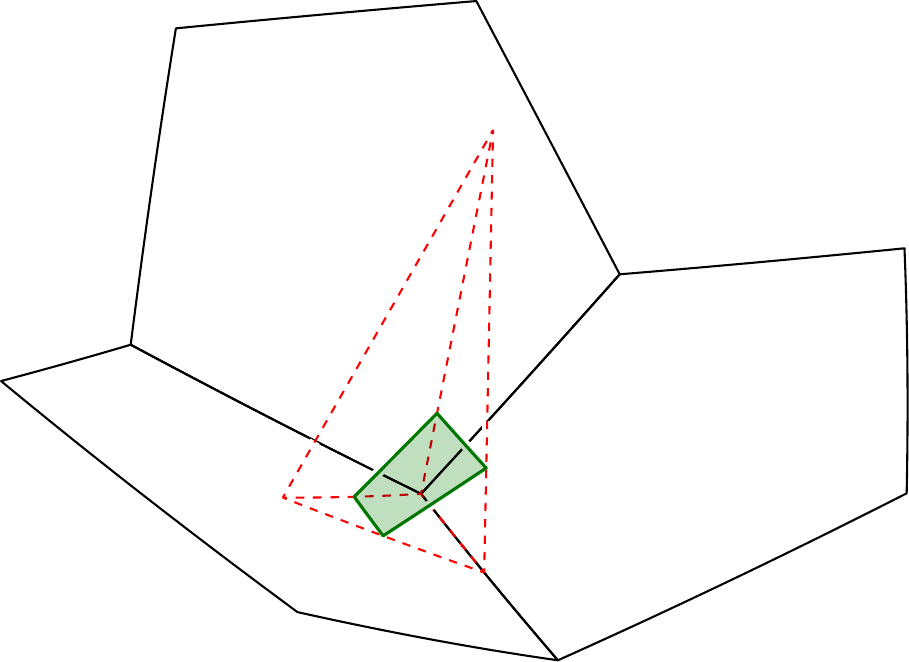}
\label{Fig:FlagPolytopeInternal}
}
\caption{The two versions of the spherical flag polytope design.  Here this is the tetrahedron drawn with a dashed line.  We show only
  three faces of the central dodecahedron of the stereographic
  projection to $\RR^3$.}
\label{Fig:PolytopeDesign}
\end{figure}

\begin{figure}[htbp]
\centering 
\hspace{-1cm}
\subfloat[Twenty copies of the external design, forming two faces of a
  dodecahedron drawn in the Da Vinci style.]
{
\includegraphics[width=0.43\textwidth]{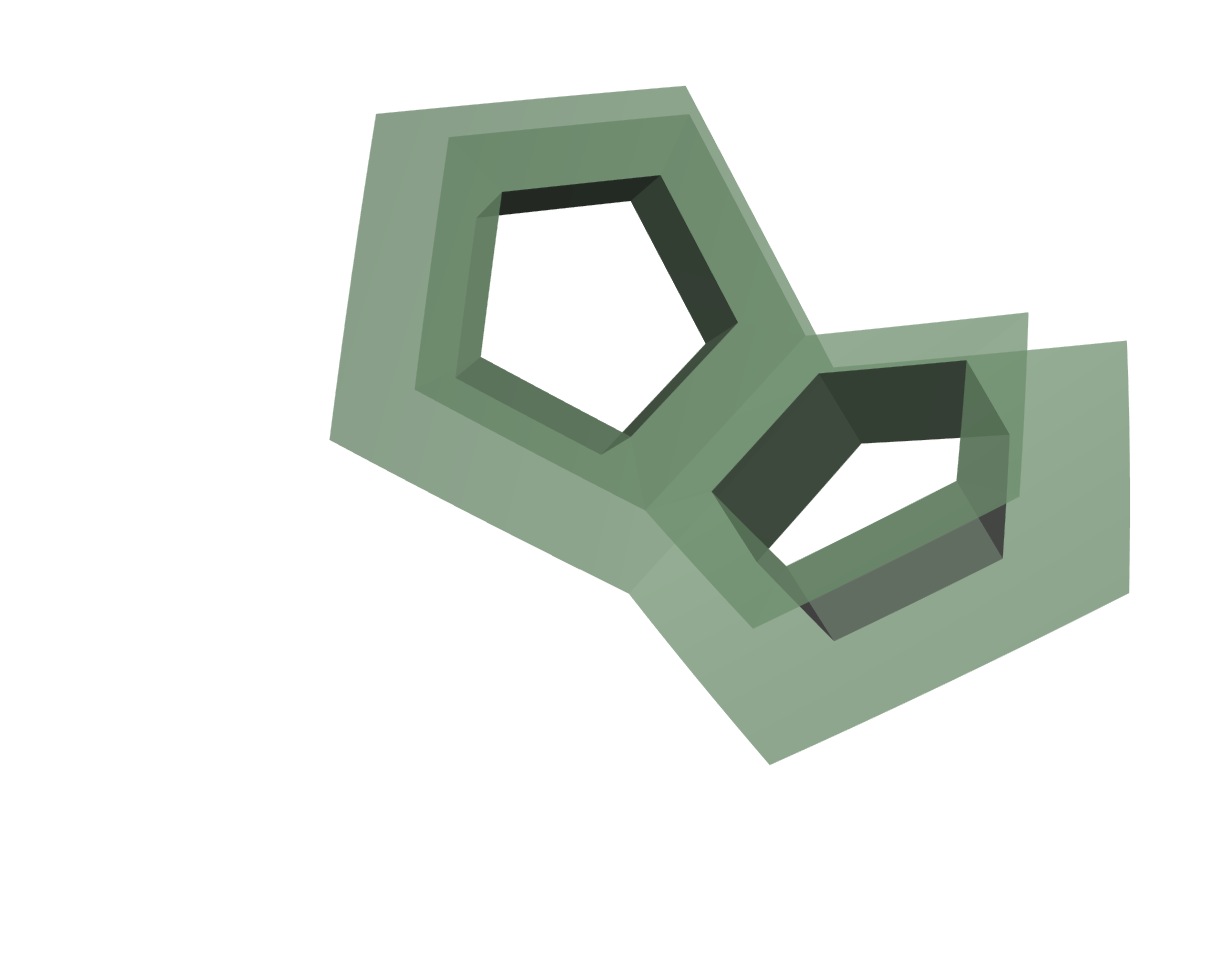}
\label{Fig:CopiesFlagPolytopeExternal}
} 
\qquad
\subfloat[Twenty copies of the external design and twenty copies of
  the internal design, forming two faces of two adjacent dodecahedra,
  and the face between those dodecahedra, drawn in the Da Vinci
  style.]
{
\includegraphics[width=0.43\textwidth]{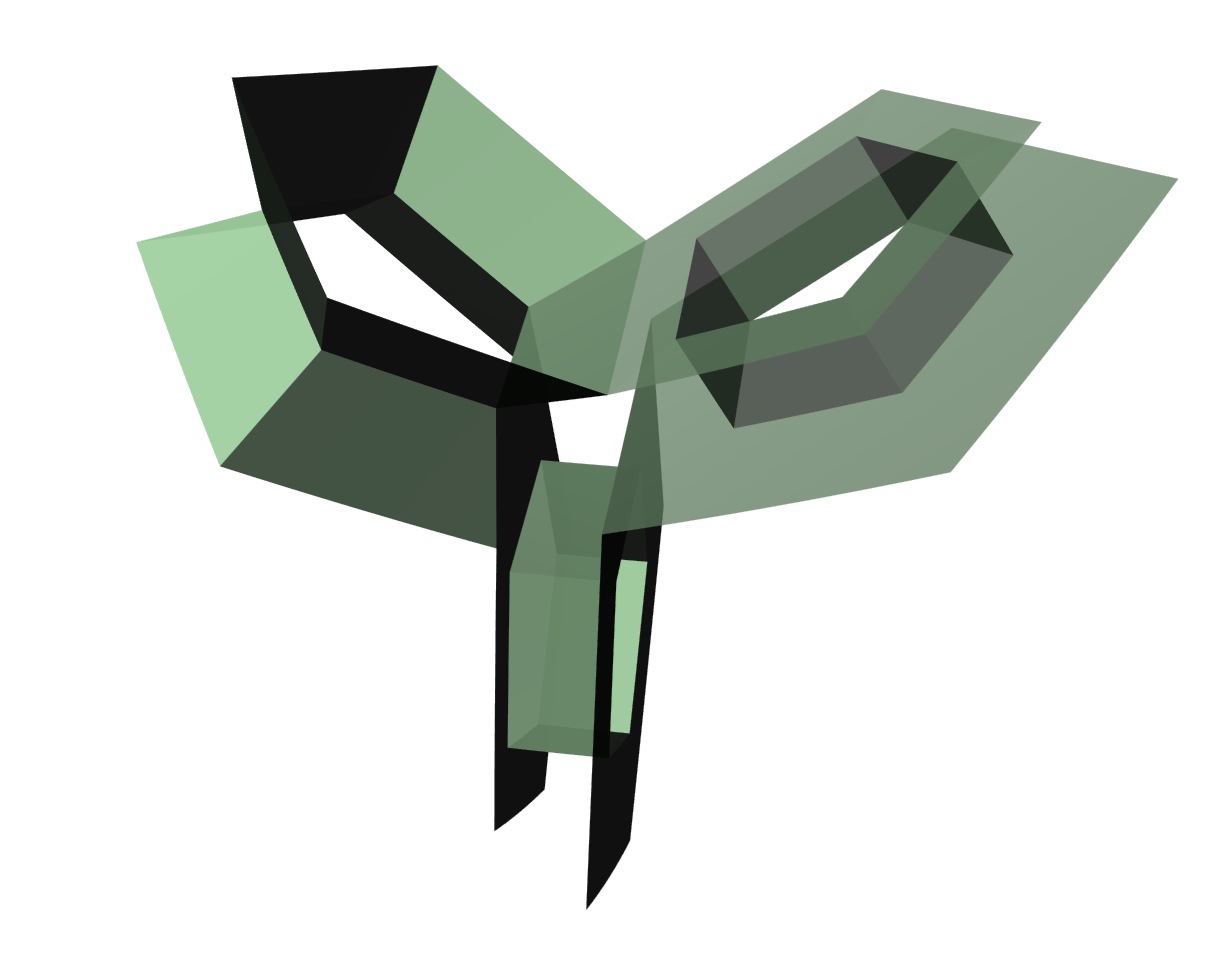}
\label{Fig:CopiesFlagPolytopeInternal}
}
\caption{Two examples of how the external and internal face designs
  fit together to form the geometry of the rib puzzle pieces.}
\label{Fig:FaceDesign}
\end{figure}

\bibliographystyle{hyperplain} 
\bibliography{bibfile}

\vfill

\appendix
\section{Catalog}
\label{App:Catalog}

When trying to build a puzzle out of the ribs, there is a spectrum of
possibilities.  At one end there are constructions that hold together
so loosely that a small tap causes them to fall apart.  At the other
end there are puzzles that hold together so tightly that there seems
to be no way to assemble them without applying large amounts of force.
Below we list those puzzles, avoiding both ends of this spectrum, that
we find visually pleasing.  Please let us know of any others you find!

\begin{remark*}
The designation Dc$N$ at the beginning of each puzzle stands for
``dodecahedral cell-centred'' and $N$ counts the number of cells.
Using other polytopes, such as the $600$--cell, would lead to puzzles
with different unit cells, such as tetrahedra.  Using other projection
points would lead to, say, vertex-centred puzzles.
\end{remark*}


\newpage
\begin{tabular}{lc}
Dc24 Star & \multirow{6}{*}{
\includegraphics[height=82pt]{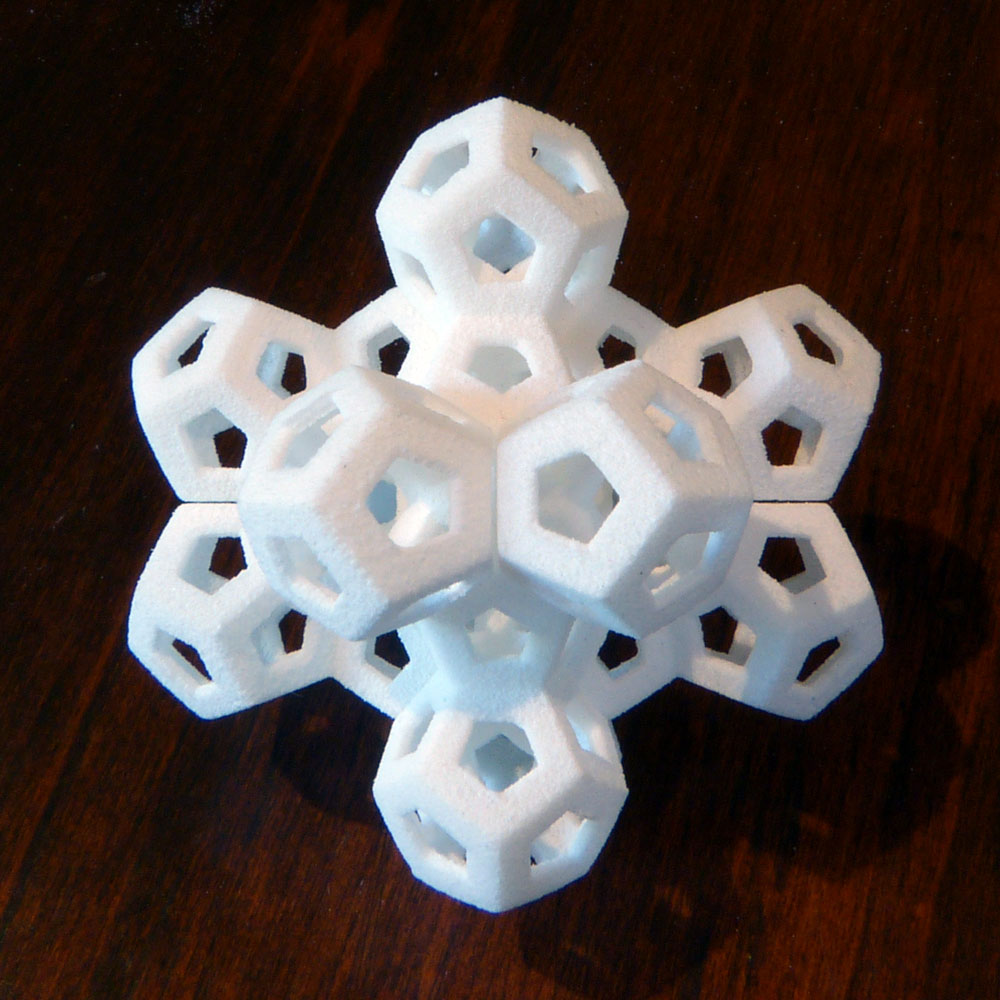}
\includegraphics[height=82pt]{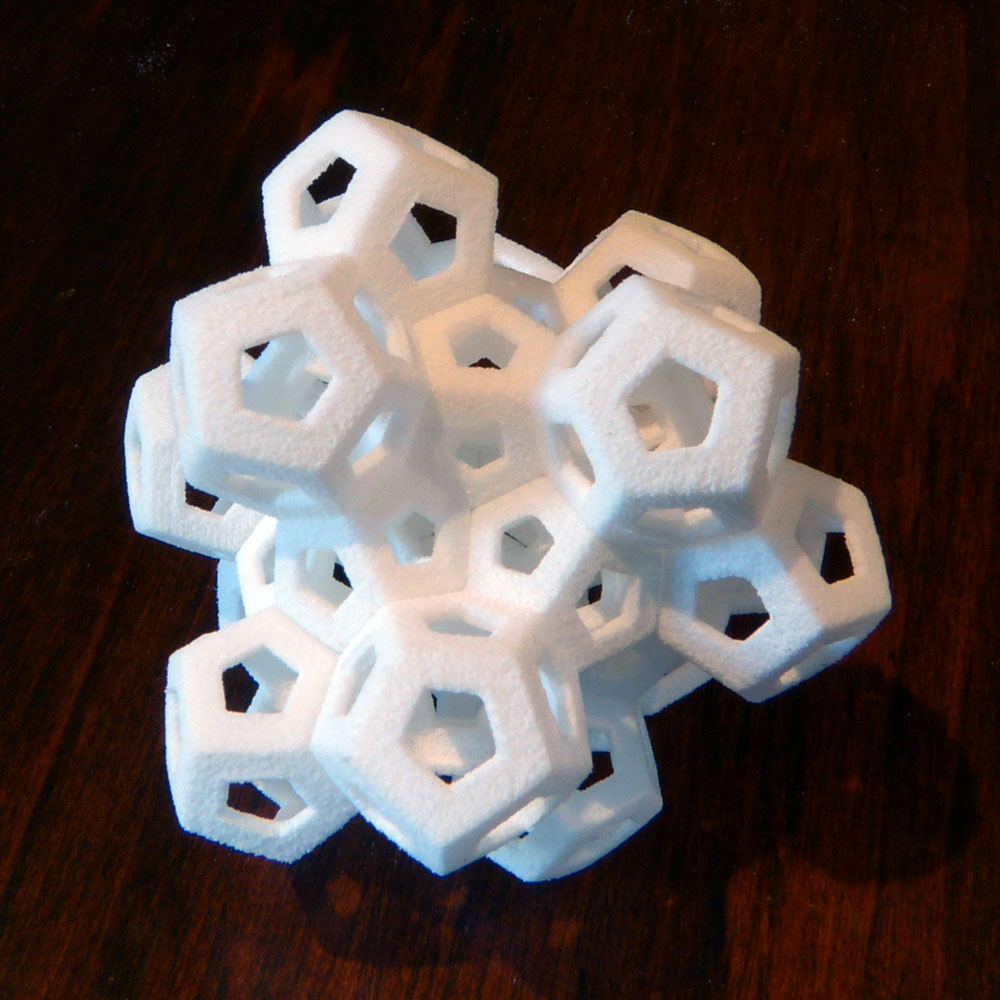}}\\
 \cline{1-1}
  $6\times \text{inner four}$ &\\
 \cline{1-1}
  \footnotesize{Up to three ribs}&\\
  \footnotesize{can be replaced}&\\
  \footnotesize{by inner sixs.}&\\
  \footnotesize{}&\\
  \footnotesize{}&\\

Dc24 Pulsar &\multirow{6}{*}{
\includegraphics[height=82pt]{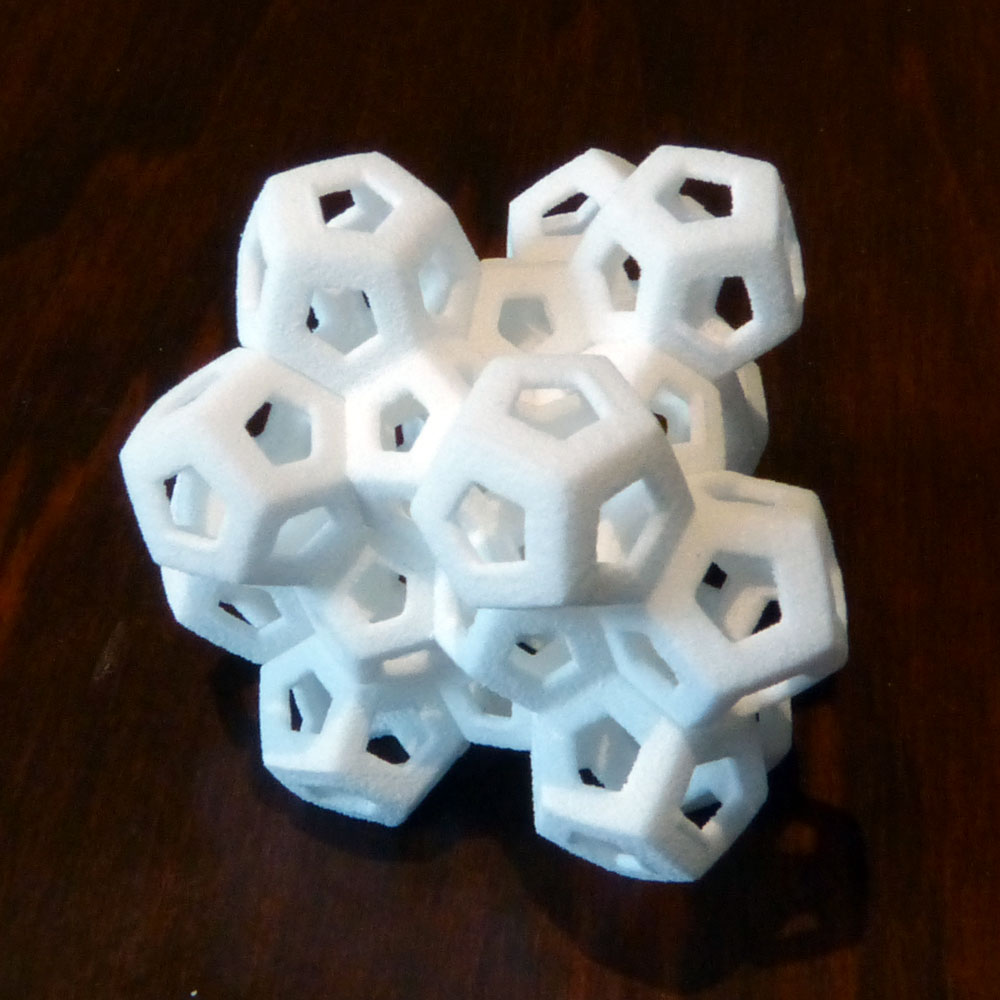}
\includegraphics[height=82pt]{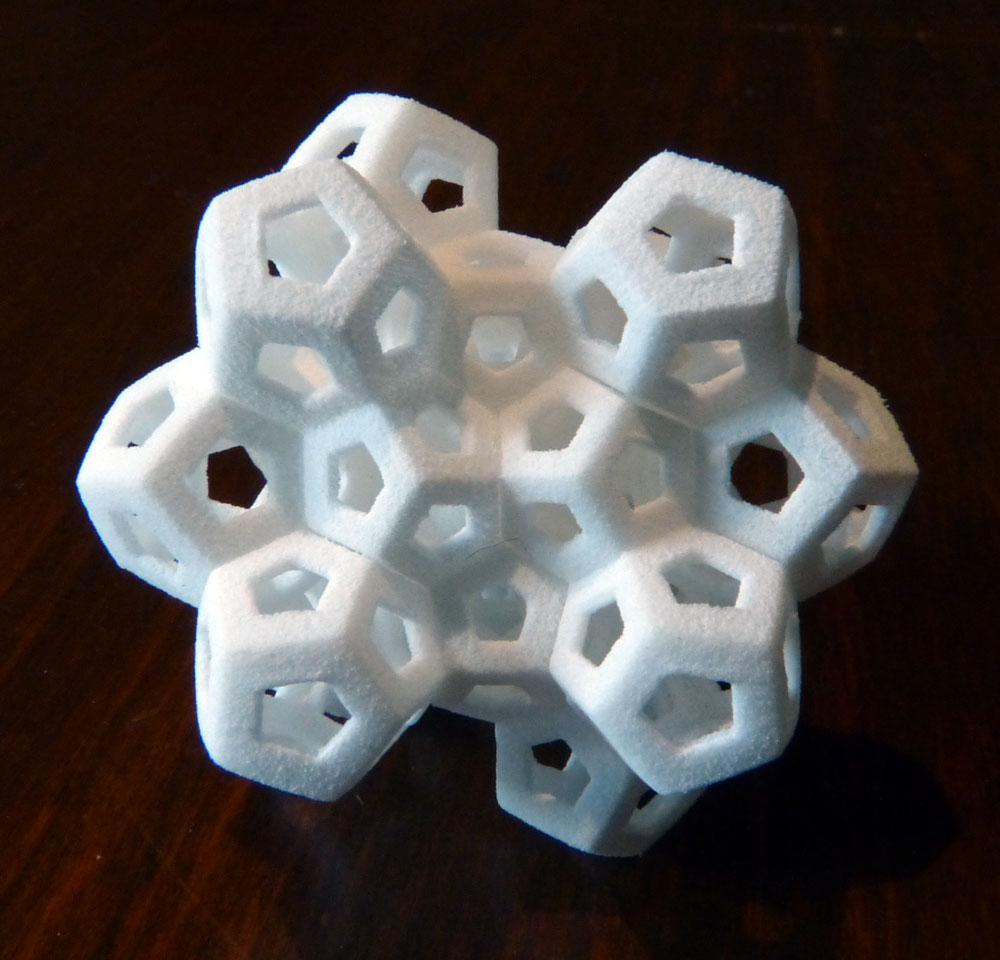}
\includegraphics[height=82pt]{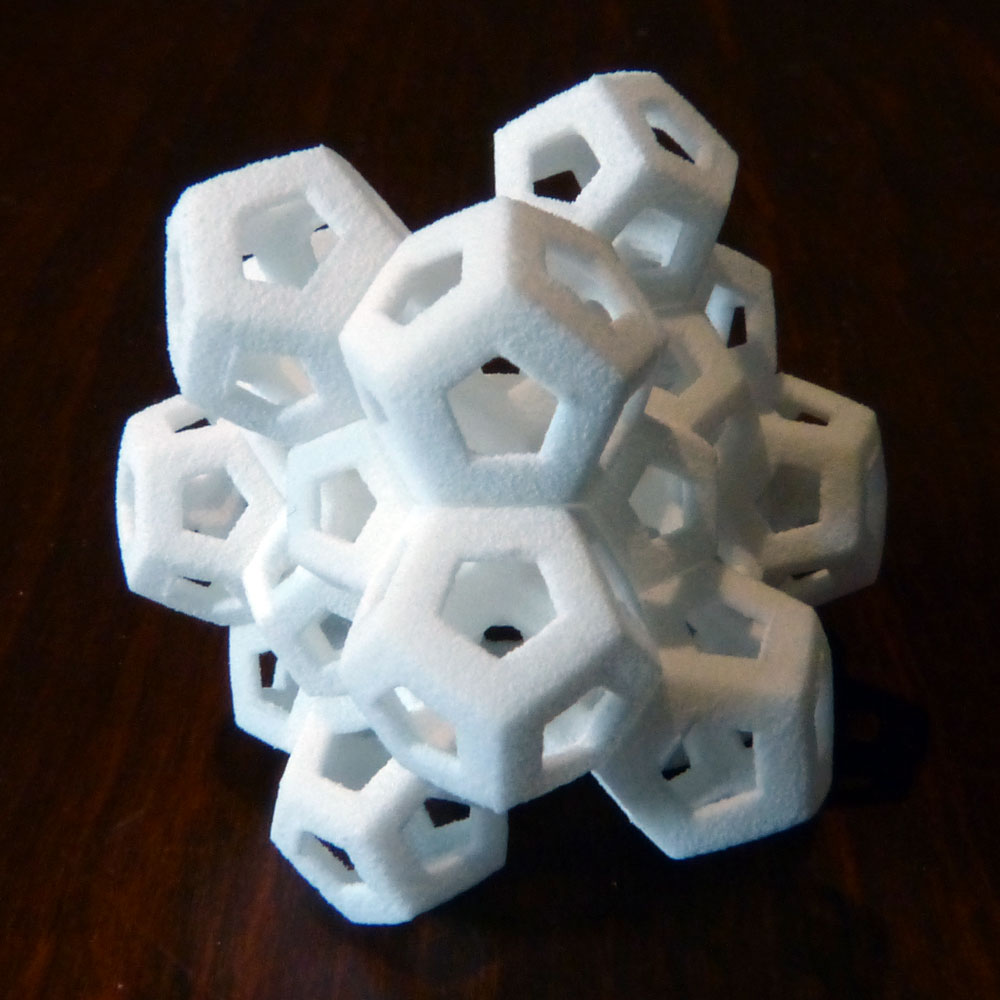}
\includegraphics[height=82pt]{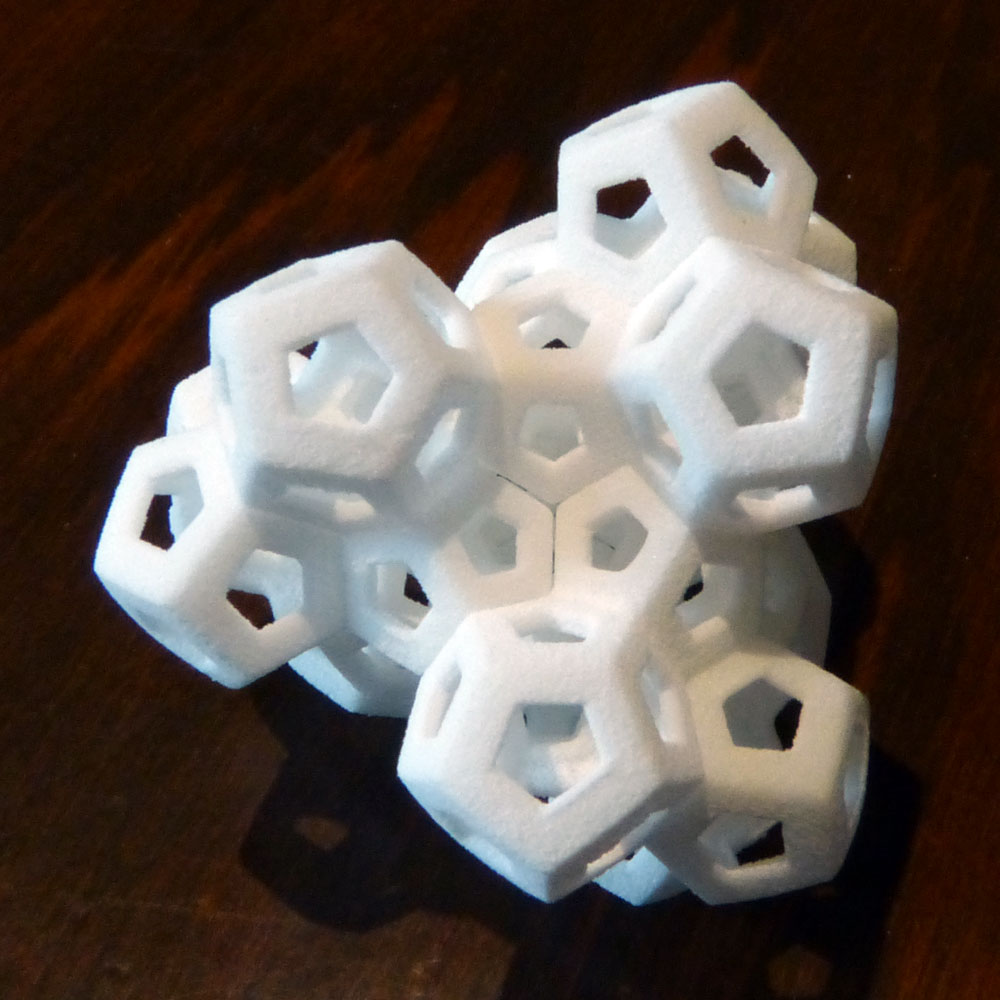}
}\\
 \cline{1-1}
  $6\times \text{inner four}$ &\\
 \cline{1-1}
  \footnotesize{Any number of ribs}&\\
  \footnotesize{can be replaced by}&\\
  \footnotesize{inner sixs.}&\\
  \footnotesize{}&\\
  \footnotesize{}&\\

 Dc29 Space Invader &\multirow{6}{*}{
\includegraphics[height=82pt]{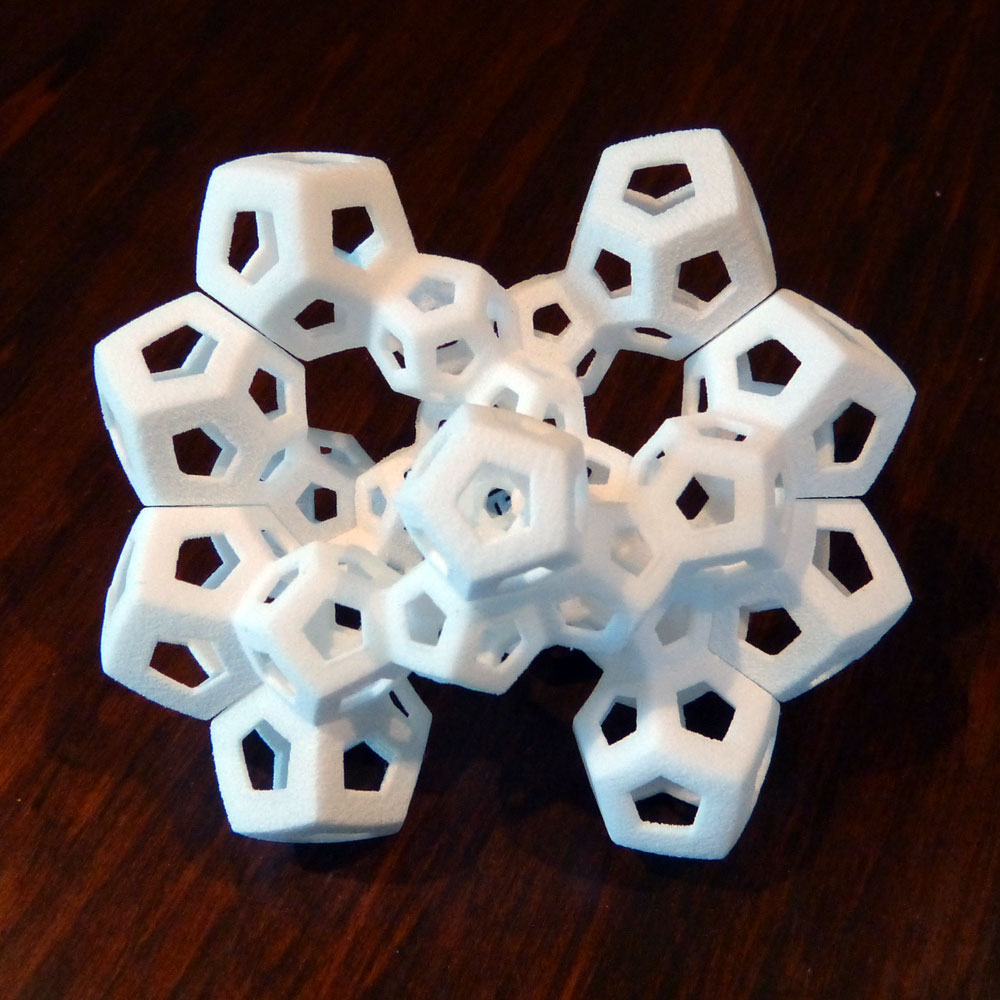}
\includegraphics[height=82pt]{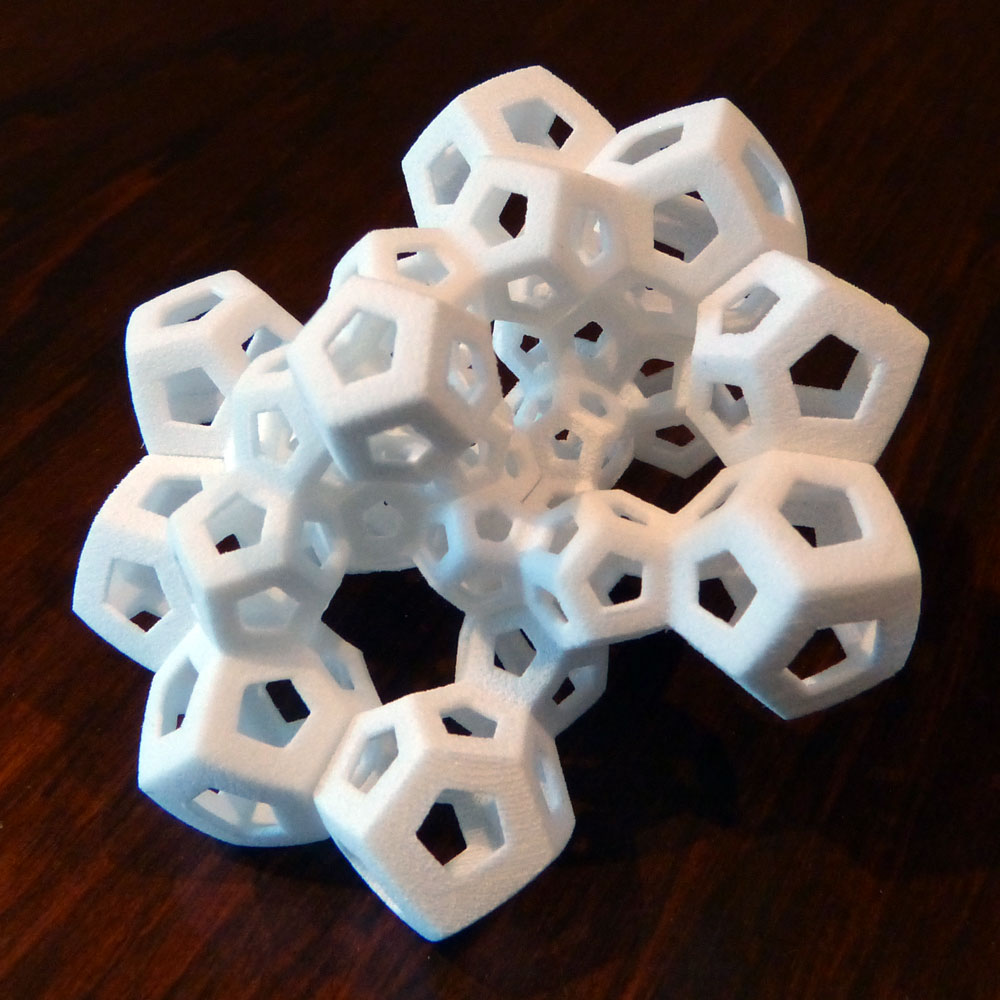}
\includegraphics[height=82pt]{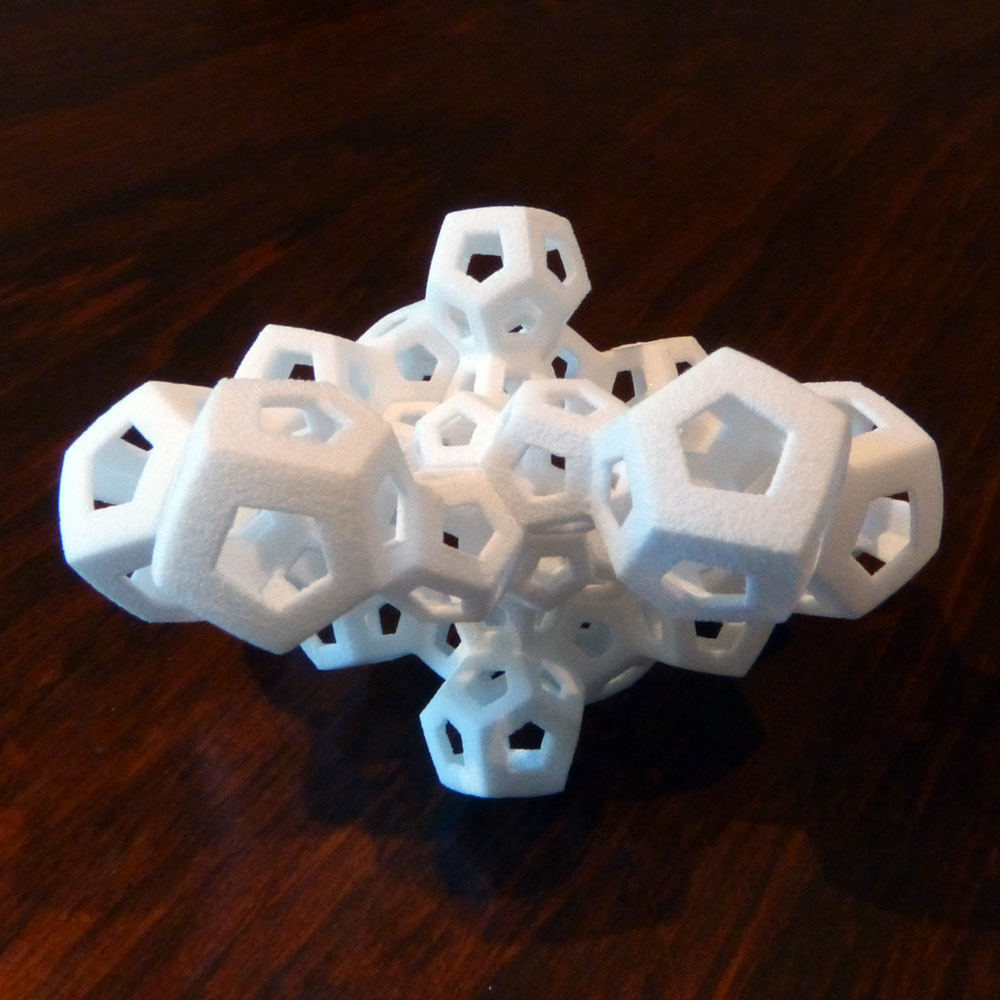}
\includegraphics[height=82pt]{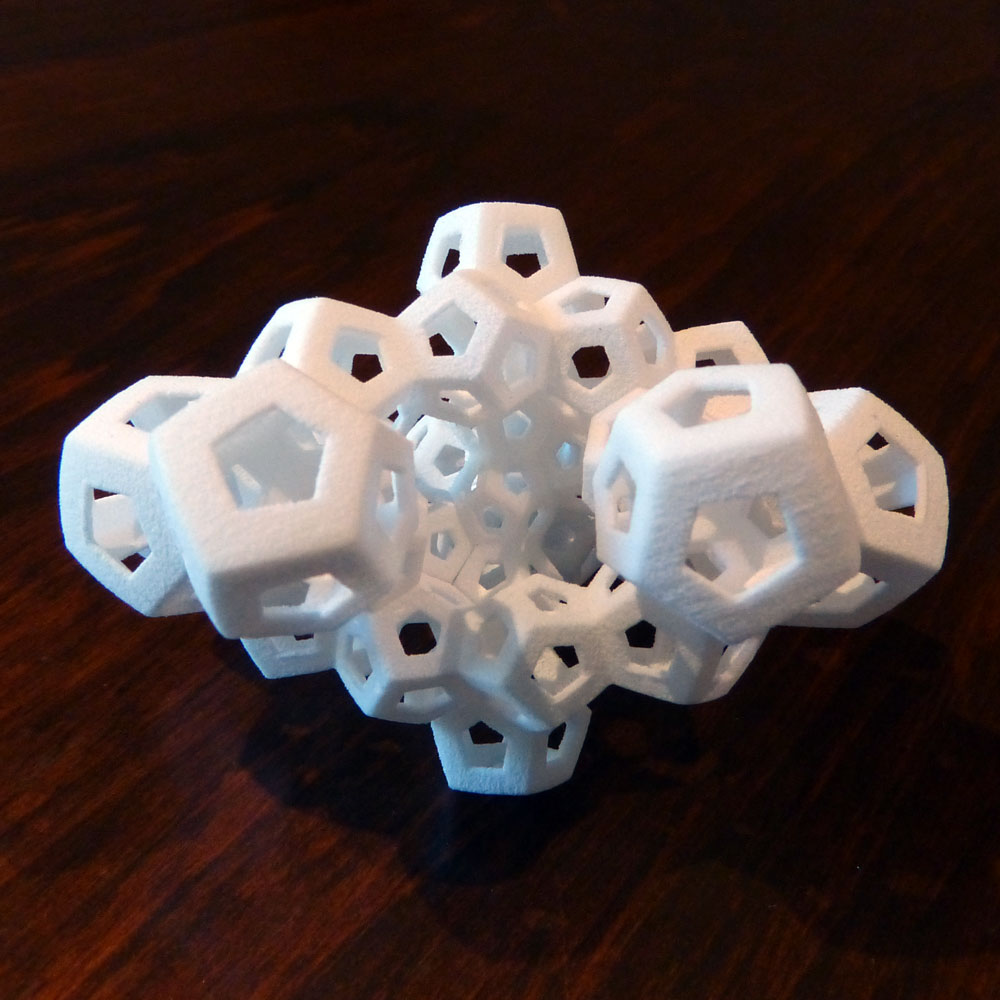}
}\\
 \cline{1-1}
  $2\times \text{inner six}$ &\\
  $2\times \text{outer six}$ &\\
  $1\times \text{spine}$&\\
 \cline{1-1}
  \footnotesize{Can add $2\times\text{equator}$.}&\\
  \footnotesize{}&\\
  \footnotesize{}&\\

Dc30 Star &\multirow{6}{*}{
\includegraphics[height=82pt]{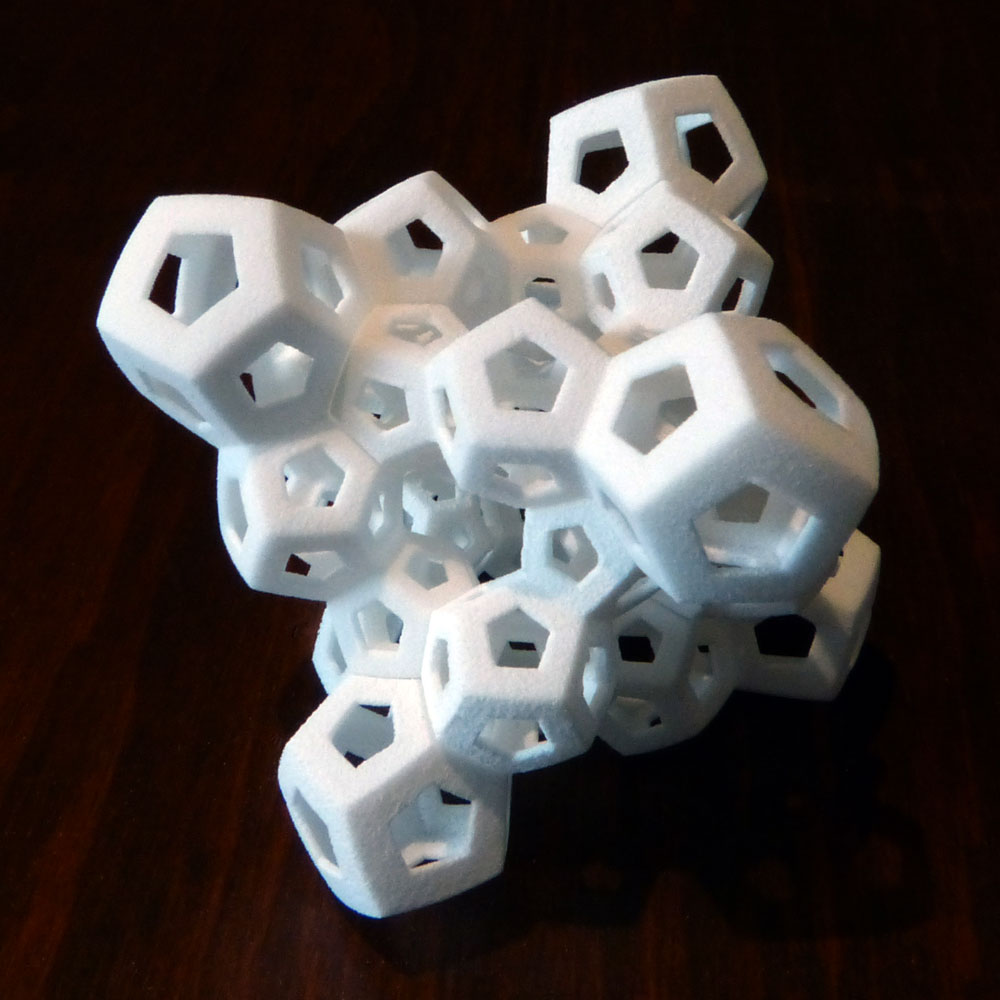}
\includegraphics[height=82pt]{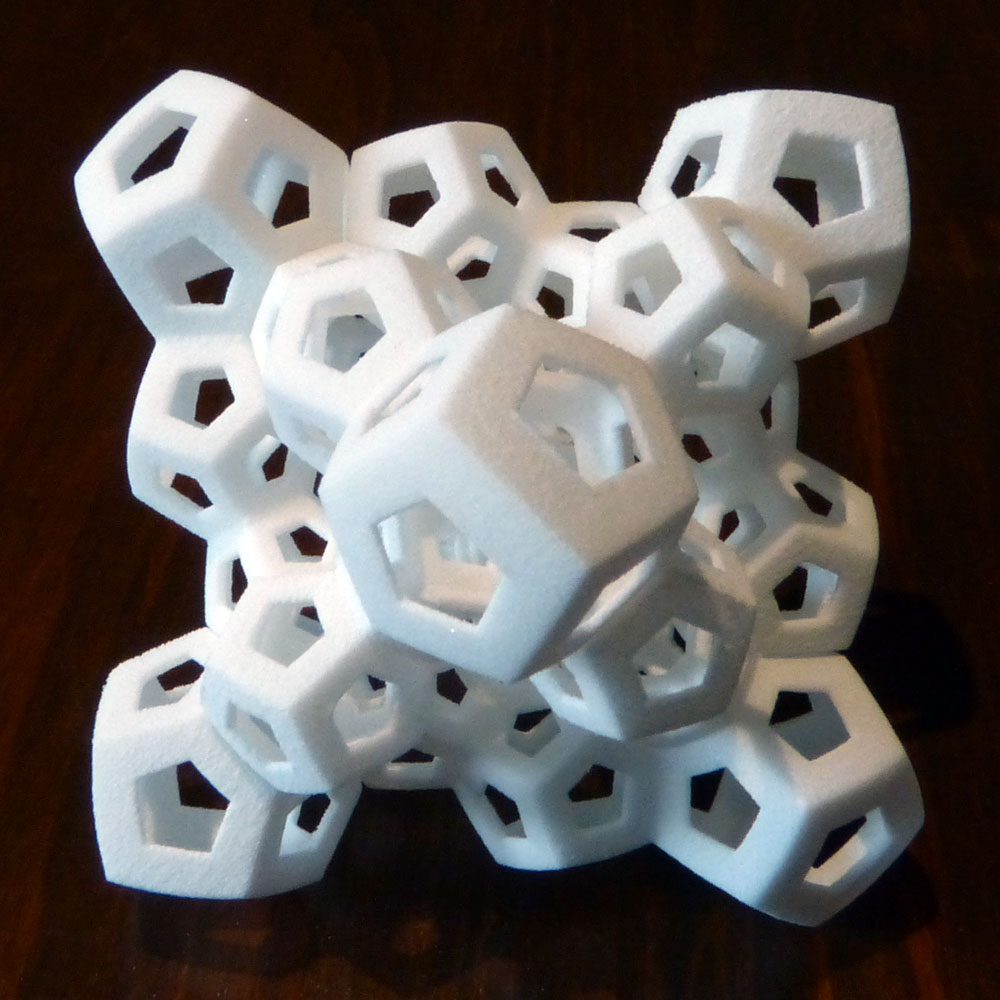}
\includegraphics[height=82pt]{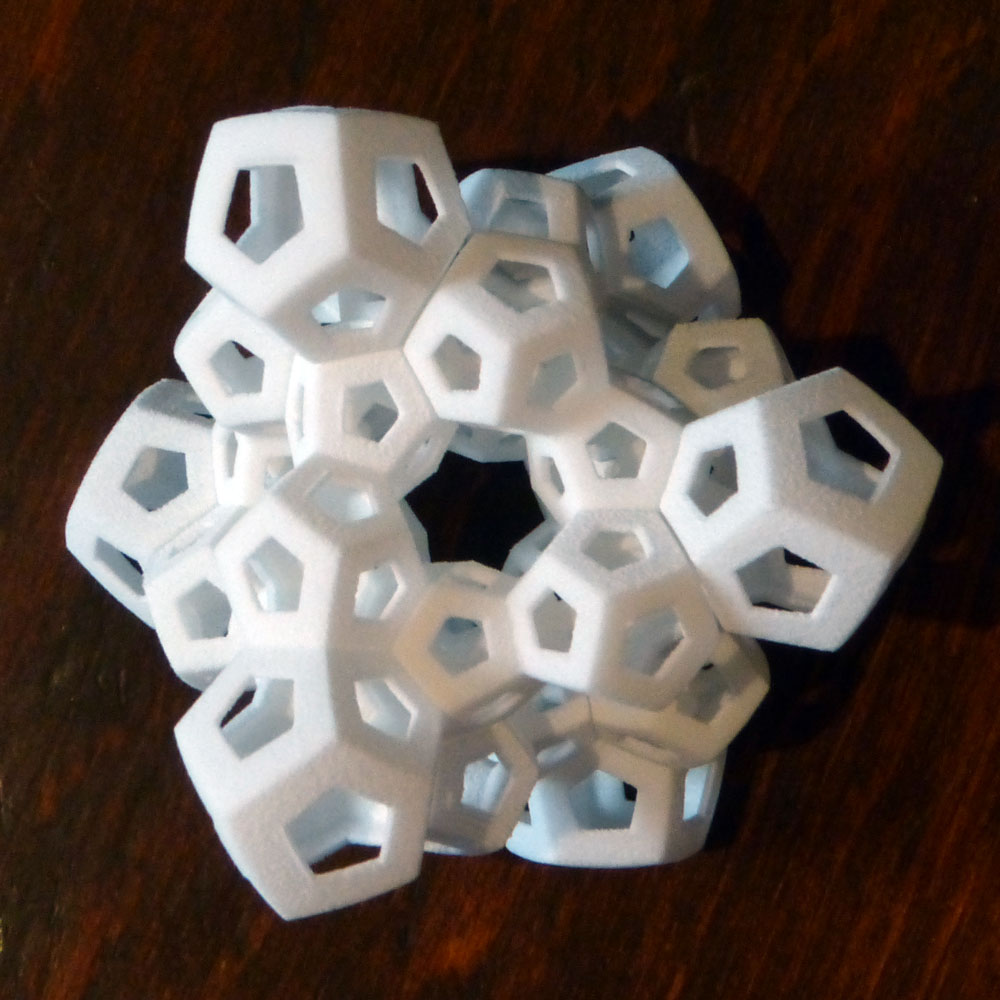}
}\\
 \cline{1-1}
  $3\times \text{outer four}$ &\\
  $3\times \text{outer six}$ &\\
  \footnotesize{}&\\
  \footnotesize{}&\\
  \footnotesize{}&\\
  \footnotesize{}&\\

 Dc30 Ring & \multirow{6}{*}{
\includegraphics[height=82pt]{Figures/D30_outer_wreathe_1_small.jpg}
\includegraphics[height=82pt]{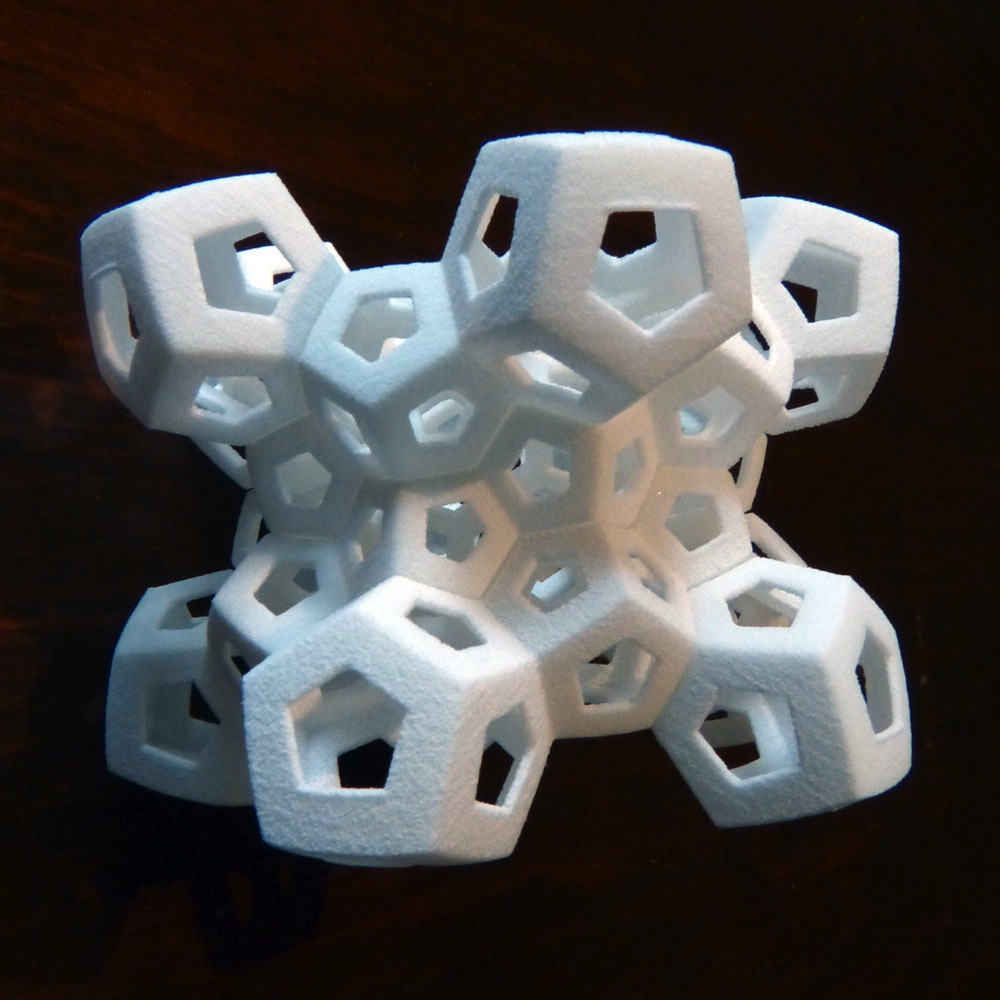}
\includegraphics[height=82pt]{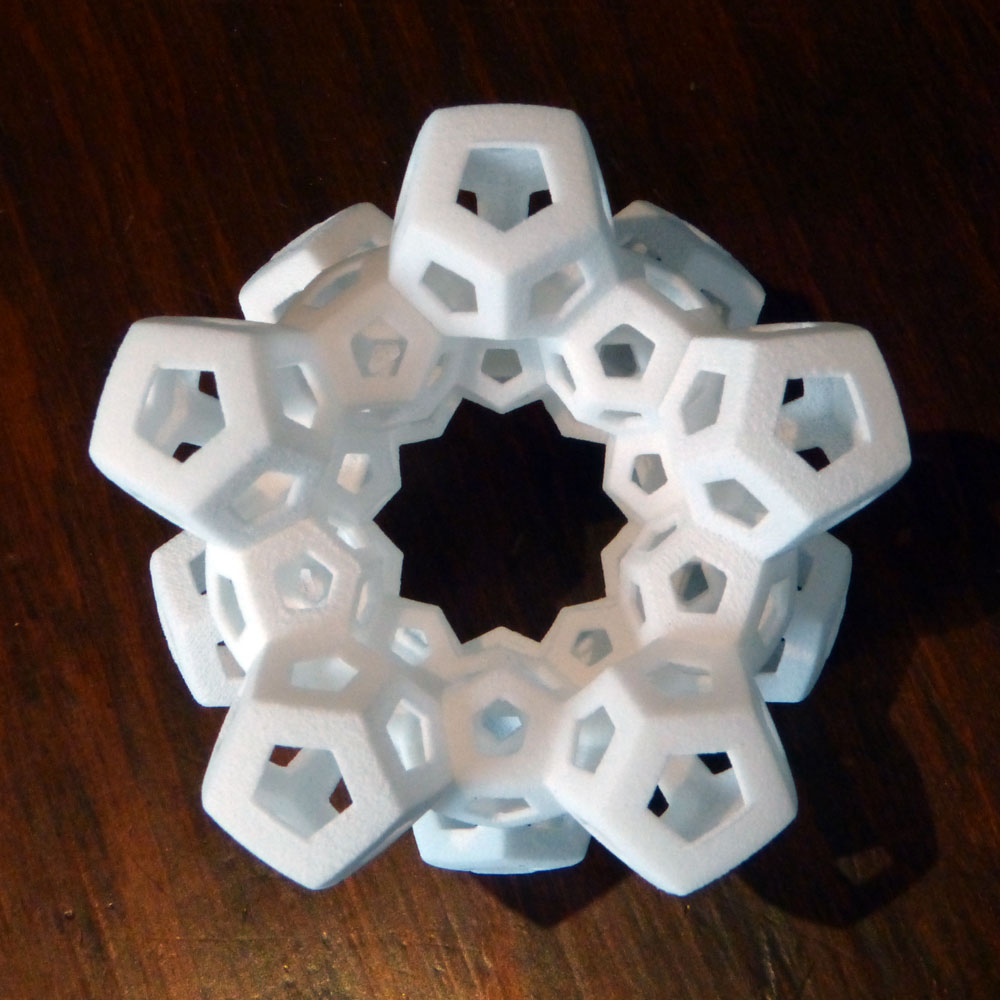}
}\\
 \cline{1-1}
  $5\times \text{outer six}$ &\\
 \cline{1-1}
  \footnotesize{Replace all ribs with}&\\
  \footnotesize{inner sixs to get the}&\\
  \footnotesize{Inner Ring.}&\\
  \footnotesize{}&\\
  \footnotesize{}&\\

 Dc30 Comet & \multirow{6}{*}{
\includegraphics[height=82pt]{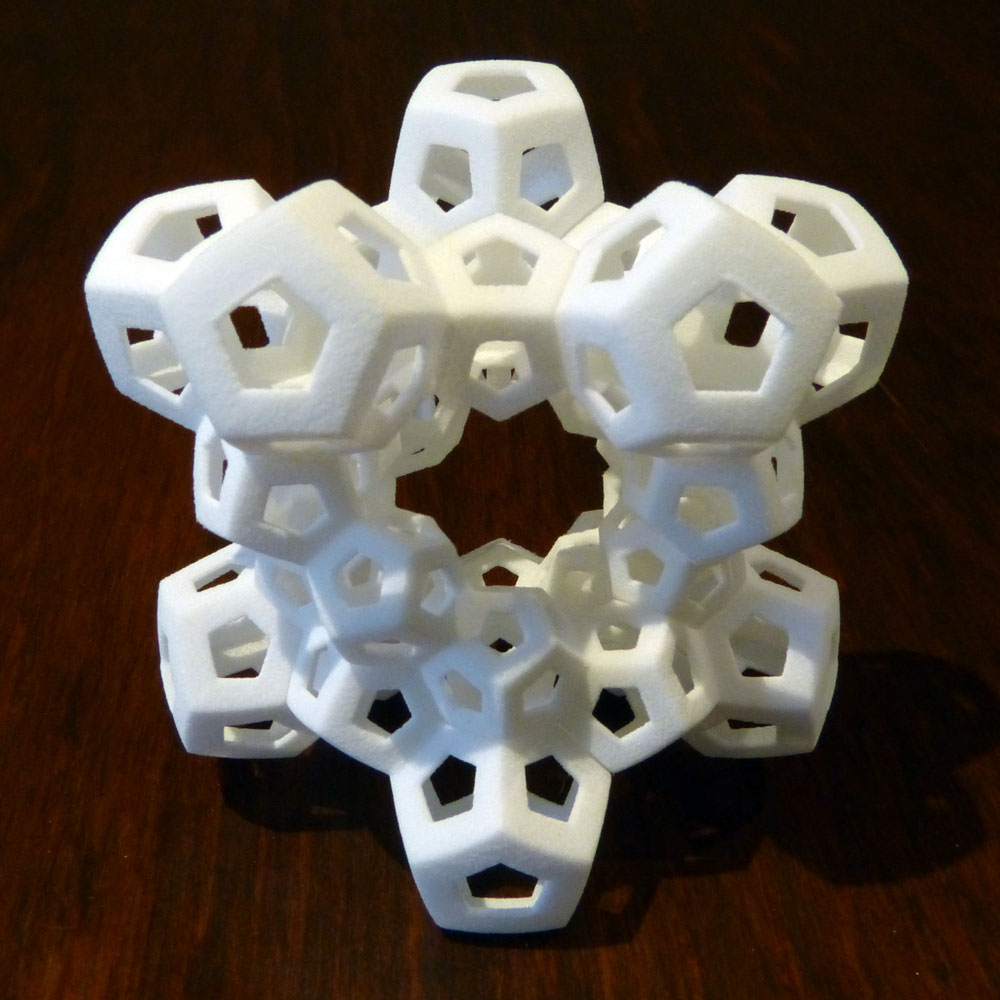}
\includegraphics[height=82pt]{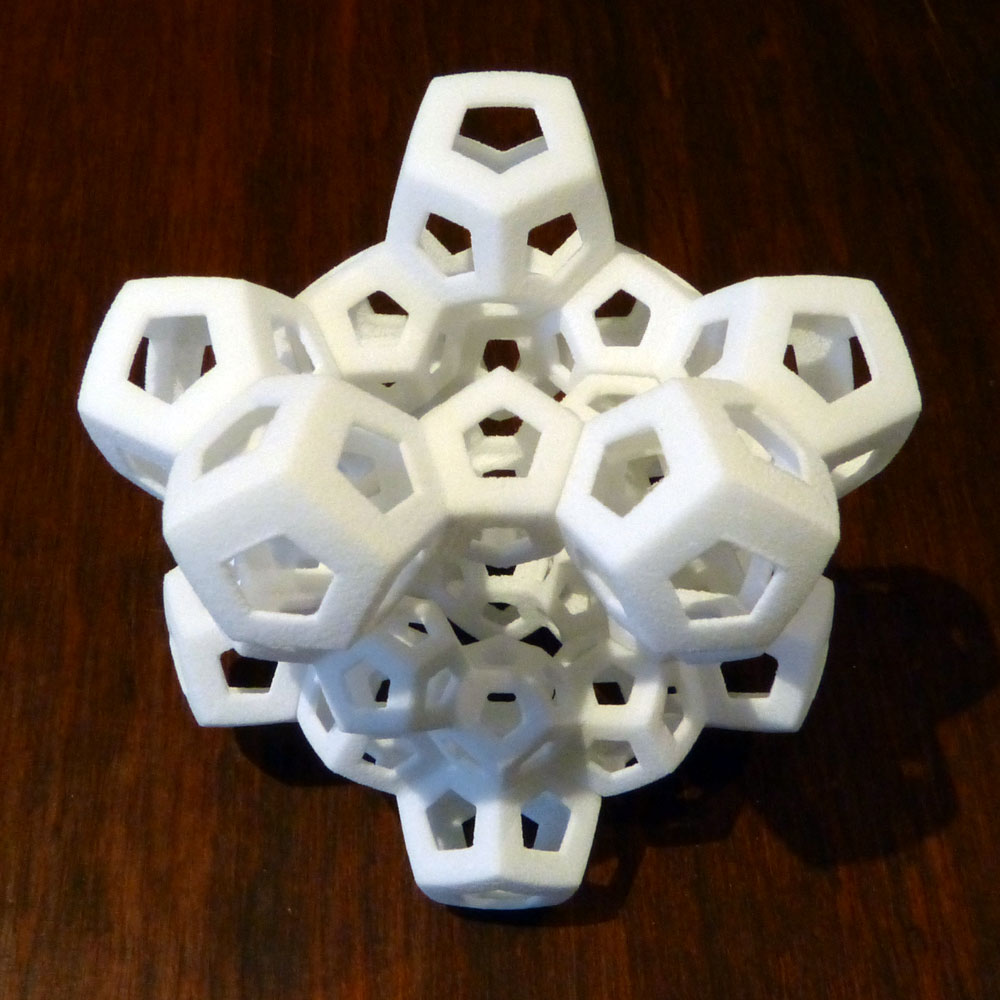}
\includegraphics[height=82pt]{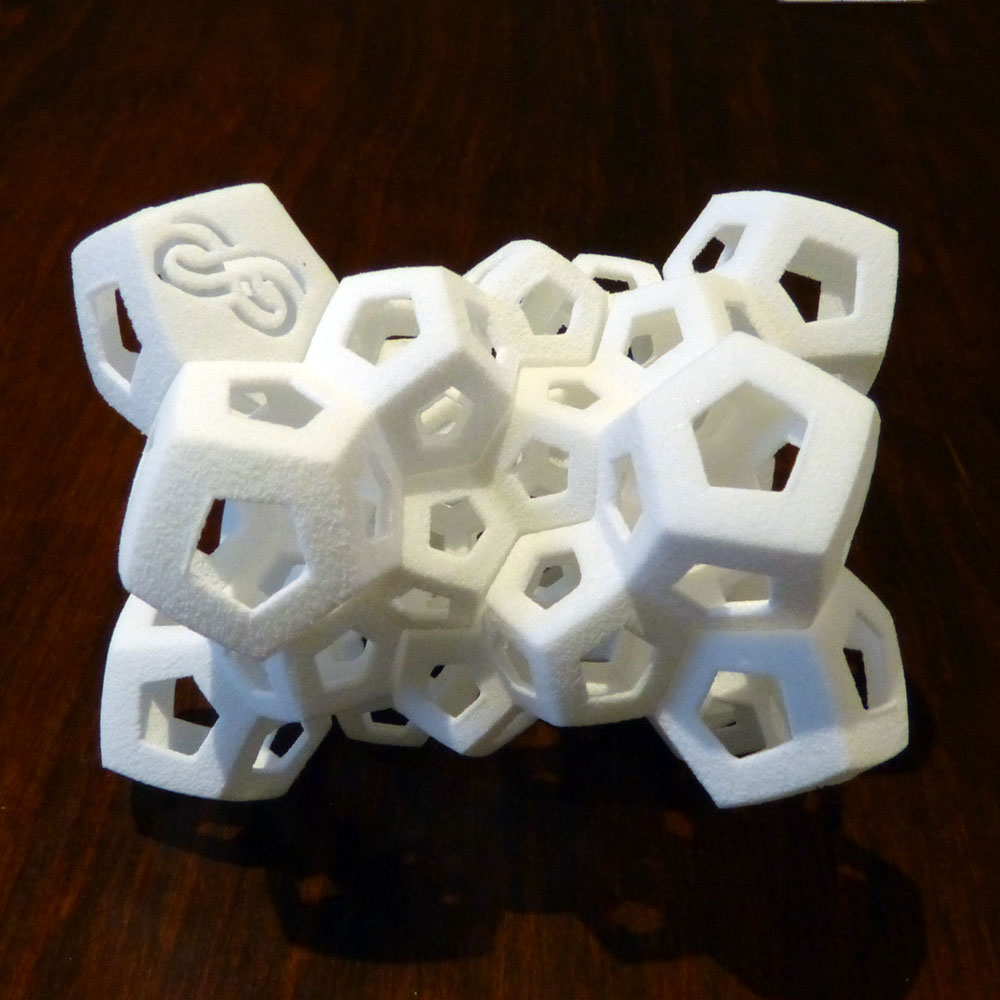}
\includegraphics[height=82pt]{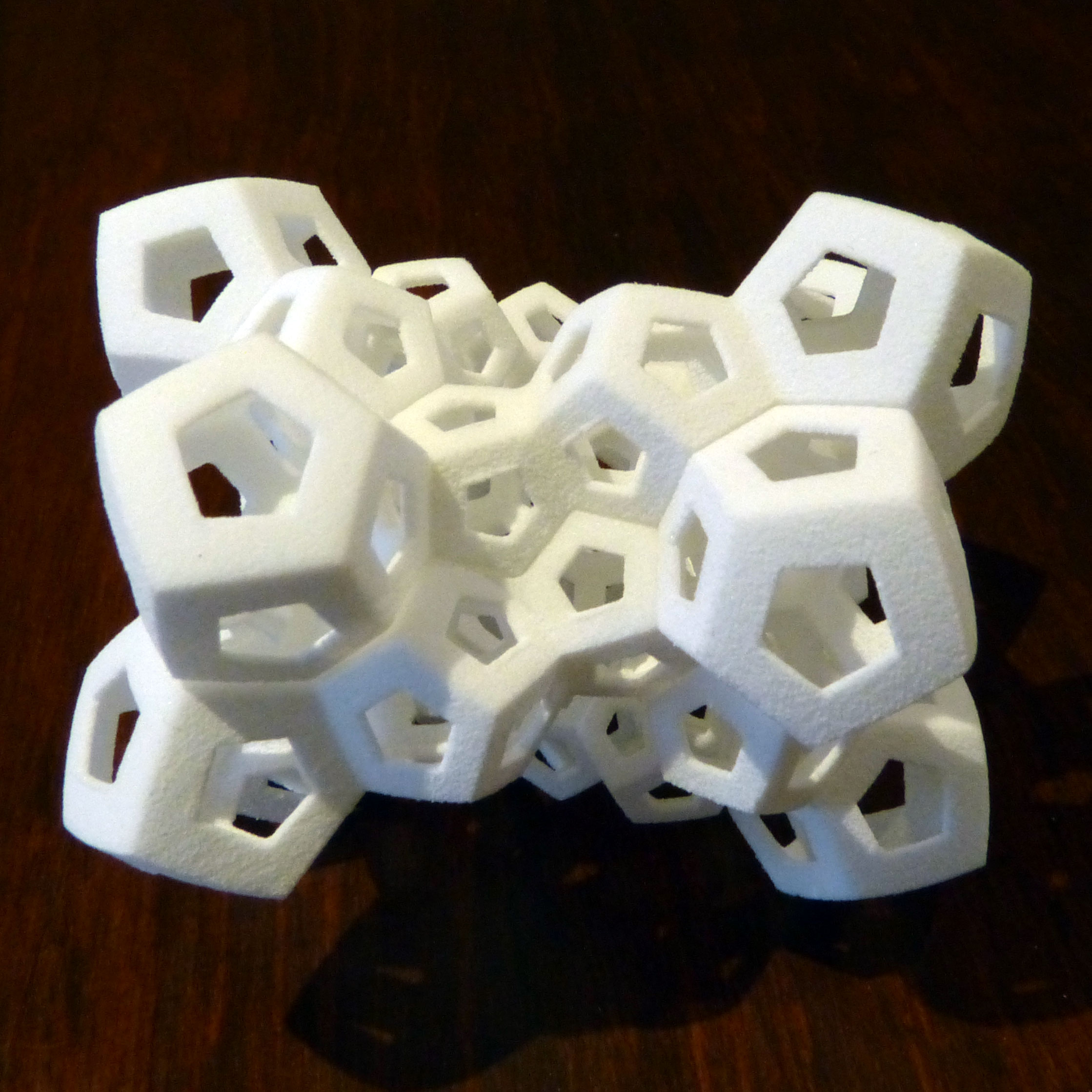}
}\\
 \cline{1-1}
  $5\times \text{outer six}$ &\\
 \cline{1-1}
  \footnotesize{Add a spine and one}&\\
  \footnotesize{inner four to make the}&\\
  \footnotesize{Comet more rigid.}&\\
  \footnotesize{}&\\
  \footnotesize{}&\\
\end{tabular}

\newpage
\begin{tabular}{lc}

Dc36 Alien &\multirow{6}{*}{
\includegraphics[height=82pt]{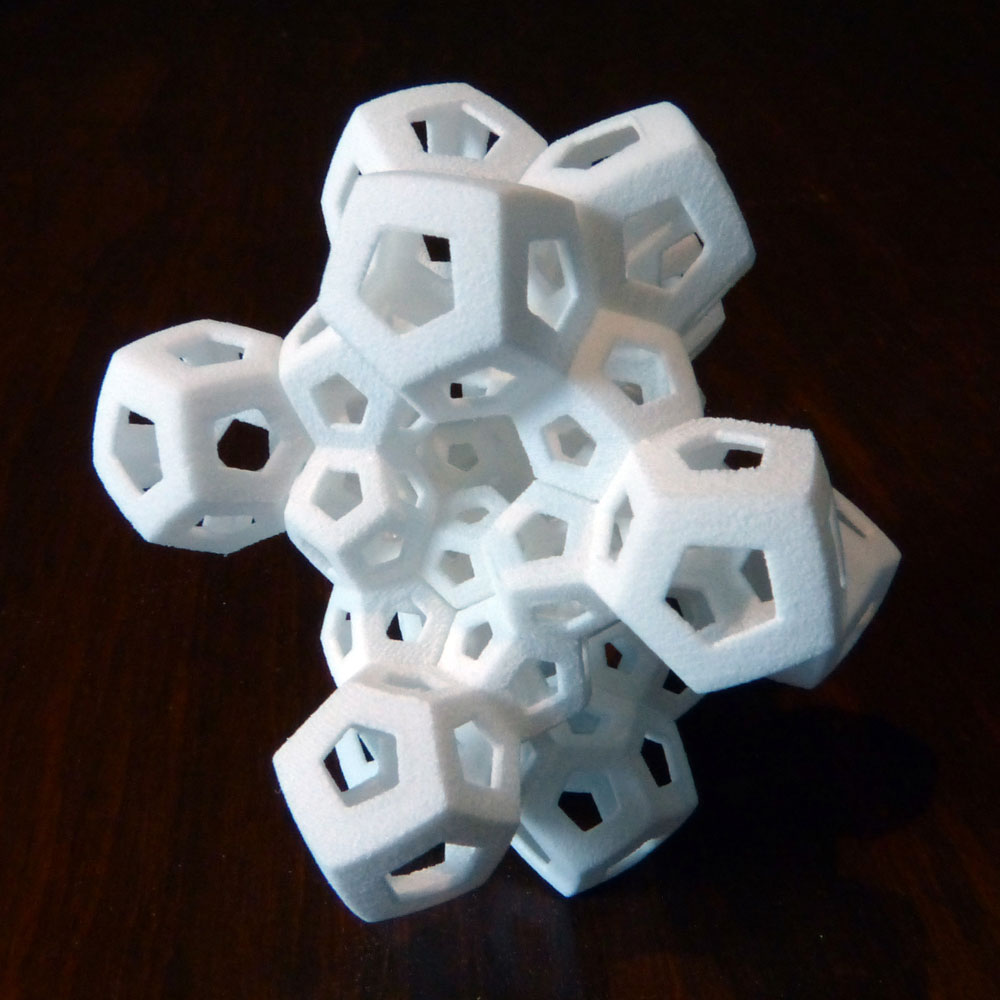}
\includegraphics[height=82pt]{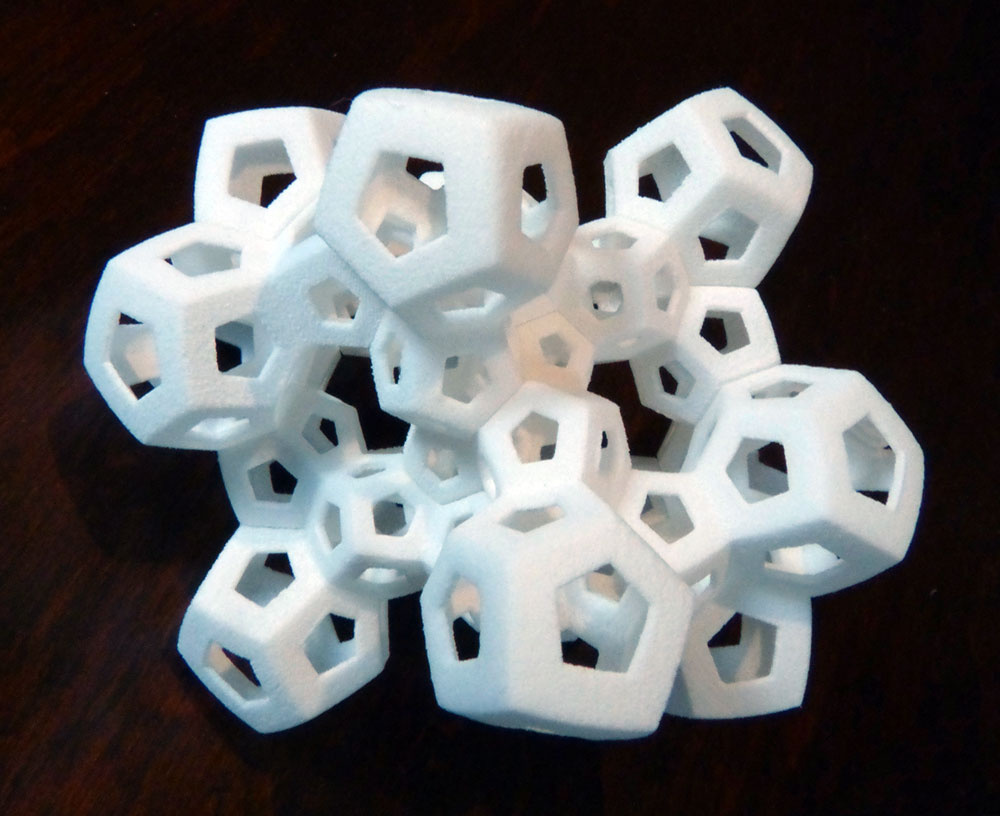}
\includegraphics[height=82pt]{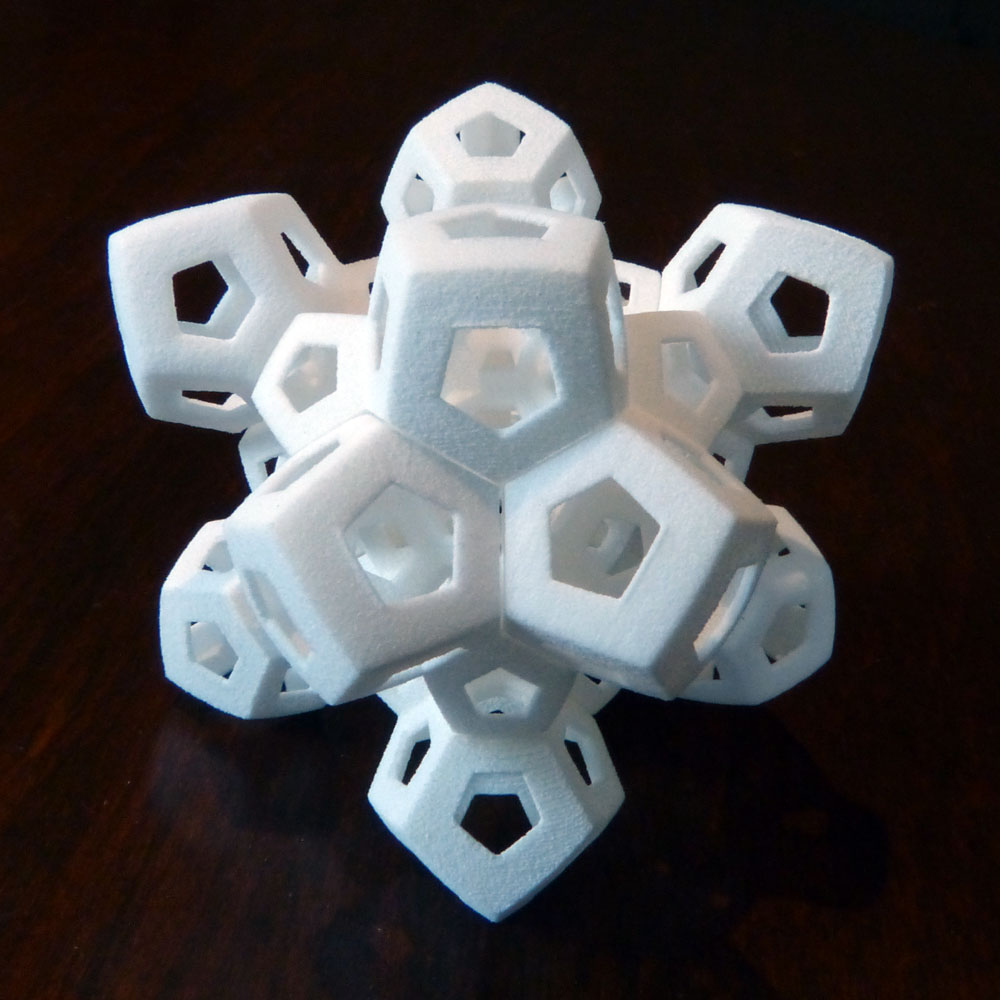}
}\\
 \cline{1-1}
  $3\times \text{inner six}$ &\\
  $3\times \text{outer six}$ &\\
 \cline{1-1}
  \footnotesize{Either set of 6s can}&\\
  \footnotesize{be replaced by 4s.}&\\
  \footnotesize{}&\\
  \footnotesize{}&\\

Dc36 Pulsar &\multirow{6}{*}{
\includegraphics[height=82pt]{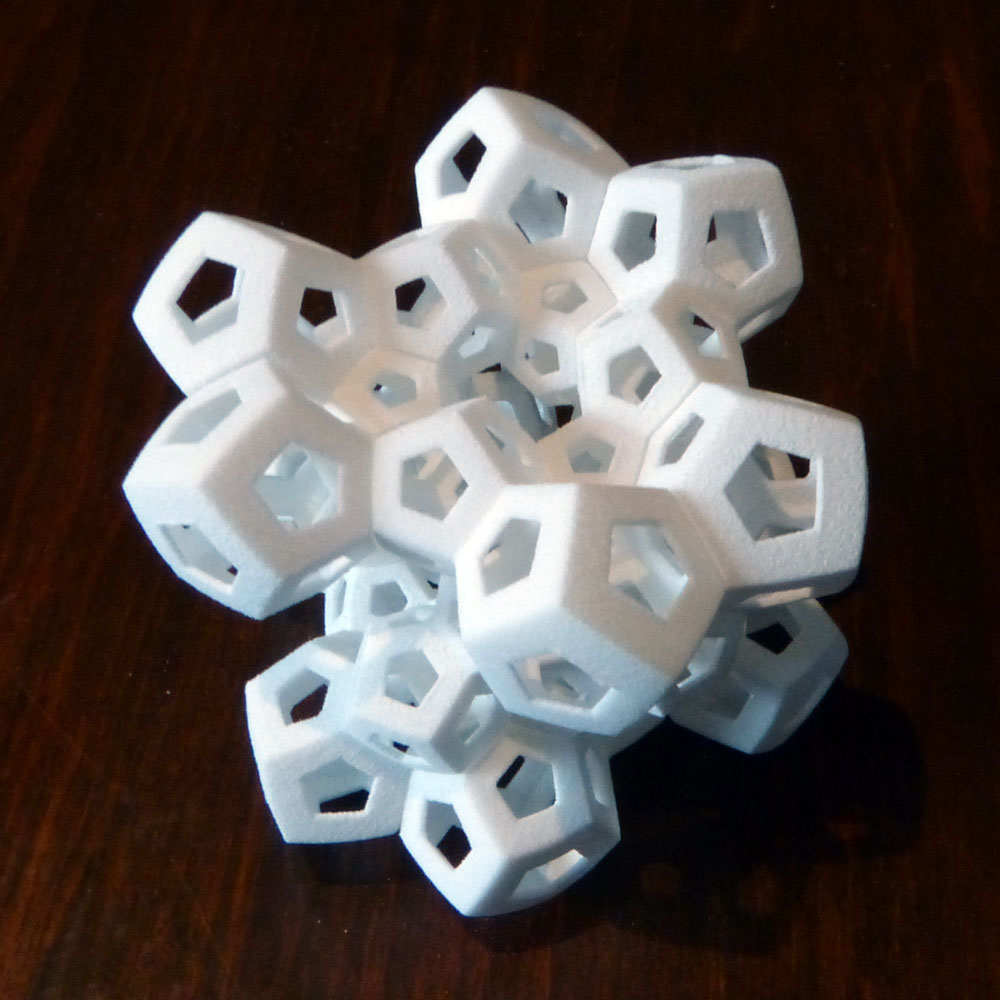}
\includegraphics[height=82pt]{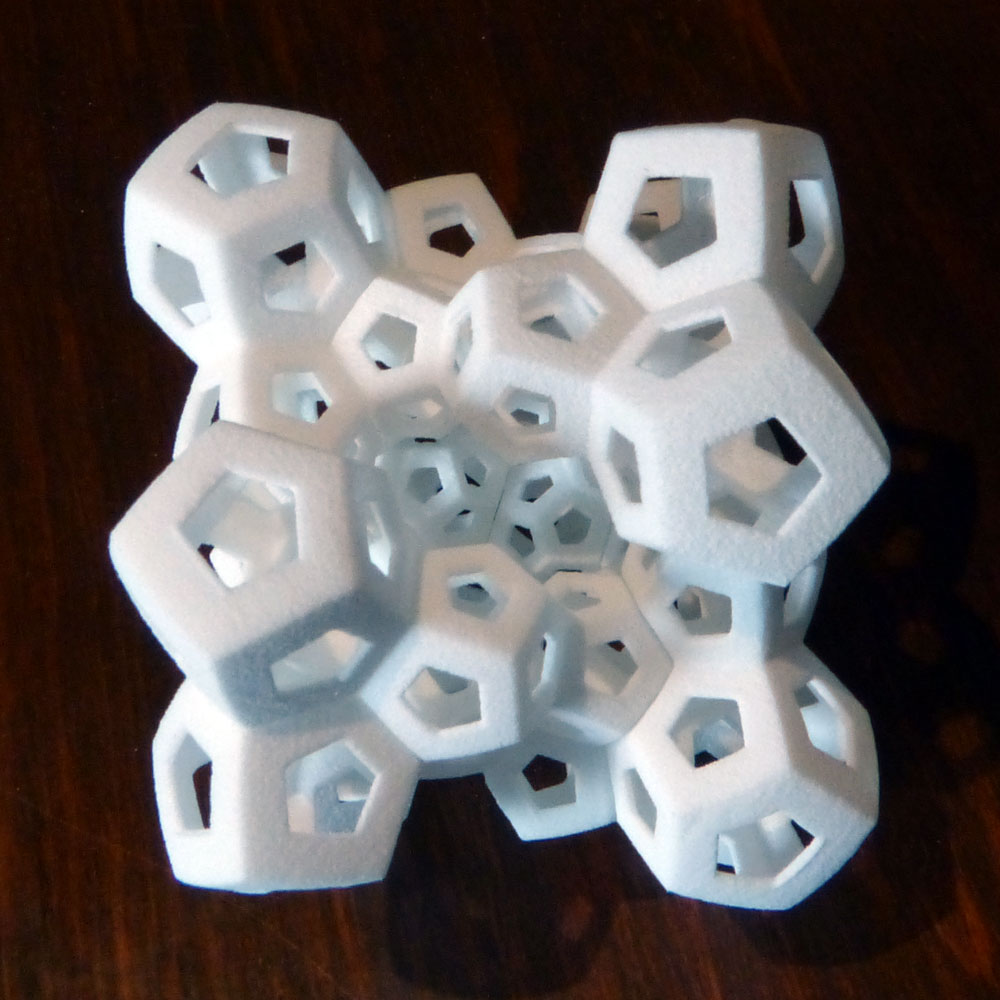}
\includegraphics[height=82pt]{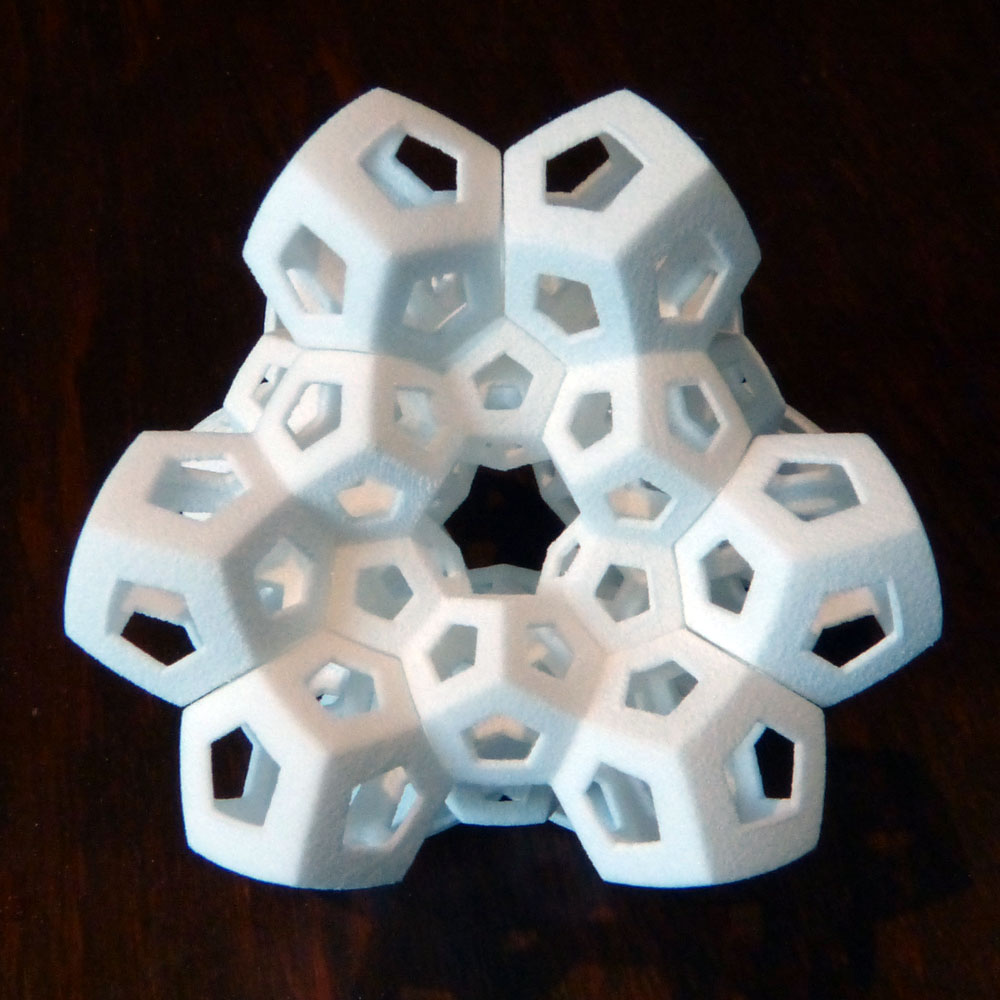}
}\\
 \cline{1-1}
  $6\times \text{outer six}$ &\\
 \cline{1-1}
  \footnotesize{Up to three ribs}&\\
  \footnotesize{can be replaced}&\\
  \footnotesize{by outer fours.}&\\
  \footnotesize{}&\\
  \footnotesize{}&\\

 Dc42 Alien &\multirow{6}{*}{
\includegraphics[height=82pt]{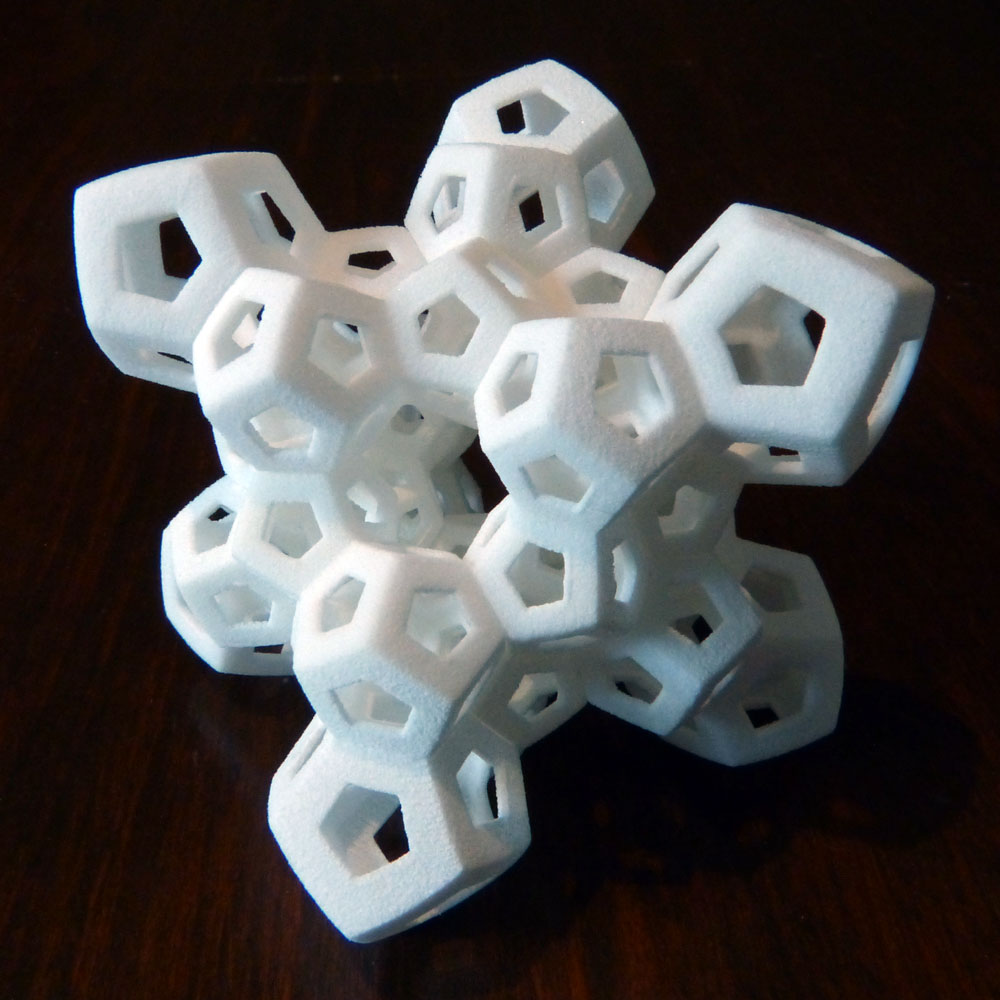}
\includegraphics[height=82pt]{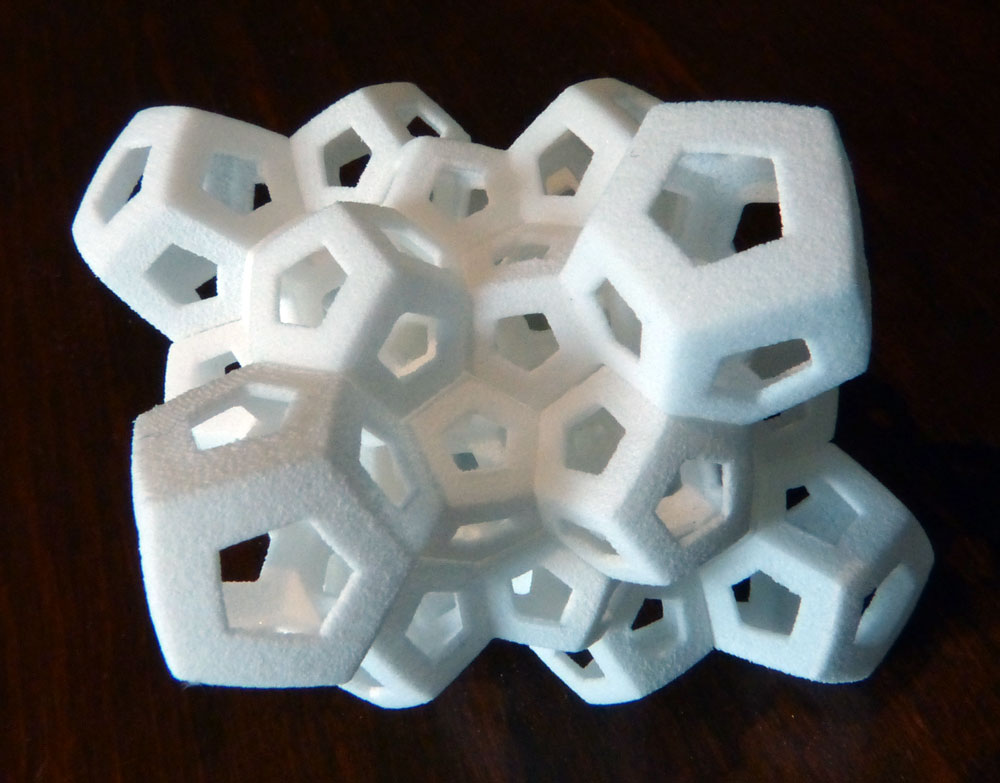}
\includegraphics[height=82pt]{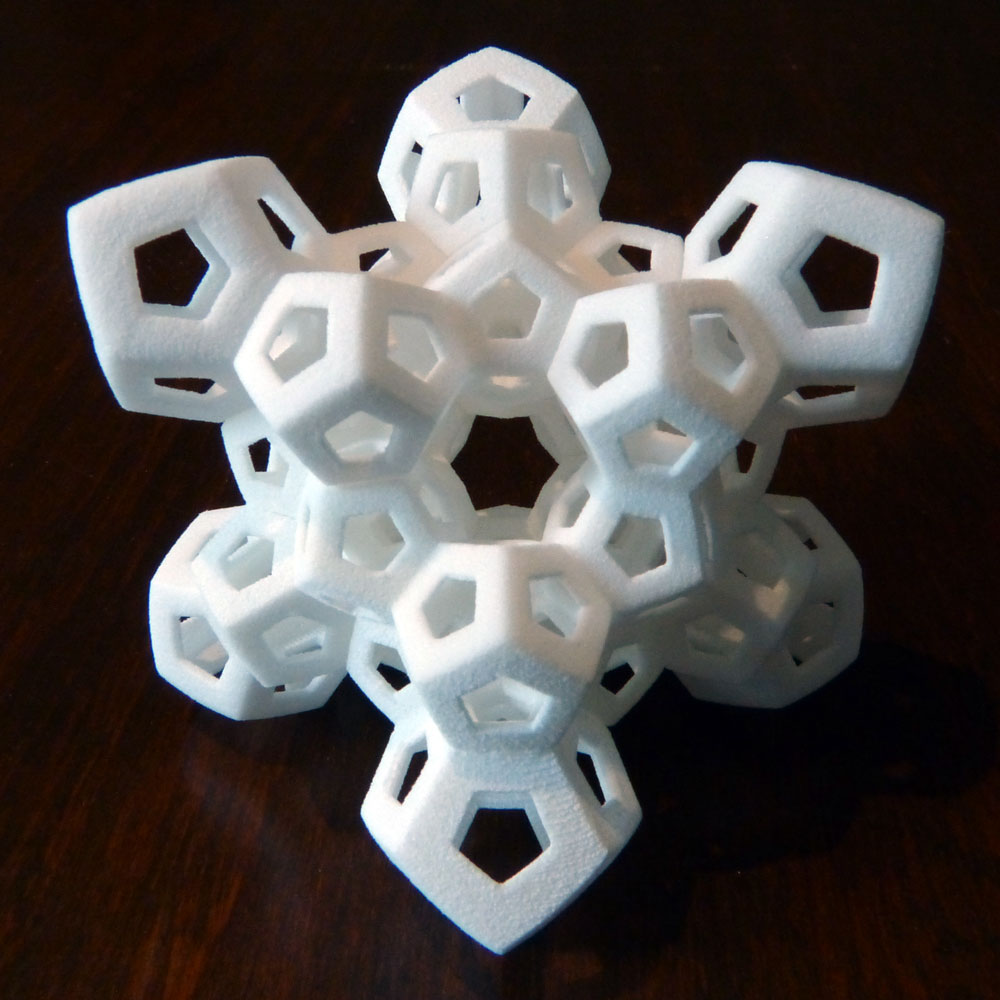}
}\\
 \cline{1-1}
  $6\times \text{outer four}$ &\\
  $3\times \text{inner six}$ &\\
  \footnotesize{}&\\
  \footnotesize{}&\\
  \footnotesize{}&\\
  \footnotesize{}&\\

  Dc45 Meteor & \multirow{6}{*}{
\includegraphics[height=82pt]{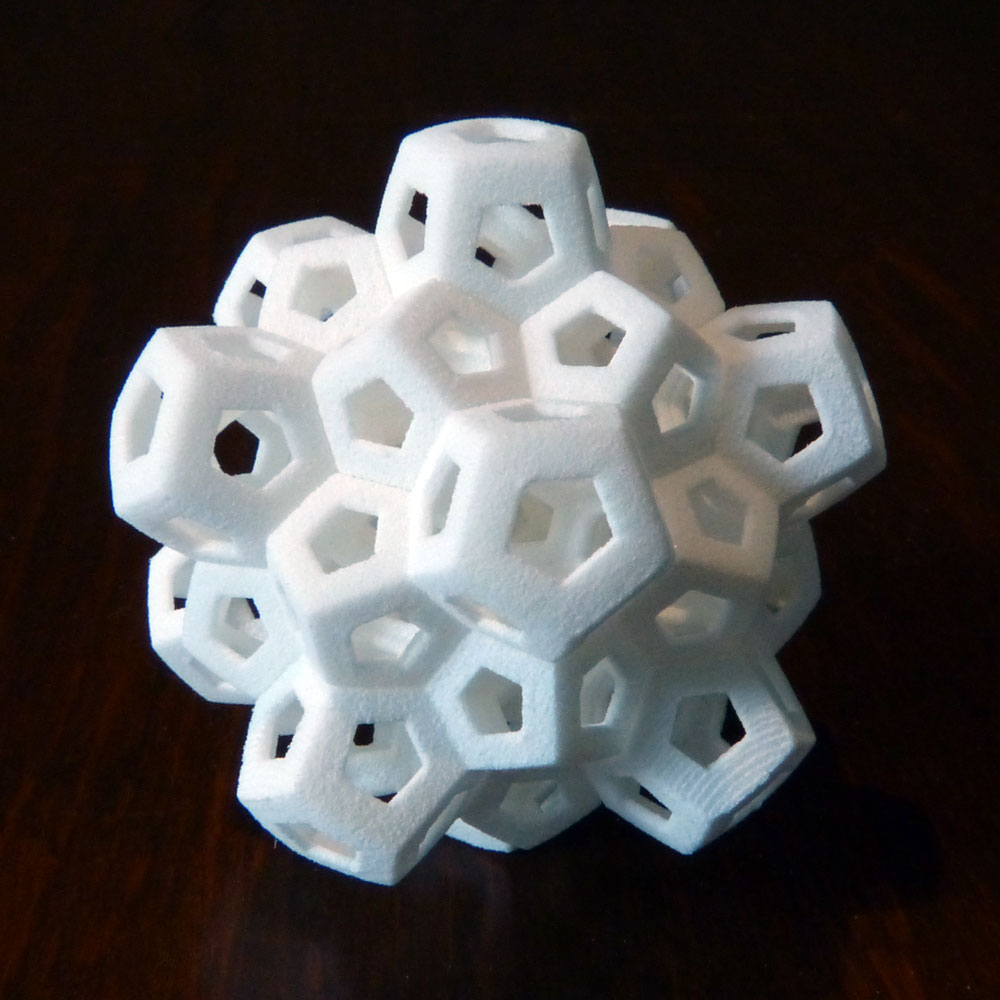}
\includegraphics[height=82pt]{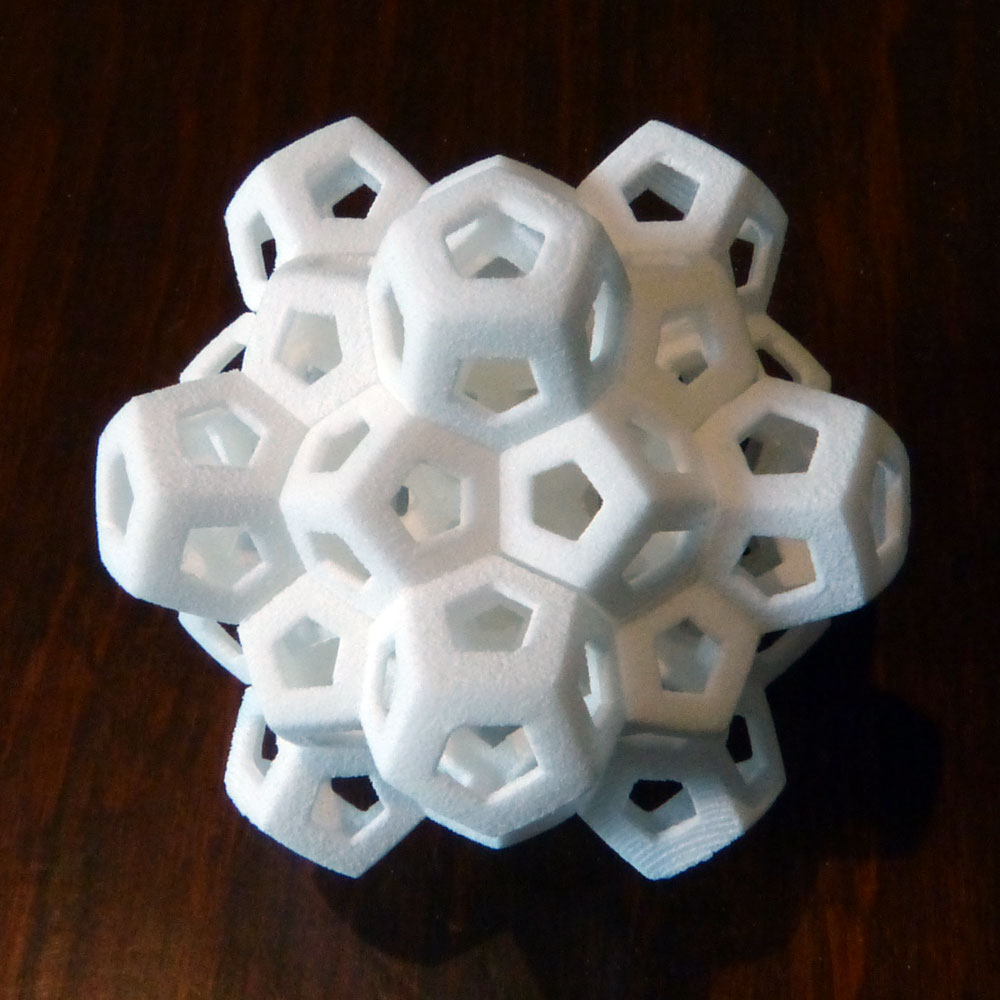}
\includegraphics[height=82pt]{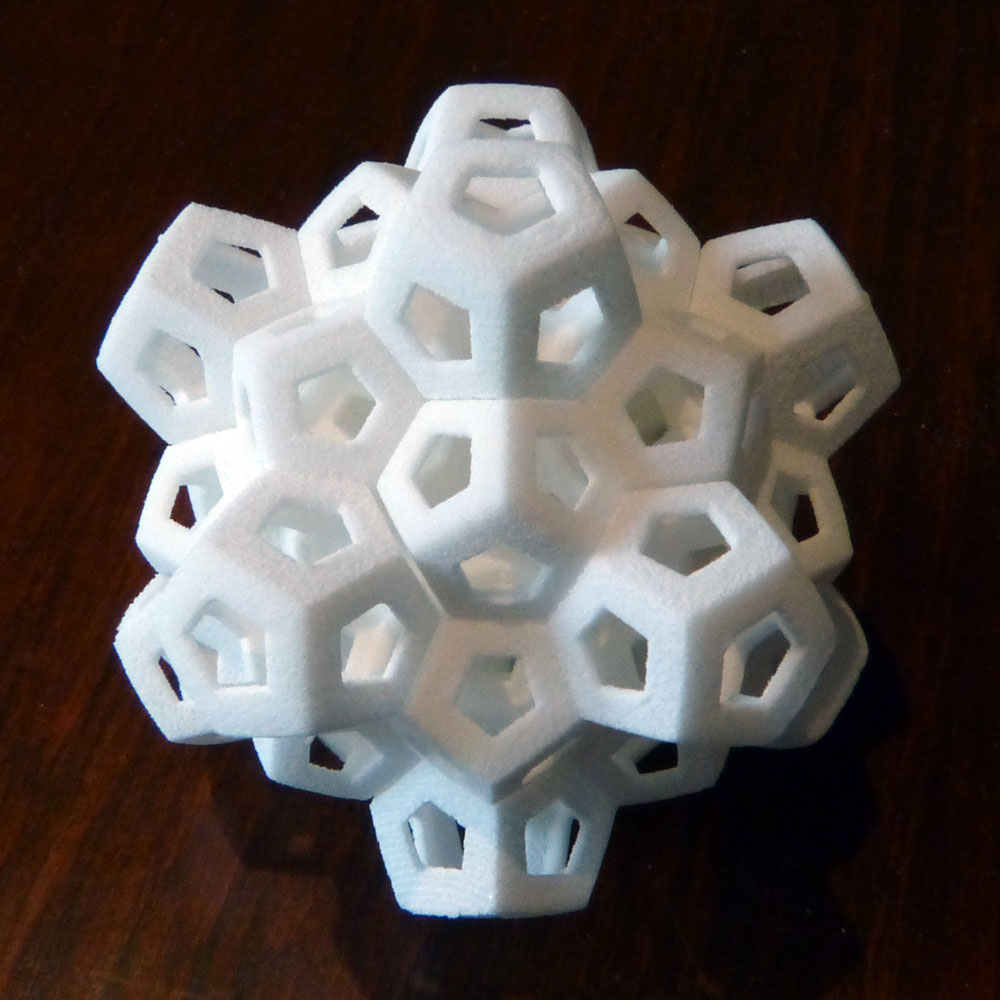}
\includegraphics[height=82pt]{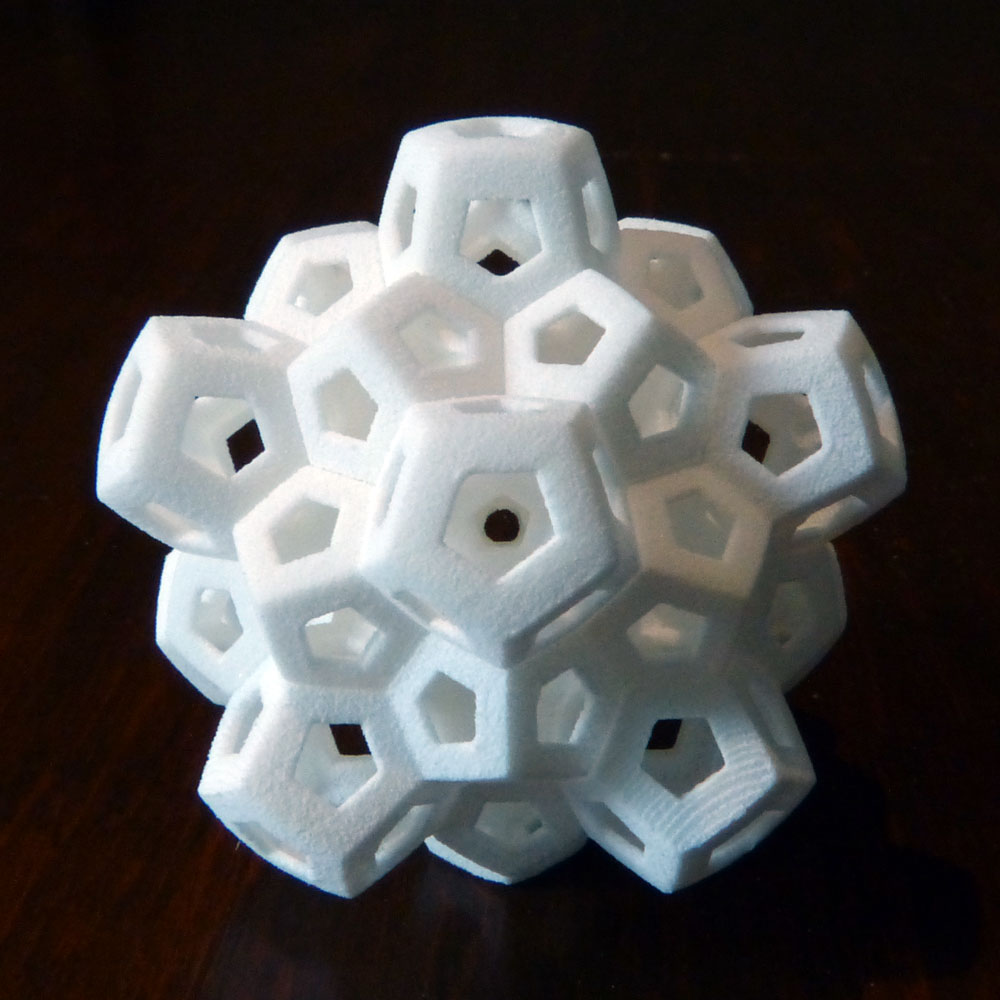}
}\\
 \cline{1-1}
  $5\times \text{inner four}$ &\\
  $5\times \text{outer four}$ &\\
  $1\times \text{spine}$ &\\
 \cline{1-1}
  \footnotesize{There are six ways}&\\
  \footnotesize{to build this.}&\\
  \footnotesize{}&\\

Dc50 Galaxy &\multirow{6}{*}{
\includegraphics[height=82pt]{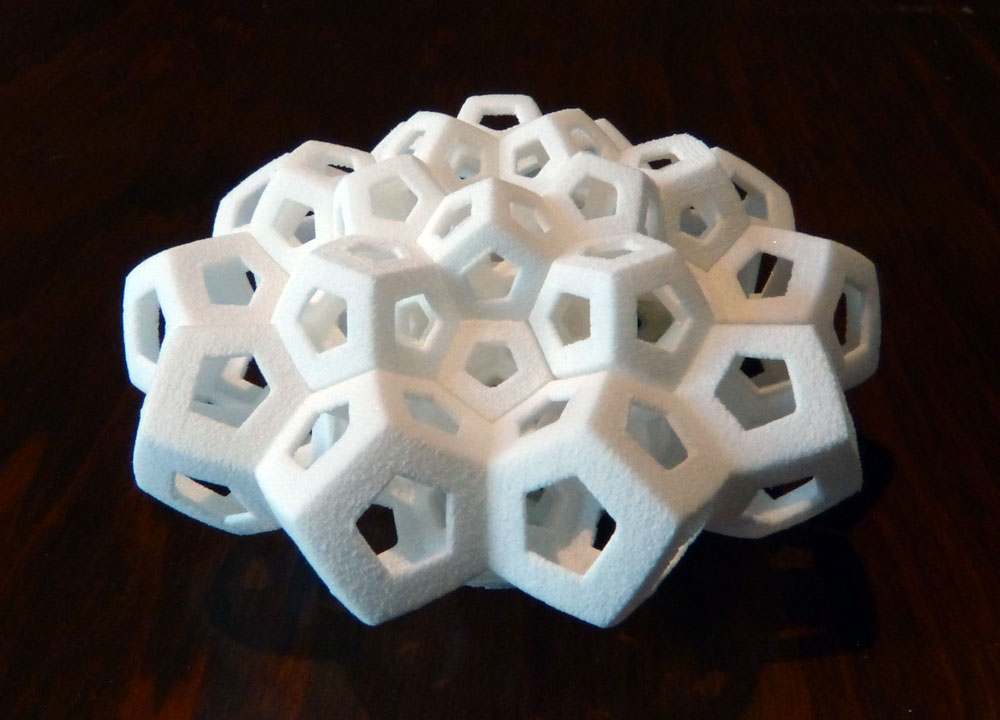}
\includegraphics[height=82pt]{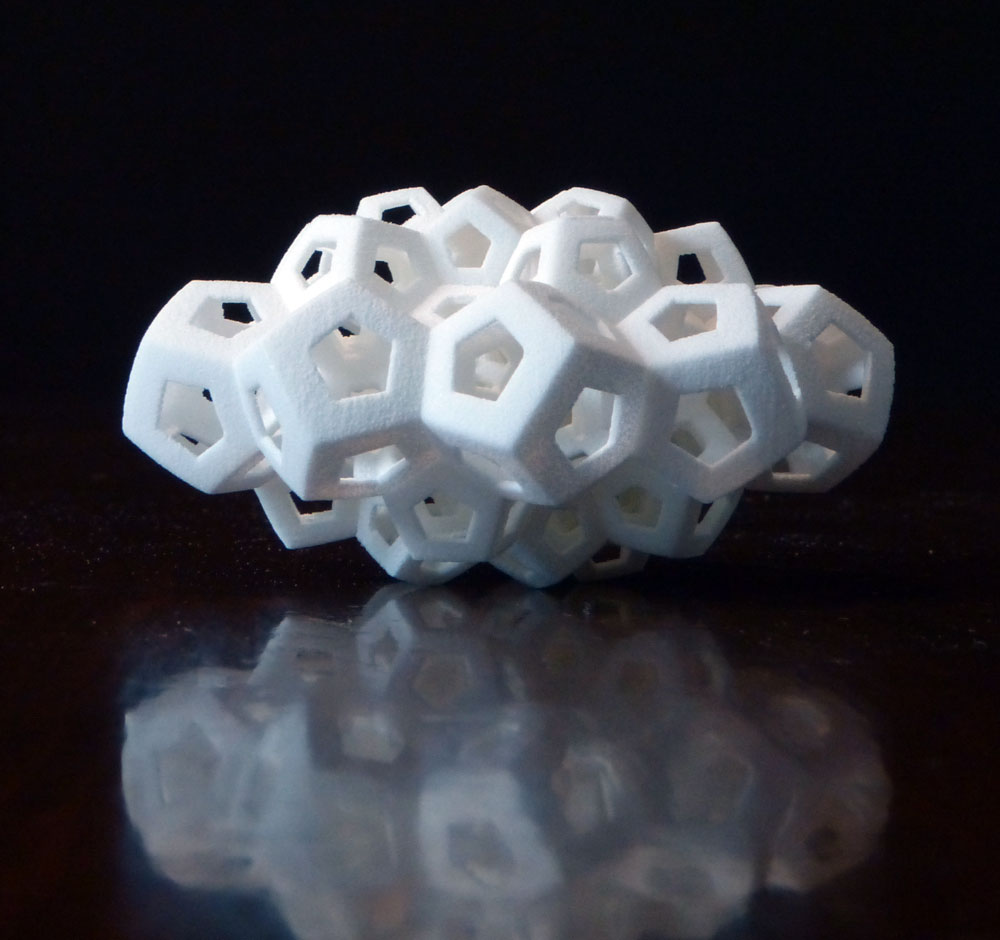}
\includegraphics[height=82pt]{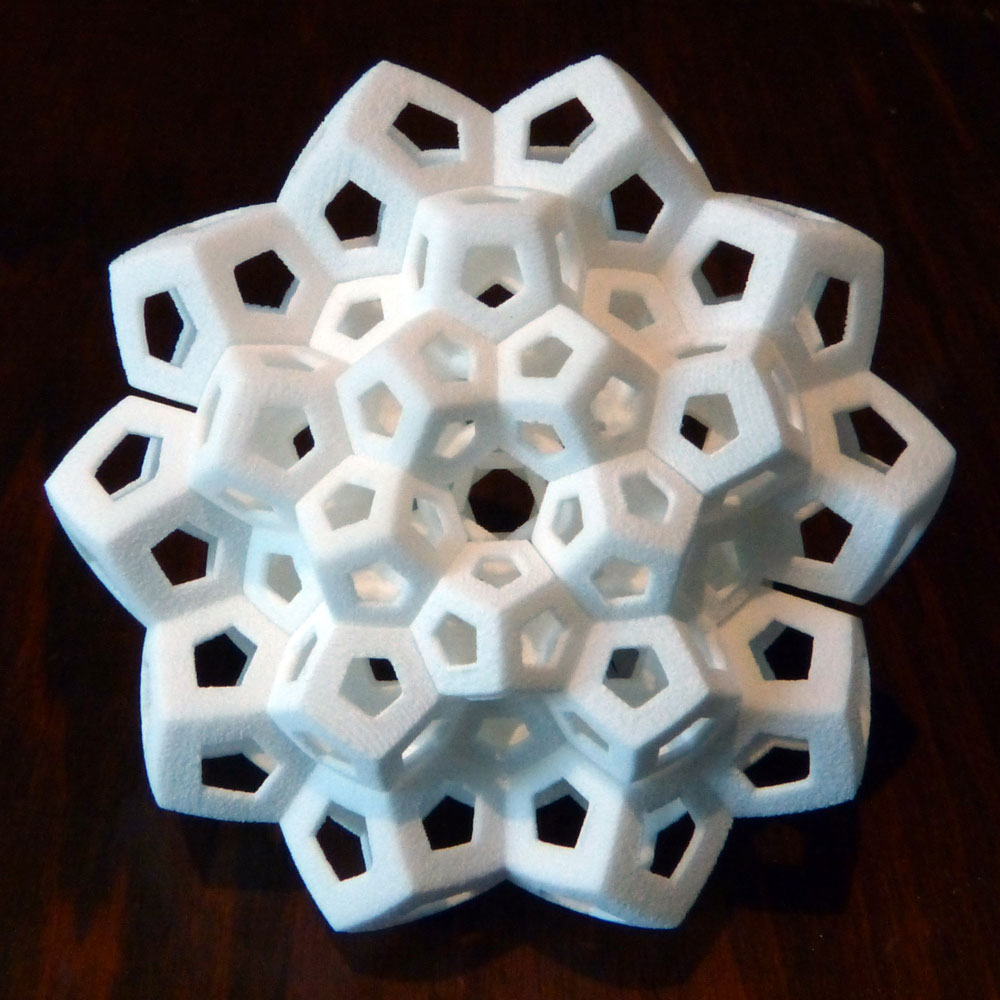}
}\\
 \cline{1-1}
  $5\times \text{inner four}$ &\\
  $5\times \text{outer four}$ &\\
  $2\times \text{equator}$&\\
  &\\
  &\\
  &\\

  Dc75 Meteor & \multirow{6}{*}{
\includegraphics[height=82pt]{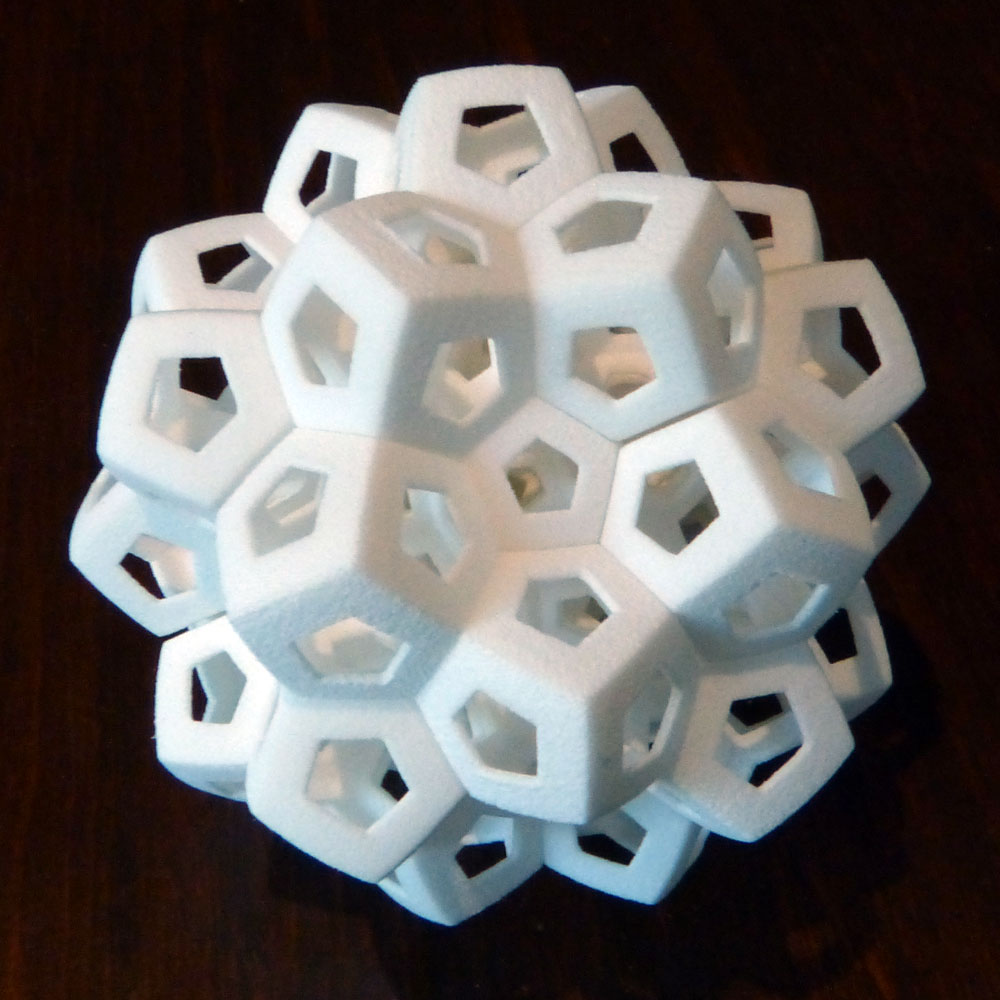}
\includegraphics[height=82pt]{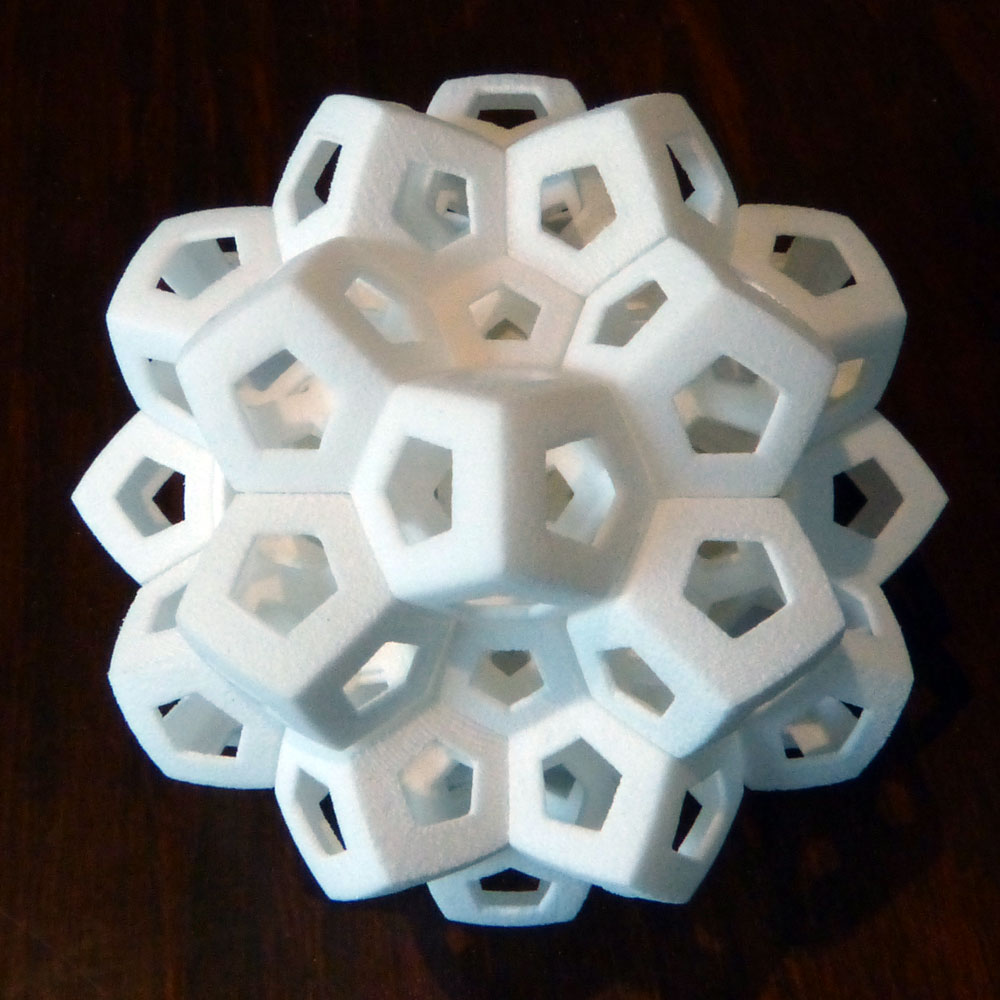}
\includegraphics[height=82pt]{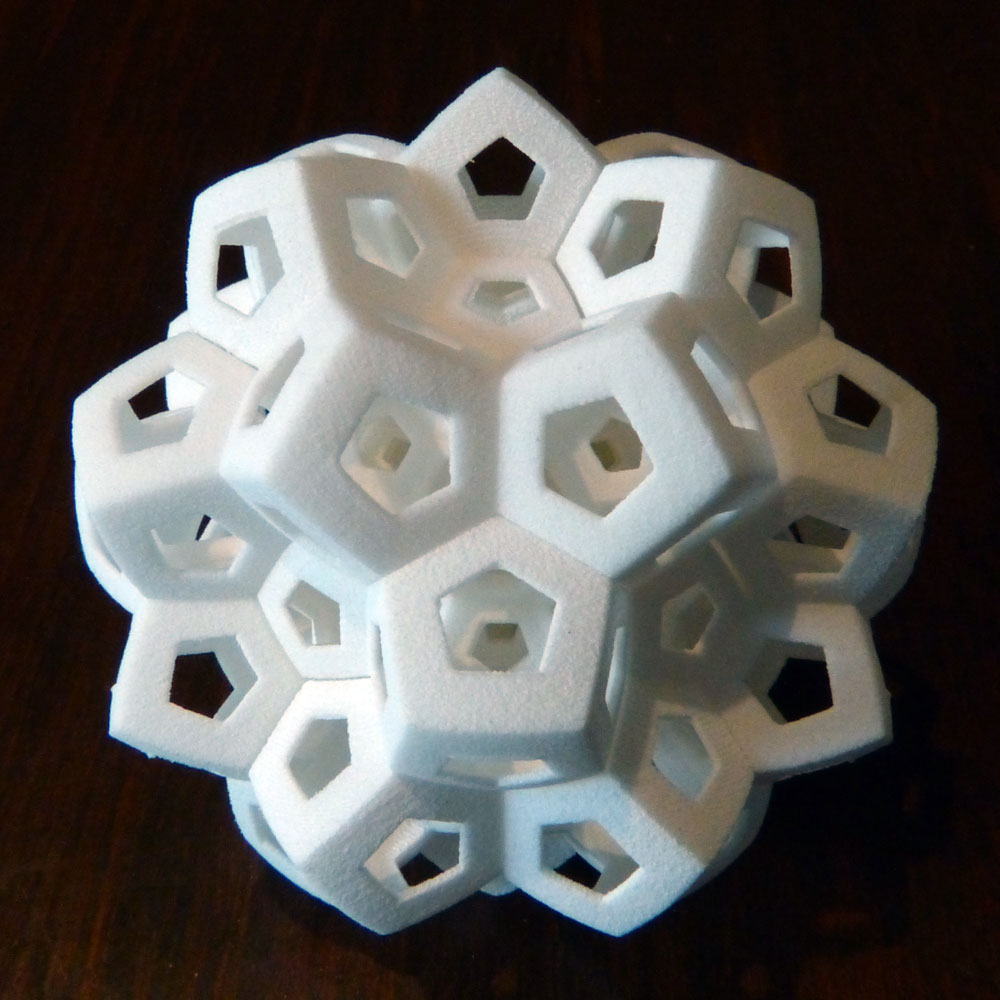}
\includegraphics[height=82pt]{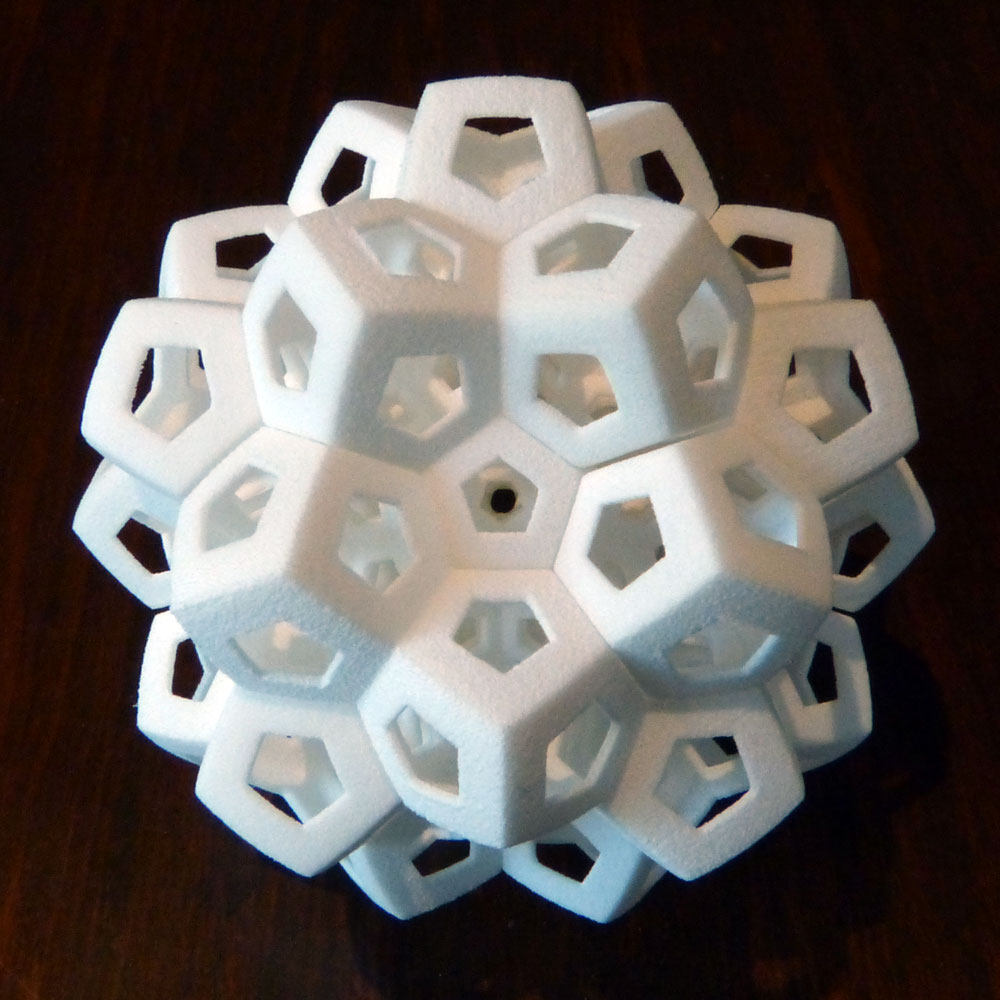}
}\\
 \cline{1-1}
  $5\times \text{inner six}$ &\\
  $5\times \text{outer six}$ &\\
  $1\times \text{spine}$ &\\
  $2\times \text{equator}$&\\
  &\\
\end{tabular}

\end{document}